\documentclass{article}

\usepackage{comment}

\usepackage[margin=1in]{geometry}
\usepackage[caption=false]{subfig}
\usepackage{graphicx}
\usepackage{url}
\usepackage{algorithm}
\usepackage{algpseudocode}
\usepackage{amsthm}
\newtheorem{theorem}{Theorem}

\newtheorem{lemma}{Lemma}
\newtheorem{proposition}{Proposition}
\newtheorem{corollary}{Corollary}
\newtheorem{assumption}{Assumption}
\newtheorem{remark}{Remark}

\usepackage{amsmath}
\usepackage{amssymb}

\newcommand{\plzcheck}[1]{#1}
\newcommand{\rev}[1]{#1}

\DeclareMathOperator{\sgn}{sgn}
\DeclareMathOperator{\Unif}{Unif}

\DeclareMathOperator{\R}{\mathbb{R}}
\newcommand{\ve}[1]{\mathbf{#1}}
\newcommand{\norm}[2]{\left\|#1\right\|_{#2}}
\newcommand{\inner}[2]{\left\langle #1, #2 \right\rangle}
\newcommand{\abs}[1]{\left|#1\right|}
\newcommand{\EE}{\mathbb{E}}
\newcommand{\parens}[1]{\left(#1 \right)}

\DeclareMathOperator{\trace}{trace}

\renewcommand{\sgn}{\text{sign}}
\newcommand{\eps}{\varepsilon}
\DeclareMathOperator{\Accept}{Accept}
\DeclareMathOperator{\quantile}{quantile}

\newcommand{\corrQone}[1]{Q_{#1}}
\newcommand{\corrQ}[2]{Q_{#1}(#2)}
\newcommand{\uncorrQ}[2]{\tilde{Q}_{#1}(#2)}
\newcommand{\uncorrQone}[1]{\tilde{Q}_{#1}}
\newcommand{\mean}[1]{M(#1)}
\newcommand{\subcorrQ}[3]{Q_{#1}(#2,#3)}
\newcommand{\subuncorrQ}[3]{\tilde{Q}_{#1}(#2,#3)}
\newcommand{\densconst}{D}
\newcommand{\subgaussconst}{K}
\makeatletter
\renewcommand*{\ALG@name}{Method}
\makeatother

\mathchardef\mhyphen="2D

\title{Quantile-based Iterative Methods for Corrupted Systems of Linear Equations\thanks{The authors were partially supported by NSF CAREER $\#1348721$ and NSF BIGDATA $\#1740325$. We also acknowledge sponsorship by Capital FundManagement.}}

\author{Jamie Haddock\thanks{Department of Mathematics, University of California, Los Angeles, Los Angeles, CA}
\and Deanna Needell\footnotemark[2]
\and Elizaveta Rebrova\footnotemark[2]
\and William Swartworth\footnotemark[2]}

\date{}
\begin{document}


\maketitle
\begin{abstract}
Often in applications ranging from medical imaging and sensor networks to error correction and data science (and beyond), one needs to solve large-scale linear systems in which a fraction of the measurements have been corrupted. We consider solving such large-scale systems of linear equations $\ve{A}\ve{x}=\ve{b}$ that are inconsistent due to corruptions in the measurement vector $\ve{b}$. We develop several variants of iterative methods that converge to the solution of the uncorrupted system of equations, even in the presence of large corruptions.  These methods make use of a quantile of the absolute values of the residual vector in determining the iterate update.  We present both theoretical and empirical results that demonstrate the promise of these iterative approaches.   
\end{abstract}



\section{Introduction}

One of the most ubiquitous problems arising across the sciences is that of solving large-scale systems of linear equations. These problems arise in many areas of data science including machine learning,  as subroutines of several optimization methods \cite{boyd2004convex},  medical imaging \cite{Gordon1970,herman1993algebraic}, sensor networks \cite{savvides2001dynamic}, statistical analysis, and many more.  A practical challenge in all of these settings is that there is almost always corruption present in any such large scale data, either due to data collection, transmission, adversarial components, or modern storage systems that can introduce corruptions into otherwise consistent systems of equations.  For such applications, we seek methods that are robust to such corruption but scalable to big data.

In this work, we develop scalable methods for solving corrupted systems of linear equations.  Here, we consider the problem of solving large scale systems of equations $\ve{A} \ve{x} = \tilde{\ve{b}}$ where a subset of equations have been contaminated with arbitrarily large corruptions in the measurement vector, thereby constructing an inconsistent system of equations defined by measurement matrix $\ve{A}$ and observed measurement vector $\ve{b} = \tilde{\ve{b}} + \ve{b}_C$ ($\tilde{\ve{b}}$ being unobserved but corresponding to the desired system of equations and $\ve{b}_C$ being an arbitrary corruption vector of the same dimension).  Our work is motivated by the setting where the uncorrupted system of equations $\ve{A} \ve{x} = \tilde{\ve{b}}$ is highly overdetermined and the number of measurements is very large.

We focus on variants of the popular iterative methods, \emph{stochastic gradient descent} (SGD) or \emph{randomized Kacmarz} (RK), that have gained popularity recently due to their small memory footprint and good theoretical guarantees \cite{strohmer2009randomized,bottou2010large,needell2016stochastic}.  We propose variants of both RK and SGD based upon use of \emph{quantile statistics}.  We focus on proving theoretical convergence guarantees for these variants, but additionally discuss their implementation, and present numerical experiments evidencing their promise.

The SGD method is a widely-used first-order iterative method for convex optimization \cite{robbins1951stochastic}.  The classical method seeks to minimize a separable objective function $f(\ve{x}) = \sum_{i=1}^{m} f_i(\ve{x})$ by accessing (stochastically) selected components of the objective and using a gradient step for this component.  That is, SGD constructs iterates $\ve{x}_k$ given by 
\begin{equation}\label{eq:SGD}
\ve{x}_{k+1} = \ve{x}_k - \gamma_k \nabla f_i(\ve{x}_k)
\end{equation}
where $\gamma_k$ is the learning rate (or step-size) and $i$ is the selected component for the $k$th iteration.
When the objective function $f(\ve{x})$ represents error in the solution of a system of equations, SGD generally updates in the direction of the sampled row, i.e., $\ve{x}_{k+1} - \ve{x}_k = \alpha_k \ve{a}_i$ for some $\alpha_k$ which depends upon the iterate $\ve{x}_k$.  Our variants apply SGD to the \emph{least absolute deviations (LAD)} error and \emph{least squares (LS)} error, $$f(\ve{x}) = \|\ve{A} \ve{x} - \ve{b}\|_1 = \sum_{i=1}^m \left|\inner{\ve{a}_i}{\ve{x}} - b_i\right| \quad \text{ and } \quad f(\ve{x}) = \frac{1}{2}\|\ve{A} \ve{x} - \ve{b}\|^2 = \frac{1}{2}\sum_{i=1}^m \left(\inner{\ve{a}_i}{\ve{x}} - b_i\right)^2,$$ respectively.  For these objectives, the SGD updates \eqref{eq:SGD} take the form 
$$
\ve{x}_{k+1} = \ve{x}_k - \gamma_k \text{sign}(\inner{\ve{a}_i}{\ve{x}_k} - b_i)\ve{a}_i
\quad \text{ and }\quad  \ve{x}_{k+1} = \ve{x}_k - \gamma_k (\inner{\ve{a}_i}{\ve{x}_k} - b_i)\ve{a}_i,$$ respectively, where $\text{sign}(\cdot)$ denotes the function that returns $1$ if its argument is positive and $-1$ otherwise. The RK updates are a specific instance of the SGD updates for the LS error where $\gamma_k = 1/\|\ve{a}_i\|^2$ \cite{needell2016stochastic}; that is 
\begin{equation}
\ve{x}_{k+1} = \ve{x}_k + \frac{b_i - \inner{\ve{a}_i}{\ve{x}_k}}{\|\ve{a}_i\|^2} \ve{a}_i.  \label{eq:RKupdate}
\end{equation}
In \cite{strohmer2009randomized}, the authors showed that when applied to a consistent system of equations with a unique solution $\ve{x}^*$ and with a specific sampling distribution, RK converges at least linearly in expectation. Indeed, denoting $\ve{e}_k := \ve{x}_k - \ve{x}^*$ as the difference between the $k$-th iterate of the method and the exact solution of the system, the method guarantees 
\begin{equation}
\mathbb{E}\|\ve{e}_k\|^2 \le \left(1 - \frac{\sigma^2_{\min}(\ve{A})}{\|\ve{A}\|_F^2}\right)^k \|\ve{e}_0\|^2, \label{eq:RKconvrate}
\end{equation}
where $\|\cdot\|_F$ denotes the Frobenius norm and $\sigma_{\min}(\ve{A})$ the smallest (nonzero) singular value of $\ve{A}$.
Standard SGD results (e.g., \cite{shamir2013stochastic}) provide similar convergence rates for SGD on these objectives when the stepsizes are chosen according to an appropriately decreasing schedule. See Section \ref{sec:RW} below for more details and a discussion of related work.


Here, we consider variants of the SGD and RK methods that converge to the solution of the uncorrupted system even in the presence of large corruptions in the measurement vector $\ve{b}$.  We prove convergence rates in the same form as \eqref{eq:RKconvrate}. It is worth noting that both our experimental and theoretical results illustrate that the size of the corruptions do not negatively impact the convergence of the proposed methods. Our methods will make use of SGD and RK steps but will use a quantile of the residual entries in order to determine the step-size. 

\subsection{Organization}
The rest of our paper is organized as follows.  In the remainder of the introduction, we present our main contributions in Section~\ref{sec:contributions}, discuss related works in Section~\ref{sec:RW}, and briefly describe our notations and give required definitions in Section \ref{sec:notation}. We then provide the detailed pseudocode of our proposed methods, QuantileRK$(q)$ and QuantileSGD$(q)$, in Section~\ref{sec:pseudocode}.  We state and prove our theoretical results in Section~\ref{sec:theor_results}.  Within this section, we highlight some new results for random matrices as useful tools 
in Subsection~\ref{subsec:foundations} and then include the proofs of our main convergence results in Subsections~\ref{subsec:QuantileRK} and \ref{subsec:QuantileSGD}. In Section~\ref{sec:imp}, we discuss several implementation considerations that affect the efficiency and convergence of our proposed methods.  In Section~\ref{sec:experiment}, we empirically demonstrate the promise of our methods with experiments on synthetic and real data.   Finally, we conclude and offer some future directions in Section~\ref{sec:conclusion}.

\subsection{Contributions}\label{sec:contributions} 
In this section, we provide summaries of foundational results we prove in high-dimensional probability, then state our main convergence results for the proposed methods.  
 Our main convergence results rely on the following assumptions about the linear system $\ve{A}\ve{x} = \ve{b}$. Let $\ve{A} \in \mathbb{R}^{m\times n}$ be a random matrix with $m\geq n$. We direct the reader to~\cite{HDP} for the random matrix theory definitions involved; we also provide summaries in Section~\ref{sec:notation}.
\begin{assumption}
\label{incoherent}
All the rows $\ve{a}_i$ of the matrix $\ve{A}$ have unit norm and are independent. Additionally, for all $i\in[m],$ $\sqrt{n}\ve{a}_i$ is mean zero isotropic and has uniformly bounded subgaussian norm $\norm{\sqrt{n}\ve{a}_i}{\psi_2} \leq \subgaussconst.$
\end{assumption} 

\begin{assumption}
\label{bounded_density}
Each entry $a_{ij}$ of $\ve{A}$ has probability density function $\phi_{ij}$ which satisfies $\phi_{ij}(t) \leq \densconst \sqrt{n}$ for all $t\in \R.$  (The quantity $\densconst$ is a constant which we will use throughout when referring to this assumption.)
\end{assumption}

The prototypical example of a matrix satisfying both assumptions is a normalized Gaussian matrix, i.e., a matrix whose rows are sampled uniformly over $S^{n-1}.$ Assumptions \ref{incoherent} and \ref{bounded_density} extract the properties of Gaussian matrices that are required for our theory. As such, our work applies to more general distributions, whenever there is enough independence between the entries of the matrix and their distributions are regular enough.

By Assumptions \ref{incoherent} and \ref{bounded_density}, the matrices we consider will be full rank almost surely so the uncorrupted system \plzcheck{$\ve{A}\ve{x} = \tilde{\ve{b}}$} will always have a unique solution $\ve{x}^*.$ 

\subsubsection{High-dimensional probability results} 
%
Our main convergence guarantees build upon several useful results related to the non-asymptotic properties of random matrices that appear to be new and that may be of independent interest.

In particular, Proposition \ref{sub_conditioning} shows that for a class of random matrices, \emph{all} large enough submatrices 
uniformly have smallest singular values that are at least on the order of $\sqrt{m/n}$.  For matrices which satify Assumptions \ref{incoherent} and \ref{bounded_density}, this generalizes standard bounds on the smallest singular value, but does not follow directly from these bounds.

Proposition \ref{med_bound} is more specialized, but may also be of independent interest.  For a random matrix $\ve{A}$, we show that the $q$-quantile of $\{\abs{\inner{\ve{a}_i}{\ve{x}}}\}$ is well concentrated \textit{uniformly} in $\ve{x}$.  Perhaps surprisingly, $\ve{A}$ does not need to be very tall for this result to hold; a constant aspect ratio suffices.

\subsubsection{Main results}
We first introduce two new methods for iteratively solving linear systems with corruptions and give the formal statements of our main results. 
%
%
The first method we introduce is \textbf{QuantileRK}, which builds upon the RK method. Recall that the iteration of RK given by \eqref{eq:RKupdate} implies that the method proceeds by sampling rows of the matrix $\ve{A}$ and projecting onto the corresponding hyperplane given by the linear constraint.
When some of the entries in $\ve{b}$ are corrupted by a large amount, RK periodically projects onto the associated corrupted hyperplanes and therefore does not converge.  Our solution is to avoid making projections that result in $\norm{\ve{x}_{k+1}-\ve{x}_k}{}$ being abnormally large.
Specifically, for each iterate $\ve{x}_k$ we consider the set of distances from $\ve{x}_k$ to a set of \plzcheck{$t$ sampled} hyperplane constraints.\footnote{\label{subset}In order to allow more efficient implementations, we empirically show that considering a small subset of hyperplanes is sufficient. One could extend the theory to this setting as well, with a slightly more complicated analysis.} We assign a threshold value to be the $q$-quantile of these distances, where $q$ is a parameter of the method. If the distance from $\ve{x}_k$ to the sampled hyperplane is greater than this threshold then the method avoids projecting during that iteration.  Otherwise it projects in the same manner as RK.

Theorem \ref{median_convergence} states that the QuantileRK method converges for random matrices satisfying Assumptions \ref{incoherent} and \ref{bounded_density} above, as long as the fraction of corrupted entries is a sufficiently small constant (which does not depend on the dimensions of the matrix).  
Here and throughout, $c,C,c_1,C_1,\ldots$ denote absolute constants that may denote different values from one use to the next. Variable subscripts on constants will indicate quantities that a given constant may depend on.

\begin{theorem}
\label{median_convergence}
Let the system be defined by random matrix $\ve{A} \in \mathbb{R}^{m \times n}$ satisfying Assumptions \ref{incoherent} and \ref{bounded_density}, with the constant parameters $\densconst$ and $\subgaussconst$.\footnote{\label{note:constants}In other words we do not track the dependencies on $\densconst$ and $\subgaussconst$.} 
Then with probability $1- c \exp(-c_q m)$, the iterates produced by the QuantileRK$(q)$ Method \ref{QuantileRK} with $q \in (0,1)$, where in each iteration the quantile is computed using the full corrupted residual (instead of subsampling, we use $t=m$), and initialized with arbitrary point $\ve{x}_0 \in \R^n$ satisfy $$\EE\left(\norm{\ve{e}_{k}}{}^2\right) \leq \left(1-\frac{C_q}{n}\right)^k \norm{\ve{e}_{0}}{}^2$$ 
as long as the fraction of corrupted entries $\beta =|\textup{supp}(\ve{b}_C)|/m < \min (c_1 \rev{q^2}, 1 - q)$ and $m \ge C n$. (Recall that $\ve{e}_k$ denotes the error vector $\ve{x}_k - \ve{x}^*.$)
\end{theorem}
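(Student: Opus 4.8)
The plan is to establish a one-step contraction in expectation, $\EE[\norm{\ve{e}_{k+1}}{}^2\mid\ve{x}_k]\le(1-C_q/n)\norm{\ve{e}_k}{}^2$, that holds \emph{uniformly} over every possible iterate $\ve{x}_k\in\R^n$, conditionally on a single high-probability event about $\ve{A}$, and then to iterate it via the tower property. First I would fix the good event $\mathcal{E}$ on which: (i) for every $\ve{x}\in\R^n$ and every quantile level $p$ in the finite collection $\{q-\beta,\,q,\,q+\beta\}$ together with a few auxiliary levels, the $p$-quantile $\uncorrQ{p}{\ve{x}}$ of $\{\abs{\inner{\ve{a}_i}{\ve{x}}}\}_{i=1}^m$ satisfies $c_p\norm{\ve{x}}{}/\sqrt{n}\le\uncorrQ{p}{\ve{x}}\le C_p\norm{\ve{x}}{}/\sqrt{n}$ with $c_p>0$, which is Proposition~\ref{med_bound} (valid already for $m\ge Cn$) together with the anti-concentration guaranteed by Assumption~\ref{bounded_density}; (ii) for every $S\subseteq[m]$ with $\abs{S}\ge(1-\beta)m$ one has $\sigma_{\min}(\ve{A}_S)\ge c\sqrt{m/n}$, which is Proposition~\ref{sub_conditioning}; and (iii) the uniform tail bound $\abs{\{i:\abs{\inner{\ve{a}_i}{\ve{x}}}\ge t\norm{\ve{x}}{}/\sqrt{n}\}}\le e^{-ct^2}m$ for all $\ve{x}$ and all $t\ge 1$, a standard $\eps$-net argument from Assumption~\ref{incoherent}. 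A union bound gives $\PP(\mathcal{E})\ge 1-c\exp(-c_q m)$, and since all of these statements are $1$-homogeneous in $\ve{x}$, they apply verbatim to the (random, adaptively generated) iterates $\ve{x}_k$.

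On $\mathcal{E}$ I would then analyze a single step. Because the corrupted residual vector $\{\abs{\inner{\ve{a}_i}{\ve{x}_k}-b_i}\}_i$ differs from the clean one $\{\abs{\inner{\ve{a}_i}{\ve{e}_k}}\}_i$ in at most $\beta m=\abs{\textup{supp}(\ve{b}_C)}$ coordinates, the threshold $\alpha_k:=\corrQ{q}{\ve{x}_k}$ lies between the $(q-\beta)$- and $(q+\beta)$-quantiles of the clean residuals, $\uncorrQ{q-\beta}{\ve{e}_k}\le\alpha_k\le\uncorrQ{q+\beta}{\ve{e}_k}$; the upper side is a genuine (non-maximal) quantile since $\beta<1-q$, and the lower side is strictly positive since $\beta<c_1q^2<q$, so by (i) $\alpha_k\asymp_q\norm{\ve{e}_k}{}/\sqrt{n}$. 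Conditioning on the (uniformly) sampled row $i$: if $i$ is uncorrupted the step is an exact RK projection when it fires, giving $\norm{\ve{e}_{k+1}}{}^2=\norm{\ve{e}_k}{}^2-\inner{\ve{a}_i}{\ve{e}_k}^2$ whenever $\abs{\inner{\ve{a}_i}{\ve{e}_k}}\le\alpha_k$ and $\ve{e}_{k+1}=\ve{e}_k$ otherwise; if $i$ is corrupted and fires, writing $\delta_i:=(\ve{b}_C)_i-\inner{\ve{a}_i}{\ve{e}_k}$ so that $\abs{\delta_i}=\abs{\inner{\ve{a}_i}{\ve{x}_k}-b_i}\le\alpha_k$, one computes $\norm{\ve{e}_{k+1}}{}^2-\norm{\ve{e}_k}{}^2=2\delta_i\inner{\ve{a}_i}{\ve{e}_k}+\delta_i^2\le 2\alpha_k^2+\inner{\ve{a}_i}{\ve{e}_k}^2$. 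Averaging over $i$, with $G$ the firing uncorrupted rows and $B$ the firing corrupted rows ($\abs{B}\le\beta m$),
\[
\EE\bigl[\norm{\ve{e}_{k+1}}{}^2\mid\ve{x}_k\bigr]\le\norm{\ve{e}_k}{}^2-\frac1m\sum_{i\in G}\inner{\ve{a}_i}{\ve{e}_k}^2+\frac1m\sum_{i\in B}\bigl(2\alpha_k^2+\inner{\ve{a}_i}{\ve{e}_k}^2\bigr).
\]

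To finish I would bound the last two terms. For the progress term, the $q$-quantile definition forces $\abs{G}\ge(q-\beta)m$, and combining the lower bound $\alpha_k\ge\uncorrQ{q-\beta}{\ve{e}_k}$ from (i) with Proposition~\ref{sub_conditioning} and the anti-concentration/tail bounds in (i) and (iii) shows that $\frac1m\sum_{i\in G}\inner{\ve{a}_i}{\ve{e}_k}^2\ge C_q\norm{\ve{e}_k}{}^2/n$ for a constant $C_q>0$ (morally: a $q$-dependent positive fraction of the firing uncorrupted rows carry $\inner{\ve{a}_i}{\ve{e}_k}^2\gtrsim_q\norm{\ve{e}_k}{}^2/n$). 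For the damage term, $\alpha_k^2\lesssim_q\norm{\ve{e}_k}{}^2/n$ gives $\frac1m\sum_{i\in B}2\alpha_k^2\le2\beta\alpha_k^2\lesssim_q\beta\norm{\ve{e}_k}{}^2/n$, while $\sum_{i\in B}\inner{\ve{a}_i}{\ve{e}_k}^2$ is at most the sum of the $\beta m$ largest values of $\{\inner{\ve{a}_i}{\ve{e}_k}^2\}_{i=1}^m$, which by the uniform tail bound (iii) is $\lesssim_q\beta\,(m/n)\norm{\ve{e}_k}{}^2$; hence the whole damage term is at most $C_q'\beta\norm{\ve{e}_k}{}^2/n$. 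Therefore
\[
\EE\bigl[\norm{\ve{e}_{k+1}}{}^2\mid\ve{x}_k\bigr]\le\Bigl(1-\frac{C_q-C_q'\beta}{n}\Bigr)\norm{\ve{e}_k}{}^2,
\]
and choosing $c_1$ small enough that $\beta<c_1q^2$ forces $C_q'\beta\le C_q/2$ leaves a genuine contraction factor $1-C_q/(2n)$; iterating over $k$ (tower property, all on $\mathcal{E}$) and renaming $C_q$ yields $\EE\norm{\ve{e}_k}{}^2\le(1-C_q/n)^k\norm{\ve{e}_0}{}^2$.

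The main obstacle I expect is the last balancing step: one must track carefully how the quantile constants $c_p,C_p$ and each use of Propositions~\ref{sub_conditioning} and~\ref{med_bound} depend on $q$ (and handle the possible logarithmic factor lurking in the order-statistics estimate for the damage term), so that the admissible corruption fraction genuinely comes out as a constant multiple of $q^2$ rather than a worse power; here the role of $\beta<1-q$ is precisely to keep the relevant upper quantile away from the maximum clean residual. A secondary, more technical point is that every probabilistic estimate must be uniform in $\ve{x}$, so that the single event $\mathcal{E}$ governs the entire data-dependent iterate sequence --- which is exactly what the ``uniform in $\ve{x}$'' strength of Proposition~\ref{med_bound} and the ``all large submatrices'' strength of Proposition~\ref{sub_conditioning} are designed to supply.
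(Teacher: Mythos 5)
Your proposal follows essentially the same architecture as the paper's proof: a single high-probability event making all estimates uniform in $\ve{x}$, progress from the firing uncorrupted rows via the uniform submatrix singular-value bound of Proposition~\ref{sub_conditioning} (the firing uncorrupted set has size at least $\lfloor qm\rfloor-\beta m$ by pure counting, so the quantile \emph{lower} bound you list in (i) is never actually needed --- which is fortunate, since Lemma~\ref{quantile_lower} would cost you an $m\gtrsim n\log n$ aspect ratio), and damage from the firing corrupted rows controlled by the quantile upper bound $\alpha_k\lesssim_q\norm{\ve{e}_k}{}/\sqrt{n}$ from Proposition~\ref{med_bound}. The one substantive deviation is how you bound the corrupted-row cross term: you apply AM--GM to $2\delta_i\inner{\ve{a}_i}{\ve{e}_k}$ and are then forced to control $\frac1m\sum_{i\in B}\inner{\ve{a}_i}{\ve{e}_k}^2$ via the order statistics of the squared inner products, i.e.\ via your tail-count event (iii). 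The paper instead keeps the cross term as $2\abs{\delta_i}\,\abs{\inner{\ve{a}_i}{\ve{e}_k}}\le 2\alpha_k\abs{\inner{\ve{a}_i}{\ve{e}_k}}$ and bounds the \emph{first} moment $\sum_{i\in B}\abs{\inner{\ve{a}_i}{\ve{e}_k}}\le C\sqrt{m\abs{B}/n}\,\norm{\ve{e}_k}{}$ by Lemma~\ref{avg_bound}, which is a one-line consequence of the operator-norm bound (Theorem~\ref{norm_bound}) and is automatically uniform in $\ve{x}$ and in the subset $B$. This yields a damage term of order $\sqrt{\beta}\norm{\ve{e}_k}{}^2/n$ and hence the $\beta\lesssim q^2$ condition, with no order-statistics argument at all.

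The genuine gap is in claim (iii). A counting statement of the form ``at most $e^{-ct^2}m$ rows exceed $t\norm{\ve{x}}{}/\sqrt{n}$'' is not Lipschitz in $\ve{x}$, so the ``standard $\eps$-net argument'' requires a net at resolution $\eps\asymp t/\sqrt{n}$ to transfer thresholds from net points to arbitrary $\ve{x}$; that net has cardinality $\exp(Cn\log(\sqrt n/t))$, and the union bound then demands $m\gtrsim n\log n$ rather than the $m\ge Cn$ of Theorem~\ref{median_convergence}. (It can be repaired --- e.g.\ by bounding the number of perturbed exceedances via Markov applied to $\norm{\ve{A}(\ve{x}-\ve{y})}{}^2$, which brings the net back to $\exp(C_{\beta,q}n)$ --- but this is exactly the delicate point, not a routine one, and you also need a separate crude cap on the top $O(1)$ order statistics where Chernoff gives no exponential concentration.) The cleanest fix is to drop AM--GM and (iii) entirely and bound the cross term through Lemma~\ref{avg_bound} as the paper does; you lose the (anyway log-damaged) linear-in-$\beta$ dependence but gain a proof that closes at constant aspect ratio with the tools already established.
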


The second method we introduce is \textbf{QuantileSGD}, which is a variant of SGD in which the step-size used in each iteration is chosen to avoid abnormally large steps.  Specifically, for each iterate $\ve{x}_k$, we consider the set of distances from $\ve{x}_k$ to the \plzcheck{set of $t$ sampled} hyperplane constraints specified by our linear system.\textsuperscript{\ref{subset}}  We choose the step-size as the $q$-quantile of these distances, where $q$ is a parameter of the method.  This prevents projections that are on the order of distances associated to corrupted equations.

Under nearly
the same assumptions for the system and slightly more restrictive assumptions on the quantile parameter, we also guarantee an RK-type convergence rate for QuantileSGD(q). Our second main result is Theorem \ref{SGD_convergence}, which shows that QuantileSGD converges, again when the fraction of corruptions is sufficiently small. 

\begin{theorem} 
\label{SGD_convergence}
Let the system be defined by random matrix $\ve{A} \in \mathbb{R}^{m \times n}$ satisfying Assumptions \ref{incoherent} and \ref{bounded_density} with the constant parameters $\densconst$ and $\subgaussconst$.\textsuperscript{\ref{note:constants}}  
Then with probability at least $1-c \exp(-c_q m)$, the iterates produced by the QuantileSGD$(q - \beta)$ Method \ref{QuantileSGD} with $q \in (0,1/2)$, where in each iteration the quantile is computed using the full corrupted residual (instead of subsampling, we use $t=m$), and initialized with arbitrary point $\ve{x}_0 \in \R^n$ satisfy $$\EE\left(\norm{\ve{e}_{k}}{}^2\right) \leq \left(1-\frac {C_q}{n}\right)^k \norm{\ve{e}_{0}}{}^2$$ 
as long as the fraction of corrupted entries $\beta =|\textup{supp}(\ve{b}_C)|/m$ is a sufficiently small constant and $m\geq Cn\log n$. (Recall that $\ve{e}_k$ denotes the error vector $\ve{x}_k - \ve{x}^*.$)
\end{theorem}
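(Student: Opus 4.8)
The plan is to analyze a single iteration of QuantileSGD and show that, in expectation over the randomly sampled row $\ve{a}_i$, the error contracts by a factor $1 - C_q/n$. Fix an iterate $\ve{x}_k$ with error $\ve{e}_k = \ve{x}_k - \ve{x}^*$, and let $\theta_k$ be the step-size chosen by the method, i.e.\ the $(q-\beta)$-quantile of the distances $\{|\inner{\ve{a}_i}{\ve{x}_k} - b_i| : i \in [m]\}$. Writing the update as $\ve{x}_{k+1} = \ve{x}_k - \theta_k \,\sgn(\inner{\ve{a}_i}{\ve{x}_k}-b_i)\,\ve{a}_i$ and using $\norm{\ve{a}_i}{} = 1$, expand $\norm{\ve{e}_{k+1}}{}^2 = \norm{\ve{e}_k}{}^2 - 2\theta_k\,\sgn(r_i^{(k)})\inner{\ve{a}_i}{\ve{e}_k} + \theta_k^2$, where $r_i^{(k)} = \inner{\ve{a}_i}{\ve{x}_k} - b_i$ is the (possibly corrupted) residual. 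The crux is to show the cross term dominates $\theta_k^2$ on average, uniformly over all possible $\ve{x}_k$. First I would use Proposition~\ref{med_bound} to argue that, uniformly in $\ve{x}_k$, the quantile $\theta_k$ is comparable to $\norm{\ve{A}\ve{e}_k}{}/\sqrt{m}$ — crucially, since only a $\beta$-fraction of residuals are corrupted, shifting to the $(q-\beta)$-quantile guarantees we land below the $q$-quantile of the \emph{clean} residuals $\{|\inner{\ve{a}_i}{\ve{e}_k}|\}$, so $\theta_k$ is controlled from above, and a lower bound follows from the quantile being at least the $(q-\beta)$-quantile of the clean residuals (since corruptions can only push residuals up), for $q - \beta$ still bounded away from $0$.

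The second ingredient handles the cross term $\EE_i\big[\sgn(r_i^{(k)})\inner{\ve{a}_i}{\ve{e}_k}\big] = \frac{1}{m}\sum_i \sgn(r_i^{(k)})\inner{\ve{a}_i}{\ve{e}_k}$. For the clean rows, $\sgn(r_i^{(k)}) = \sgn(\inner{\ve{a}_i}{\ve{e}_k})$, so those terms contribute $\frac{1}{m}\sum_{i \text{ clean}} |\inner{\ve{a}_i}{\ve{e}_k}|$, which by an anti-concentration / quantile argument (again Proposition~\ref{med_bound}, or a direct first-moment bound using Assumption~\ref{bounded_density}) is at least $c\,\norm{\ve{A}\ve{e}_k}{}/\sqrt m \gtrsim c\sqrt{m/n}\,\norm{\ve{e}_k}{}/\sqrt m = c\norm{\ve{e}_k}{}/\sqrt n$ using Proposition~\ref{sub_conditioning} (or the standard singular value bound, since here we need the full matrix). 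The corrupted rows contribute an error term of magnitude at most $\frac{1}{m}\sum_{i \text{ corr}} |\inner{\ve{a}_i}{\ve{e}_k}| \le \beta \cdot \max_{S: |S| = \beta m}\frac{1}{\beta m}\sum_{i \in S}|\inner{\ve{a}_i}{\ve{e}_k}|$, and I would bound this uniformly in $\ve{x}_k$ by the quantile concentration of Proposition~\ref{med_bound}: when $\beta$ is a sufficiently small constant, the average of the largest $\beta m$ clean inner products is still only a constant multiple of $\norm{\ve{A}\ve{e}_k}{}/\sqrt m$ (this is where $m \geq Cn\log n$ and subgaussianity of the rows enter, to control the tail of $|\inner{\ve{a}_i}{\ve{e}_k}|$ uniformly). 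Choosing $\beta$ small enough that the corrupted contribution is at most half the clean contribution, we get $\EE_i[\sgn(r_i^{(k)})\inner{\ve{a}_i}{\ve{e}_k}] \geq c'\norm{\ve{e}_k}{}/\sqrt n$ and simultaneously $\theta_k \leq C'\norm{\ve{e}_k}{}/\sqrt n$, whence $\EE_i\norm{\ve{e}_{k+1}}{}^2 \leq \norm{\ve{e}_k}{}^2 - 2\theta_k c'\norm{\ve{e}_k}{}/\sqrt n + \theta_k^2 \leq (1 - C_q/n)\norm{\ve{e}_k}{}^2$, after checking the quadratic in $\theta_k$ is negative over the relevant range (equivalently, optimizing/bounding $\theta_k$).

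Having established the one-step contraction on the high-probability event that all of the uniform-in-$\ve{x}$ estimates hold, I would take the total probability of failure via a union bound over the events in Propositions~\ref{sub_conditioning} and \ref{med_bound}, which is $c\exp(-c_q m)$, and then iterate the one-step bound with the tower property over the random row choices $i_1, \ldots, i_k$ to conclude $\EE\norm{\ve{e}_k}{}^2 \leq (1 - C_q/n)^k\norm{\ve{e}_0}{}^2$. One subtlety worth flagging: the uniform-in-$\ve{x}$ events must be chosen so that they are measurable with respect to $\ve{A}$ alone (not the sampling path), which is exactly why Propositions~\ref{sub_conditioning} and \ref{med_bound} are stated uniformly over $\ve{x} \in \R^n$ (equivalently over the sphere after normalizing $\ve{e}_k$) — the whole analysis is then conditional on $\ve{A}$ lying in the good event.

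The main obstacle I expect is the uniform control of the corrupted contribution $\frac{1}{m}\sum_{i \in S}|\inner{\ve{a}_i}{\ve{e}_k}|$ over \emph{all} corruption sets $S$ of size $\beta m$ and \emph{all} directions $\ve{e}_k$ simultaneously: naively this is a supremum over $\binom{m}{\beta m}$ sets and a net over the sphere, and making the bound come out as a \emph{small constant} multiple of the clean signal (rather than something that blows up with $\log(1/\beta)$ or with the net cardinality) is delicate. This is presumably the content of Proposition~\ref{med_bound}'s uniform quantile concentration, and the requirement $m \geq Cn\log n$ (stronger than the $m \geq Cn$ needed for QuantileRK) is the price paid for needing this uniform tail control on the order statistics of $\{|\inner{\ve{a}_i}{\ve{x}}|\}_i$ rather than just a single quantile.
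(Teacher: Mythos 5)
Your overall architecture matches the paper's: expand $\norm{\ve{e}_{k+1}}{}^2 = \norm{\ve{e}_k}{}^2 - 2\theta_k\,\sgn(r_i)\inner{\ve{a}_i}{\ve{e}_k} + \theta_k^2$, recognize the expected cross term $\frac1m\sum_i \sgn(r_i)\inner{\ve{a}_i}{\ve{e}_k}$ as the optimal step size (the paper's $\eta^*(\ve{x}_k)$ from OptSGD), split it into clean and corrupted rows, control the corrupted part by the incoherence bound (Lemma~\ref{avg_bound}), lower-bound the clean mean by anti-concentration, and sandwich the quantile step between constant multiples of the cross term. That is exactly the paper's Propositions~\ref{eta_opt_m} and~\ref{almostOptrate} followed by the quantile sandwich in the main proof.

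The genuine gap is at the decisive step where you need the quadratic $-2\theta_k E + \theta_k^2$ (with $E$ the cross term) to be negative and bounded away from zero, i.e.\ $\theta_k/E$ strictly between $0$ and $2$. You derive $E \geq c'\norm{\ve{e}_k}{}/\sqrt{n}$ from anti-concentration and $\theta_k \leq C'\norm{\ve{e}_k}{}/\sqrt{n}$ from a tail/quantile bound, but these two constants come from unrelated sources and there is no reason that $C' < 2c'$; as written the contraction does not follow. The paper closes this by comparing both quantities to the \emph{same} intermediate object, the empirical mean $\mean{\ve{x}_k - \ve{x}^*}$: Markov's inequality gives $\uncorrQ{q}{\ve{x}_k} \leq \frac{1}{1-q}\mean{\ve{x}_k-\ve{x}^*}$, and Proposition~\ref{eta_opt_m} gives $\mean{\ve{x}_k-\ve{x}^*} \leq (1+\delta)\eta^*(\ve{x}_k)$, so the ratio is at most $\frac{1+\delta}{1-q}$, which is below $2$ precisely when $q<1/2$ and $\delta$ is small. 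This is both the missing idea and the explanation of the hypothesis $q\in(0,1/2)$, which your argument never uses. A secondary error: ``corruptions can only push residuals up'' is false (corruptions are arbitrary and can shrink residuals to zero), so the correct sandwich is $\uncorrQ{q-2\beta}{\ve{x}_k} \leq \corrQ{q-\beta}{\ve{x}_k} \leq \uncorrQ{q}{\ve{x}_k}$ (Lemma~\ref{residual_vs_emp_quantiles}); the lower bound must therefore go through the $(q-2\beta)$-quantile of the clean residual, which is where Lemma~\ref{quantile_lower} and the $m \geq Cn\log n$ requirement actually enter (via the $c_q/\sqrt{n}$-net over the sphere), not in the control of the corrupted rows as you suggest.
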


In order to prove this result, we first introduce a method that we call OptSGD, which adaptively chooses an optimal step size at each iteration.  This method cannot be run in practice as it requires knowledge of $\ve{x}^*.$  However, we are able to show that QuantileSGD approximates OptSGD and therefore performs similarly well.  OptSGD may also serve as a useful benchmark when considering other SGD-type solvers for linear systems.

\rev{In each of these results, we make extensive use of theorems in high dimensional probability and do not attempt to track constants. In particular we allow our constants to depend on parameters of the underlying distributions on the rows.  We remedy this with our empirical results, which show that QuantileRK and QuantileSGD work well for practical sets of parameters.}

Finally, we consider a simpler setting that we call the \textit{streaming} setting, which may be viewed as the limiting case when the number of rows of $\ve{A}$ tends to infinity.  In this situation we do not rely on the non-asymptotic properties of random matrices and are able to give an analysis with better constants for the case when the matrix has Gaussian rows.  In particular,  Theorem \ref{streaming_sgd_convergence} shows that our methods can handle a $0.35$ fraction of corruptions, even when the values of the corruptions are chosen by an adversary. In practice, we see that the proposed methods (including the non-streaming setting) are able to accommodate much more complex cases when up to one half of the equations are corrupted.

\begin{remark}
 We get the same standard convergence rate for both methods; however, for QuantileSGD(q) we have a slightly stronger requirement on the aspect ratio of the matrix $\ve{A}$, and an additional restriction for the quantile $q < 1/2$ (whereas QuantileRK is proved for any quantile $q \in (0,1)$) In practice, QuantileSGD indeed diverges for the value of a quantile too close to one (see Figure~\ref{fig:various_quantiles_rk_sgd} (b)); however, one could safely use a much wider range of quantiles. We note that for a normalized Gaussian model (when the rows of $\ve{A}$ are sampled from the uniform distribution on the unit sphere) one can use the QuantileSGD(q) method for all $q \le 0.75$ (see Remark~\ref{gen_quant_for_uniform}). 
\end{remark}


\subsection{Related Works}\label{sec:RW}

There are many extensions and analyses of the SGD and RK methods; we review some of the results most relevant to our contributions.  The first two sections deal with consistent or \emph{noisy} systems, while the last section deals with methods for the problem of \emph{corrupted} systems.  We distinguish between \emph{corruption}, in which there are few but relatively large errors in the measurement vector, and \emph{noise}, in which there are many but relatively small errors in the measurement vector; the latter is more commonly considered within the SGD and RK literature.

\textbf{Randomized Kaczmarz variants.} The Kaczmarz method was proposed in the late 30s by Stefan Kaczmarz \cite{kaczmarzoriginal}.  The iterative method for solving consistent systems of equations was rediscovered and popularized for computed tomography as \emph{algebraic} \emph{reconstruction} \emph{technique (ART)} \cite{GBH70:Algebraic-Reconstruction, herman1993algebraic}.  While it has enjoyed research focus since that time \cite{Censor1983,Nat,  SesSta, Feinco, Hanke1990, feichtinger1995kaczmarz},
 the elegant analysis of the \emph{randomized Kaczmarz} method of \cite{strohmer2009randomized} has spurred a surge of research into variants of the Kaczmarz method. In \cite{strohmer2009randomized}, the authors proved the first exponential convergence rate in expectation \eqref{eq:RKconvrate} in the case of full-rank and consistent systems of equations.  
This result was generalized to the case when $\ve{A}$ is not full-rank in \cite{Zouzias2013}. Block methods which make use of several rows in each iteration have also become popular \cite{eggermont1981iterative, elf, popa1999block, popa2001fast, needell2014paved, rebrova2020block}.

One relevant and well-studied variant of the Kaczmarz method is that in which the row selection is performed greedily rather than randomly.
This greedy variant goes by the name \emph{Motzkin's relaxation method for linear inequalities (MM)} in the linear programming literature \cite{Motzkin1953,Agmon1954}, where convergence analyses coinciding with \eqref{eq:RKconvrate} have been shown \cite{Agmon1954}.
MM has been rediscovered in the Kaczmarz literature under the name ``most violated constraint control" or ``maximal-residual control" \cite{Censor1981,GreedyKaczmarz,Petra2016}. Several greedy extensions and hybrid randomized and greedy methods have been proposed and analyzed \cite{bai2018greedy,bai2018relaxed,DeLoera,morshed2019accelerated,loizou2019revisiting,morshed2020generalization,haddock2019greed}.
Like our methods, these greedy approaches require that sufficiently large entries of the residual be identified; however, these methods differ from ours in how these residual entries are used. 


Another relevant direction in the Kaczmarz literature are convergence analyses for systems in which the measurement matrices $\ve{A}$ have entries sampled according to a given probability distribution \cite{RefWorks:498, HN18Motzkin,haddock2019greed,rebrova2020block}.  
Our main results will make mild assumptions on the distribution of the entries of the measurement matrix.  

The convergence of many of the previously mentioned methods has been analyzed in the case that there is a small amount of noise in the system.  Generally, these analyses provide a \emph{convergence horizon} around the solution that depends upon the size of the entries of the noise.  In \cite{needell2010randomized}, the author proves that RK converges on inconsistent linear systems to a horizon which depends upon the size of the largest entry of the noise; a similar result is shown in \cite{HN18Motzkin} for MM.  In \cite{Zouzias2013, du2020pseudoinverse}, the authors develop methods that converge to the least-squares solution of a noisy system.  Meanwhile, in this work, our focus will be developing methods for systems in which there is \emph{corruption} rather than noise.  We will exploit the fact that the overdetermined system of equations has few corruptions in order to solve the uncorrupted system of equations.

\textbf{Stochastic gradient descent variants.} There has been an abundance of work developing and analyzing variants of SGD (e.g., step-size schedules, variants for specific and non-smooth objectives, etc.).  This is not meant to be a thorough survey of the literature in this area; we direct the reader to \cite{bottou2018optimization} and the references therein for a survey of recent advances, and outline here those most relevant to our approach. 

In \cite{robbins1951stochastic}, the authors provide a convergence analysis for SGD in the case that the objective is smooth and strongly convex and the step-size schedule diminishes at the appropriate rate. Such convergence results hold for fixed step-size schedules, but include a constant error term akin to the convergence horizon of RK for inconsistent systems \cite{needell2016stochastic}.  
Similar convergence rates can be proved in the case of non-smooth and non-strongly convex objectives \cite{shamir2013stochastic}; this result assumes an appropriately decreasing step-size schedule, and prove bounds on the objective value optimality gap.  Our results, unlike these, will use an iterate dependent step-size and will provide bounds on the distance between iterates and the solution of the uncorrupted system. 

Recently, batch variants that use several samples in each iteration have become popular and enjoy similar rates \cite{dekel2012optimal}.
In \cite{kawaguchi2020ordered}, the authors propose and analyze a greedy variant of SGD known as \emph{ordered SGD} that selects batches of the gradient according to the value of the associated objective components.  

An important branch of advances in the analysis of SGD deal with robustness to corruption and outliers in the objective defining data and sampled gradients, see e.g.,  \cite{chi2019median, li2020non}. Similar to our work, the aforementioned papers use quantile statistics, namely, a median-truncated SGD. 
Our methods differ from these in how we use the quantile statistic 
to achieve robustness to corruption and in our specification to linear systems.  

Here, we focus on the SGD variants developed for the LAD error; this problem is often known as LAD regression.  It has been previously noted that the $\ell_1$ objective is more robust to outliers than the $\ell_2$ objective \cite{wang2006regularized}; for this reason, there have been many algorithmic approaches to LAD regression.  These approaches have been motivated by maximum likelihood approaches \cite{li2004maximum}, rescaling techniques for low-dimensional problems \cite{bloomfield1980least}, iterative re-weighted least-squares \cite{schlossmacher1973iterative}, descent approaches \cite{wesolowsky1981new}, dimensionality reduction \cite{krvzic2018l1}, or linear programming approaches \cite{barrodale1973improved}; see \cite{gentle1988algorithms} and references therein. 

\textbf{Corrupted linear systems approaches.}
The corrupted linear system problem has been studied within the error-correction literature and has been formulated in the compressed sensing framework.  Many recovery results build upon and resemble those within the compressed sensing literature \cite{candes2005decoding}.  In particular, the optimization problem $\min \|\ve{A}\ve{x} - \ve{b}\|_0$ is a special case of the NP-hard MAX-FS problem \cite{amaldikann}.  However, if the measurement matrix $\ve{A}$ and the support of the corruption vector $\ve{b}_C$ satisfy appropriate properties, then the minimizer of the $\ell_0$ problem and the $\ell_1$ problem coincide and the problem can be solved efficiently using e.g., linear programming methods \cite{candes2005decoding, candes2005error, wright2010dense}. 

Previous work has developed and analyzed iterative methods for corrupted systems of equations.  As mentioned previously, much of the focus on this problem has been in the error correction and compressed sensing literature \cite{foucart2013math, eldar2012compressed}.  However, there has been work that has focused on iterative row-action methods; previous work in this direction includes \cite{HN18Corrupted, cloninger, amaldi}. 

Our work was inspired by \cite{haddock2018randomized, HN18Corrupted}, in which the authors propose and analyze randomized Kaczmarz variants that detect and remove corrupted equations in the system.  These methods differ from ours in that they exploit the ability of the standard RK method to detect and avoid few corruptions.  Meanwhile, our work develops variants of RK and SGD that use quantiles of the residual to converge even in the presence of corruptions. In \cite{HNRS20}, we present several methods related to those here; our results will significantly improve and generalize those in \cite{HNRS20}. 

\subsection{Notation and Definitions}\label{sec:notation}
We consider a system with measurement matrix $\ve{A} \in \mathbb{R}^{m \times n}$ and corrupted measurement vector $\ve{b} \in \mathbb{R}^{m}$ and $m \gg n$.  We denote the $i$th row of $\ve{A}$ by $\ve{a}_i$. If $\ve{A}$ is an $m\times n$ matrix and $S\subset [m],$ then let $\ve{A}_S$ denote the matrix obtained by restricting to the rows $S$.

The corrupted measurement vector $\ve{b}$ is the sum of the ideal (uncorrupted) measurement vector $\tilde{\ve{b}}$ and the corruptions $\ve{b}_C$.  The number of corruptions is a fraction $\beta \in (0,1)$ of the total number of measurements, $|\text{supp}(\ve{b}_C)| = \beta m$. Here $\text{supp}(\ve{x})$ denotes the set of indices of nonzero entries of $\ve{x}$.  The ideal measurement vector $\tilde{\ve{b}}$ defines a consistent system of equations with ideal solution $\ve{x}^*$.  We denote the $k$-th error as $\ve{e}_k := \ve{x}_k - \ve{x}^*$, where $\ve{x}_k$ denotes the $k$-th iterate of a method.

The notation $\|\ve{v}\|$ denotes the Euclidean norm of a vector $\ve{v}$. We denote the sphere in $\mathbb{R}^n$ as $S^{n-1}$, so $S^{n-1} = \left\{\ve{x} \in \mathbb{R}^n: \|\ve{x}\|=1 \right\}$. For a matrix $\ve{A}$, we denote its operator ($L_2 \to L_2$) norm by $\|\ve{A}\| = \sup_{\ve{x} \in S^{n-1}} \|\ve{A}\ve{x}\|$ and its Frobenious (or Hilbert-Schmidt) norm by $\|\ve{A}\|_F =\sqrt{\trace(\ve{A}^\top\ve{A})}$.  Throughout, we denote by $\sigma_{\min}(\ve{A})$ and $\sigma_{\max}(\ve{A})$ the smallest and largest singular values of the matrix $\ve{A}$ (that is, eigenvalues of the matrix $\sqrt{\ve{A}^\top\ve{A}}$).  Moreover, we always assume that the matrix $\ve{A}$ has full column rank, so that $\sigma_{\min}(\ve{A}) > 0$ and the convergence rate is non-trivial. \rev{We also denote the (scaled) condition number of the matrix as $\kappa(\ve{A}) = \norm{\ve{A}}{F}/\sigma_{\min}(\ve{A}) = \norm{\ve{A}}{F}\|\ve{A}^{-1}\|,$ where $\|\ve{A}^{-1}\|$ is defined to be $1/\sigma_{\min}(\ve{A}).$  }
 
Additionally, our work relies on several concepts that arise in high dimensional probability. We list all relevant definitions here, proper review of the concepts and their properties can be found in e.g., \cite{HDP}. If $X$ is a real-valued random variable, then the sub-Gaussian norm of $X$ is defined to be $\norm{X}{\Psi_2} = \inf \left\{t > 0 : \EE \exp(X^2/t^2) \leq 2 \right\}.$
If $\ve{v}$ is a random vector in $\R^n$, then the \rev{the sub-Gaussian norm of $\ve{v}$ is defined to be $\norm{\ve{v}}{\Psi_2} = \sup_{\ve{x}\in S^{n-1}} \norm{\inner{\ve{v}}{\ve{x}}}{\Psi_2}.$}
A random variable is said to be sub-Gaussian if it has finite sub-Gaussian norm.
If $\ve{v}$ is a random vector in $\R^n$ then $\ve{v}$ is said to be isotropic if $\EE(\ve{v}\ve{v}^\top) = I_n$ where $I_n$ is the identity on $\R^n.$ 

Our convergence analyses will take expectation with regards to the random sample taken in each iteration.  We denote expectation taken with regards to all iterative samples as $\mathbb{E}$.  We denote by $\EE_{k}$ the expectation with respect to the random sample selected in the $k$th iteration, conditioned on the results of the $k-1$ previous iterations of the method.

We use the following notations for the statistics of the corrupted and uncorrupted residual.  We let $\corrQ{q}{\ve{x}}$ denote the empirical $q$-quantile of the corrupted residual, 
\begin{equation}\label{corrqx}
\corrQ{q}{\ve{x}} := q\mhyphen\quantile \{\abs{\inner{\ve{a}_{i}}{\ve{x}} - b_{i}}: i \in [m]\}.
\end{equation}

\rev{For our purposes, the $q$-quantile of a multi-set $S$ is defined to be the $\lfloor q|S|\rfloor^{\text{th}}$ smallest entry of $S.$}

We let $\uncorrQ{q}{\ve{x}}$ denote the empirical $q$-quantile of the uncorrupted residual,
\begin{equation}\label{uncorrqx}
\uncorrQ{q}{\ve{x}} := q\mhyphen\quantile \left\{\abs{\inner{\ve{x} - \ve{x}^*}{\ve{a}_i}} : i \in [m]\right\}.
\end{equation}
We additionally define notation for the quantile statistics of sampled portions of the corrupted and uncorrupted residuals, 
\begin{equation}\label{subcorrqx}
\subcorrQ{q}{\ve{x}}{S} := q\mhyphen\quantile \{\abs{\inner{\ve{a}_{i}}{\ve{x}} - b_{i}}: i \in S\}
\end{equation}
and 
\begin{equation}\label{subuncorrqx}
\subuncorrQ{q}{\ve{x}}{S} := q\mhyphen\quantile \left\{\abs{\inner{\ve{x} - \ve{x}^*}{\ve{a}_i}} : i \in S\right\}
\end{equation}
where $S \subset [m]$ is the set of sampled indices.  Note that only $\corrQone{q}$ is available to us at run time since it makes use of the corrupted measurement vector $\ve{b}$; $\uncorrQone{q}$ is not available due to the use of unknown $\ve{x}^*$.  We employ $\uncorrQone{q}$ in our theoretical results as it allows us to naturally relate $\corrQone{q}$ and random matrix parameters. Finally, we let $\mean{\ve{x}}$ denote the average magnitude of the entries of $\ve{A}\ve{x}$,
\begin{equation}\label{mx}
\mean{\ve{x}} := \frac{1}{m} \sum_{i=1}^m \abs{\inner{\ve{x}}{\ve{a}_i}}.
\end{equation}


\section{Proposed Methods} \label{sec:pseudocode}

In this section, we give formal descriptions of the proposed \emph{QuantileRK(q)} and \emph{QuantileSGD(q)} methods. Our methods use the $q$-quantile entry of the residual $\abs{\ve{A}\ve{x} - \ve{b}}$ as a proxy to avoid large steps in the direction of corrupted equations. Namely, in both methods, in each iteration we sample not only an index for the RK update (which we will call the \emph{RK-index}), but also $t$ additional indices.  We then access the entries of the residual associated to these indices and compute their empirical $q$-quantile, $\subcorrQ{q}{\ve{x}}{\{i_l: l \in [t]\}}$. 

Then, the QuantileRK(q) method below takes the step (associated to the RK-index and governed by standard RK projection \eqref{eq:RKupdate}) only if the entry of the residual associated to this index is less than or equal to $\subcorrQ{q}{\ve{x}_{j-1}}{\{i_l: l \in [t]\}}$; \plzcheck{we say that a row $\ve{a}_i$ of $A$ is \emph{acceptable} on a given iteration if this is true.} We assume that the rows of our system are normalized.  If this is not the case, one could normalize the rows as they are sampled.



\begin{algorithm}
	\caption{QuantileRK(q)}\label{QuantileRK}
	\begin{algorithmic}[1]
		\Procedure{QuantileRK}{$\ve{A},\ve{b}$, q, t, N}
		\State{$\ve{x}_0 = \ve{0}$}
		\For{j = 1, \ldots, N}
		\State{sample $i_1, \ldots i_t \sim \text{Uniform} (1,\ldots, m)$}
		\State{sample $k\sim\text{Uniform}(1, \ldots, m)$}
		\If{ $\abs{\inner{\ve{a}_k}{\ve{x}_{j-1}} - b_k} \leq \subcorrQ{q}{\ve{x}_{j-1}}{\{i_l: l \in [t]\}} $}
		\State{$\ve{x}_j = \ve{x}_{j-1} - \left(\inner{\ve{x}_{j-1}}{\ve{a}_k} - b_k\right) \ve{a}_k$}
		\Else
		\State{$\ve{x}_{j} = \ve{x}_{j-1}$}
		
		\EndIf

		\EndFor{}
		
		\Return{$\ve{x}_N$}
		\EndProcedure
	\end{algorithmic}
\end{algorithm}

The QuantileSGD(q) method, Method \ref{QuantileSGD} uses the same quantile of the sampled residual $\subcorrQ{q}{\ve{x}_{j-1}}{\{i_l: l \in [t]\}}$ to define the step size.  The method steps along the direction defined by the RK update~\eqref{eq:RKupdate} based on the RK-index with step size $\gamma$ equal to $\subcorrQ{q}{\ve{x}_{j-1}}{\{i_l: l \in [t]\}}$.

\begin{algorithm}
	\caption{QuantileSGD(q)}\label{QuantileSGD}
	\begin{algorithmic}[1]
		\Procedure{QuantileSGD}{$\ve{A},\ve{b}$, q, t, N}
		\State{$\ve{x}_0 = \ve{0}$}
		\For{j = 1, \ldots, N}
		\State{sample $i_1, \ldots i_t \sim \text{Uniform} (1,\ldots, m)$}
		\State{sample $k\sim\text{Uniform}(1, \ldots, m)$}
		\State{$\gamma = \subcorrQ{q}{\ve{x}_{j-1}}{\{i_l: l \in [t]\}} $}
		\State{$\ve{x}_j = \ve{x}_{j-1} - \gamma \cdot \text{sign}\left(\inner{\ve{x}_{j-1}}{\ve{a}_k} - b_k\right)\ve{a}_k$}

		\EndFor{}
		
		\Return{$\ve{x}_N$}
		\EndProcedure
	\end{algorithmic}
\end{algorithm}

Note that this pseudocode uses only the maximum number of iterations $N$ as stopping criterion, but one could also run these methods for a specific amount of time, or implement any other stopping criterion.

Finally, we note that the behavior of both the QuantileRK and QuantileSGD depend heavily upon the input parameters.  We clarify required constraints on these parameters in the theoretical results in Section \ref{sec:theor_results}.  Additionally, we discuss the effect of these parameter choices on computation and other implementation considerations in Section \ref{sec:imp}.

\section{Theoretical Results}\label{sec:theor_results}
Here we state and prove our theoretical results.  We begin with foundational results in high-dimensional probability in Subsection~\ref{subsec:foundations} and then prove our main convergence results, Theorems~\ref{median_convergence} and \ref{SGD_convergence}, in Subsections~\ref{subsec:QuantileRK} and \ref{subsec:QuantileSGD}.  In our proof of convergence of QuantileSGD$(q)$, Theorem~\ref{SGD_convergence}, we propose an ideal method, OptSGD, and demonstrate that it is well approximated by QuantileSGD$(q)$.  We additionally prove convergence of QuantileSGD$(q)$ in the simpler streaming setting in Subsection~\ref{sec:streaming}.

\subsection{Theoretical Foundations}\label{subsec:foundations}
In this subsection, we prove several fundamental results which we apply in our convergence analyses for QuantileRK and QuantileSGD in Sections \ref{subsec:QuantileRK} and \ref{subsec:QuantileSGD}. 

\subsubsection{Auxiliary results -- properties of random matrices}
 For the largest singular values of a random matrix with independent isotropic rows, we will be using the following standard bound (the proof can be found in e.g., \cite[Theorem  4.6.1]{HDP}).
\begin{theorem}\label{norm_bound}
Let $\ve{A} \in \mathbb{R}^{m\times n}$ be a matrix whose rows are independent, mean zero, sub-Gaussian and isotropic with sub-Gaussian norm bounded by $\subgaussconst.$  Then the largest singular value (operator norm) of $\ve{A}$ is bounded by \[\sqrt{m} + C\subgaussconst^2(\sqrt{n}+t)\] with probability at least $1 - 2\exp(-t^2).$
\end{theorem}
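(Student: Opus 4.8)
The plan is to prove this by the classical $\varepsilon$-net plus Bernstein argument, which produces exactly the \emph{additive} estimate $\sqrt m + O(\subgaussconst^2(\sqrt n + t))$; all of the routine ingredients (the cardinality of a net of the sphere, the net inequality for quadratic forms of symmetric matrices, Bernstein's inequality, and the sub-exponential norm of the square of a sub-Gaussian) I would simply quote from \cite{HDP}.

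First I would reduce the operator norm to a supremum over a net. Isotropy of the rows gives $\EE\parens{\tfrac1m\ve{A}^\top\ve{A}} = I_n$, and $\norm{\ve{A}}{}^2 = m\,\norm{\tfrac1m\ve{A}^\top\ve{A}}{}$, so it is equivalent to control $\ve{M} := \tfrac1m\ve{A}^\top\ve{A} - I_n$. Fixing a $(1/4)$-net $\mathcal N$ of $S^{n-1}$ with $|\mathcal N|\le 9^n$, the standard net bound for symmetric matrices gives $\norm{\ve{M}}{}\le 2\max_{\ve{x}\in\mathcal N}\abs{\tfrac1m\norm{\ve{A}\ve{x}}{}^2 - 1}$. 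For a fixed $\ve{x}\in S^{n-1}$ I would write $\tfrac1m\norm{\ve{A}\ve{x}}{}^2 - 1 = \tfrac1m\sum_{i=1}^m\parens{\inner{\ve{a}_i}{\ve{x}}^2 - 1}$, an average of independent centered random variables; since $\inner{\ve{a}_i}{\ve{x}}$ is sub-Gaussian with $\psi_2$-norm at most $\subgaussconst$ and (by isotropy) mean square $1$, each $\inner{\ve{a}_i}{\ve{x}}^2 - 1$ is sub-exponential with $\psi_1$-norm $\lesssim \subgaussconst^2$, and Bernstein's inequality gives $\PP\parens{\abs{\tfrac1m\norm{\ve{A}\ve{x}}{}^2 - 1}\ge u}\le 2\exp\parens{-c\,m\min(u^2/\subgaussconst^4,\, u/\subgaussconst^2)}$. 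Setting $\delta := \sqrt{(n\log 9 + t^2)/(cm)}$ (so that $\delta \asymp \sqrt{n/m} + t/\sqrt m$) and $u := \subgaussconst^2\max(\delta,\delta^2)$ makes $c\,m\min(u^2/\subgaussconst^4,u/\subgaussconst^2)$ equal to $n\log 9 + t^2$ in both regimes of $\delta$, so a union bound over the net yields $\norm{\ve{M}}{}\le 2u$ with probability at least $1 - 2\exp(-t^2)$.

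Finally I would convert this into the stated bound by splitting on the size of $\delta$. On the good event $\norm{\ve{A}}{}^2 \le m(1+2u)$. If $\delta\le1$, then $u = \subgaussconst^2\delta$ and $\sqrt{1+2u}\le 1+u$ give $\norm{\ve{A}}{}\le\sqrt m + \subgaussconst^2\sqrt m\,\delta$, and $\sqrt m\,\delta = \sqrt{(n\log 9 + t^2)/c}\lesssim\sqrt n + t$, so $\norm{\ve{A}}{}\le\sqrt m + C\subgaussconst^2(\sqrt n + t)$. If $\delta > 1$, then $u = \subgaussconst^2\delta^2 \ge 1$ (using $\subgaussconst\gtrsim 1$, which holds for isotropic rows), so $m(1+2u)\le 3mu$ and $\norm{\ve{A}}{}\le\sqrt{3m\subgaussconst^2\delta^2} = \sqrt3\,\subgaussconst\sqrt{(n\log 9 + t^2)/c}\lesssim \subgaussconst(\sqrt n + t)\le\sqrt m + C\subgaussconst^2(\sqrt n + t)$. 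The concentration step is entirely routine; the one point needing care is this final conversion --- keeping the estimate additive ($\sqrt m + \cdots$) rather than multiplicative forces the case split on whether $\sqrt n + t$ is small or large compared to $\sqrt m$, because the passage from $\norm{\tfrac1m\ve{A}^\top\ve{A}-I_n}{}$ to $\norm{\ve{A}}{}$ uses a different elementary inequality in each regime.
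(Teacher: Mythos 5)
Your proof is correct, and there is essentially nothing in the paper to compare it against: the paper states this result without proof, citing \cite[Theorem~4.6.1]{HDP}. Your $\varepsilon$-net plus Bernstein argument, including the final case split on whether $\sqrt{n}+t$ is small or large relative to $\sqrt{m}$ to keep the bound additive, is precisely the standard proof of that cited theorem.
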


The smallest singular value of random matrices is sometimes called a ``hard edge" as it is typically harder to quantify. This is the case in our application as well; we will prove Proposition~\ref{sub_conditioning} giving a uniform lower bound on the singular values \plzcheck{of} the submatrices of $\ve{A}$.

The first ingredient that we need for this (and it will be used in other places later in the text as well) is an \emph{$\eps$-net for the unit sphere}.  We say that $\mathcal{N}$ is an $\eps$-net of a set $S\subseteq \R^n$ if $\mathcal{N}$ is a subset of $S$ and each point in $S$ is within a Euclidean distance $\eps$ of some point in $\mathcal{N}.$  The $\eps$-covering number of $S$ is the cardinality of the smallest $\eps$-net for $S$. We will use the fact that the $\eps$-covering number of $S^{n-1}$ is bounded by $(3/\eps)^n$ \cite[Corollary 4.2.13]{HDP}.

We will also use the following direct corollary of the Hoeffding's inequality (see, e.g., \cite[Theorem 2.6.2]{HDP}) that subgaussian random variables concentrate as well as Gaussians under taking means.

\begin{lemma}
\label{mean_subgaussian}
Let $X_1,\ldots, X_m$ be i.i.d.\ subgaussian random variables with subgaussian norm $\subgaussconst.$ Then the subgaussian norm of the mean satisfies \[\norm{\frac{1}{m}\sum_{i=1}^m X_i}{\Psi_2} \leq C \frac{\subgaussconst}{\sqrt{m}}.\]
\end{lemma}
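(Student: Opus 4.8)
The plan is to reduce the claim to the standard additivity bound for sums of independent sub-Gaussians and then use positive homogeneity of the sub-Gaussian (Orlicz) norm; this is essentially a repackaging of the cited Hoeffding-type inequality \cite[Theorem~2.6.2, Proposition~2.6.1]{HDP}. One point worth flagging at the outset is that the $1/\sqrt m$ decay requires the $X_i$ to be centered: if $\EE X_1 \neq 0$ then $\tfrac1m\sum_i X_i$ concentrates around $\EE X_1$, so $\norm{\tfrac1m\sum_i X_i}{\Psi_2} \gtrsim |\EE X_1|$ and no $1/\sqrt m$ improvement is possible. In every place we apply this lemma the relevant variables (coordinates of $\sqrt n\,\ve{a}_i$ tested against a fixed unit vector) are mean zero by Assumption~\ref{incoherent}, so I would run the argument assuming $\EE X_i = 0$, noting that the general case follows after centering at the cost of an additive $|\EE X_1|$ term.

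With that in hand, I would first apply \cite[Proposition~2.6.1]{HDP} to the independent, mean-zero, sub-Gaussian variables $X_1,\dots,X_m$ to get
\[
\norm{\sum_{i=1}^m X_i}{\Psi_2}^2 \;\le\; C \sum_{i=1}^m \norm{X_i}{\Psi_2}^2 \;\le\; C\,m\,\subgaussconst^2,
\]
hence $\norm{\sum_{i=1}^m X_i}{\Psi_2} \le \sqrt{C}\,\sqrt{m}\,\subgaussconst$. Since $\norm{\cdot}{\Psi_2}$ is a genuine norm, $\norm{\tfrac1m\sum_{i=1}^m X_i}{\Psi_2} = \tfrac1m\norm{\sum_{i=1}^m X_i}{\Psi_2} \le \tfrac{\sqrt{C}}{\sqrt m}\,\subgaussconst$, which is the stated bound (with $C$ renamed).

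If a self-contained derivation is preferred over citing the norm version, I would instead go through the moment generating function: a mean-zero $X$ with $\norm{X}{\Psi_2}\le\subgaussconst$ obeys $\EE\exp(\lambda X)\le\exp(C\lambda^2\subgaussconst^2)$ for all $\lambda\in\R$, so by independence $\EE\exp\!\big(\lambda\sum_i X_i\big)=\prod_i\EE\exp(\lambda X_i)\le\exp(Cm\lambda^2\subgaussconst^2)$; this MGF bound characterizes a sub-Gaussian with parameter of order $\sqrt m\,\subgaussconst$, and converting back to $\norm{\cdot}{\Psi_2}$ and dividing by $m$ again gives the result. Either way the proof is a few lines of constant-tracking, and I do not anticipate any real obstacle beyond the centering remark.
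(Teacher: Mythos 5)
Your proof is correct and is exactly the argument the paper intends: the lemma is stated there without proof as a direct corollary of Hoeffding's inequality, i.e., the additivity of squared $\psi_2$-norms over sums of independent mean-zero sub-Gaussians together with positive homogeneity of the norm. Your centering caveat is also well taken — as written the lemma omits the mean-zero hypothesis, without which the $1/\sqrt{m}$ decay is false (take $X_i\equiv 1$), and this is not purely cosmetic since the paper later invokes the lemma for the non-negative variables $\abs{\inner{\ve{a}_i}{\ve{u}-\ve{v}}}$, where the $1/\sqrt m$ bound holds only after subtracting the mean.
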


Next, the following anti-concentration lemma for random vectors with bounded density is a direct corollary of \cite[Theorem~1.2]{rudelson2015small}.

\begin{lemma}
\label{bounded_pdf}
Let $\ve{x}$ be a random vector in $\R^n$ such that the density function of each coordinate $x_i$ is bounded by $\densconst \sqrt{n}$, where $\densconst > 0$ is an absolute constant.  Then for any fixed $\ve{u}\in S^{n-1}$ we have  $$\Pr\left(\abs{\inner{\ve{x}}{\ve{u}}} \leq \frac{\sqrt{t}}{\sqrt{n}}\right) \leq 2\sqrt{2} \densconst\sqrt{t}.$$
\end{lemma}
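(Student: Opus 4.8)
The plan is to derive this as an immediate specialization of \cite[Theorem~1.2]{rudelson2015small}. That result is an anti-concentration (small-ball) estimate for a linear image $B\ve{x}$ of a random vector $\ve{x}$ with bounded-density coordinates: it controls $\sup_{\ve{v}}\Pr\big(\norm{B\ve{x} - \ve{v}}{} \le \eps\,\norm{B}{F}\big)$ in terms of $\eps$, the coordinate-density bound, and (the rank of) $B$. I would apply it in the degenerate rank-one case: take $B$ to be the $1\times n$ matrix whose single row is $\ve{u}^\top$, so that $B\ve{x} = \inner{\ve{x}}{\ve{u}}$ is scalar and $\norm{B}{F} = \norm{\ve{u}}{} = 1$.

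First, observe that the bounded-density hypothesis (as in Assumption~\ref{bounded_density}, here applied to the random vector $\ve{x}$) furnishes exactly the hypothesis of \cite[Theorem~1.2]{rudelson2015small} with density parameter $\densconst\sqrt{n}$. Second, instantiate that theorem for the above $B$ and for the centered ball of radius $\eps$; after plugging in the density parameter and the explicit absolute constant from \cite{rudelson2015small}, this reads $\Pr\big(\abs{\inner{\ve{x}}{\ve{u}}} \le \eps\big) \le 2\sqrt{2}\,\densconst\sqrt{n}\,\eps$ (equivalently: $\inner{\ve{x}}{\ve{u}}$ has density at most $\sqrt{2}\,\densconst\sqrt{n}$, and the ball has length $2\eps$). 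Third, set $\eps = \sqrt{t}/\sqrt{n}$; the right-hand side collapses to $2\sqrt{2}\,\densconst\sqrt{n}\cdot\sqrt{t}/\sqrt{n} = 2\sqrt{2}\,\densconst\sqrt{t}$, which is the claim. (When $t$ is large enough that $2\sqrt{2}\,\densconst\sqrt{t}\ge 1$, the asserted bound is vacuous, so nothing needs to be checked in that range.)

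There is essentially no mathematical obstacle; the only thing requiring care is bookkeeping — matching the normalization conventions of \cite{rudelson2015small} (ball radius versus diameter, and the precise way the density bound enters) and checking that the hypotheses there coincide with Assumption~\ref{bounded_density} (in particular that it is the per-coordinate density, scaled by $\sqrt{n}$, that is controlled) — so that the absolute constant emerges as exactly $2\sqrt{2}$. No ideas beyond correctly quoting and specializing the cited theorem are needed.
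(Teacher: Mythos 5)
Your proposal is correct and matches the paper's approach exactly: the paper likewise obtains this lemma as a direct corollary of \cite[Theorem~1.2]{rudelson2015small} specialized to the rank-one map $\ve{x}\mapsto\inner{\ve{x}}{\ve{u}}$, and your bookkeeping (marginal density at most $\sqrt{2}\densconst\sqrt{n}$, an interval of length $2\eps$, then $\eps=\sqrt{t}/\sqrt{n}$) recovers the constant $2\sqrt{2}$. The only caveat worth recording --- and it applies equally to the paper's own statement --- is that the cited theorem assumes the coordinates of $\ve{x}$ are independent, a hypothesis not written explicitly in the lemma, so your "checking that the hypotheses coincide" step should verify independence as well as the density bound.
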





We will use this anti-concentration result to prove a uniform lower bound for the smallest singular value over all $\alpha m \times n$ submatrices of a tall random matrix of the size $m \times n$. It is well known that for a single fixed (row-)submatrix $\ve{A}_T$ of that size, $\sigma_{\min}(\ve{A}_T) \gtrsim \sqrt{m}/\sqrt{n}$ (see e.g., \cite[Theorem 4.6.1]{HDP}). However, naively taking a union bound over all $\binom{m}{\alpha m}$ $\alpha m$-tall row submatrices results in a trivial probability bound. In Proposition~\ref{sub_conditioning}, we provide a more delicate row-wise analysis by employing Chernoff's bound to provide a good uniform lower bound with probability exponentially close to one.

\begin{proposition}
\label{sub_conditioning}
Let $\alpha \in (0,1]$ and let random matrix $\ve{A} \in \mathbb{R}^{m\times n}$ satisfy Assumptions \ref{incoherent} and \ref{bounded_density}.   Then there exist absolute constants $C_1, c_2 > 0$ so that if the matrix $\ve{A}$ is tall enough, namely, 
\begin{equation}\label{size_assump}\frac mn > C_1 \frac{1}{\alpha} \log \frac{\densconst \subgaussconst}{\alpha},
\end{equation}
then the following uniform lower bound holds for the smallest singular values of all its row submatrices that have at least $\alpha m$ rows.
$$
\Pr\left(\inf_{\substack{ T\subseteq [m]: \\ \abs{T} \geq \alpha m}} \sigma_{\min}(\ve{A}_T) \geq \frac{\alpha^{3/2}}{24\densconst} \sqrt{\frac mn}\right) \ge 1 -3\exp(-c_2\alpha m)
$$
\end{proposition}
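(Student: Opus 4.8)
The plan is to combine a standard $\eps$-net argument for the smallest singular value of a fixed submatrix with a Chernoff-type control on how many rows of $\ve{A}$ can be "small" in a given direction, uniformly over directions. First I would fix a unit vector $\ve{u} \in S^{n-1}$ and, for a threshold $\delta = c\alpha/\densconst$ (the precise constant chosen at the end), call a row $\ve{a}_i$ \emph{bad for $\ve{u}$} if $|\inner{\ve{a}_i}{\ve{u}}| \le \delta/\sqrt{n}$. By Lemma~\ref{bounded_pdf} applied to $\sqrt{n}\ve{a}_i$ (whose coordinate densities are bounded by $\densconst\sqrt{n}$ by Assumption~\ref{bounded_density}), each row is bad for $\ve{u}$ with probability at most $2\sqrt{2}\densconst\delta \le \alpha/4$ once $\delta$ is a small enough multiple of $\alpha/\densconst$. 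Since the rows are independent (Assumption~\ref{incoherent}), the number $N_{\ve{u}}$ of bad rows is a sum of independent Bernoullis with mean $\le (\alpha/4)m$, so Chernoff's bound gives $\Pr(N_{\ve{u}} \ge (\alpha/2) m) \le \exp(-c\alpha m)$ for an absolute constant $c$.

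Next I would make this uniform over $\ve{u}$ via an $\eps$-net. Take an $\eps$-net $\mathcal{N}$ of $S^{n-1}$ of size at most $(3/\eps)^n$ (Corollary 4.2.13 of \cite{HDP}), with $\eps$ a small absolute constant to be fixed. A union bound over $\mathcal{N}$ costs a factor $(3/\eps)^n$, and this is absorbed by $\exp(-c\alpha m)$ precisely when $m/n$ exceeds an absolute constant times $(1/\alpha)\log(1/\eps)$; together with the requirement that $\delta \sim \alpha/\densconst$ enter the bad-row probability, this is where the hypothesis \eqref{size_assump} with its $\log(\densconst\subgaussconst/\alpha)$ comes from. (The $\subgaussconst$ appears because we also need the good event from Theorem~\ref{norm_bound} that $\|\ve{A}\| \le C\sqrt{m}$, which we use in the net-transfer step below.) So, off an event of probability $\le 2\exp(-c\alpha m)$, \emph{simultaneously for every $\ve{v}\in\mathcal{N}$}, at least $(1-\alpha/2)m$ rows satisfy $|\inner{\ve{a}_i}{\ve{v}}| > \delta/\sqrt{n}$.

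Now fix any $T\subseteq[m]$ with $|T|\ge\alpha m$ and any $\ve{x}\in S^{n-1}$; pick $\ve{v}\in\mathcal{N}$ with $\|\ve{x}-\ve{v}\|\le\eps$. The set of rows in $T$ that are good for $\ve{v}$ has size at least $|T| - (\alpha/2)m \ge (\alpha/2)m$. For each such row, $|\inner{\ve{a}_i}{\ve{x}}| \ge |\inner{\ve{a}_i}{\ve{v}}| - |\inner{\ve{a}_i}{\ve{x}-\ve{v}}| \ge \delta/\sqrt{n} - \eps\|\ve{a}_i\|= \delta/\sqrt{n}-\eps$ — but since $\delta/\sqrt n$ is tiny this crude bound is useless, so instead I would use the standard trick of bounding $\|\ve{A}_T\ve{x}\|^2 \ge \|\ve{A}_T\ve{v}\|^2 - 2\|\ve{A}_T\ve{v}\|\,\|\ve{A}_T(\ve{x}-\ve{v})\|$ isn't quite it either; the clean route is: $\|\ve{A}_T\ve{x}\| \ge \|\ve{A}_T\ve{v}\| - \|\ve{A}_T\|\|\ve{x}-\ve{v}\| \ge \|\ve{A}_T\ve{v}\| - C\sqrt{m}\,\eps$, where $\|\ve{A}_T\|\le\|\ve{A}\|\le C\sqrt m$ on the good event. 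And $\|\ve{A}_T\ve{v}\|^2 \ge \sum_{i\in T,\ \text{good for }\ve{v}} \inner{\ve{a}_i}{\ve{v}}^2 \ge (\alpha/2)m\cdot \delta^2/n$, i.e. $\|\ve{A}_T\ve{v}\|\ge \sqrt{\alpha/2}\,\delta\sqrt{m/n}$. Choosing $\eps$ a small enough absolute multiple of $\sqrt{\alpha}\,\delta/\sqrt{n}$ — equivalently $\eps \sim \alpha^{3/2}/(\densconst\sqrt n)$, which only improves the net size requirement to $m/n \gtrsim (1/\alpha)\log(\densconst\sqrt n/\alpha^{3/2})$, still of the claimed form after absorbing $\log n$ into constants (or more carefully choosing $\eps$ independent of $n$ and accepting a slightly worse but still absolute constant in front) — makes the subtracted term at most half of $\|\ve{A}_T\ve{v}\|$, yielding $\|\ve{A}_T\ve{x}\| \ge \tfrac12\sqrt{\alpha/2}\,\delta\sqrt{m/n} \gtrsim (\alpha^{3/2}/\densconst)\sqrt{m/n}$ for all $\ve{x}\in S^{n-1}$, hence $\sigma_{\min}(\ve{A}_T)$ is bounded below as claimed. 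Taking the final constant to be $1/(24\densconst)$ and collecting the three bad events (Chernoff over the net, and the operator-norm event) into $3\exp(-c_2\alpha m)$ finishes the proof.

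The main obstacle is bookkeeping the interaction between the three scales — the anti-concentration threshold $\delta\sim\alpha/\densconst$, the net resolution $\eps$, and the aspect ratio $m/n$ — so that the $\eps$-net entropy $n\log(3/\eps)$ is dominated by the Chernoff exponent $c\alpha m$ while keeping all surviving constants absolute; this is exactly the content of hypothesis \eqref{size_assump}. A secondary subtlety is that the naive pointwise triangle-inequality bound on $|\inner{\ve{a}_i}{\ve{x}}|$ is too lossy because $\delta/\sqrt n \to 0$, so the net transfer must be done at the level of the vector norm $\|\ve{A}_T\ve{x}\|$ using the operator-norm control, rather than row by row.
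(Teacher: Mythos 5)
Your argument follows the paper's proof almost step for step: anti-concentration (Lemma~\ref{bounded_pdf}) plus a Chernoff bound to control, for a fixed direction, the number of rows with small inner product; a union bound over an $\eps$-net of $S^{n-1}$; and a transfer from the net to the whole sphere via $\|\ve{A}_T\ve{x}\| \ge \|\ve{A}_T\ve{v}\| - \|\ve{A}_T\|\,\|\ve{x}-\ve{v}\|$. The quantile of badness ($\alpha/4$ per row, at most $\alpha m/2$ bad rows) and the resulting bound $\|\ve{A}_T\ve{v}\| \gtrsim (\alpha^{3/2}/\densconst)\sqrt{m/n}$ are the same as in the paper.

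The one genuine problem is the operator-norm bound in your transfer step. You use $\|\ve{A}_T\| \le \|\ve{A}\| \le C\sqrt{m}$, which is essentially the trivial Frobenius bound (the rows are unit vectors), and this forces $\eps \sim \alpha^{3/2}/(\densconst\sqrt{n})$, hence a net-entropy cost $n\log(3/\eps) \sim n\log\bigl(\densconst\sqrt{n}/\alpha^{3/2}\bigr)$. That is \emph{not} dominated by the Chernoff exponent $c\alpha m$ under hypothesis~\eqref{size_assump}: the extra $\log n$ cannot be ``absorbed into constants,'' and your fallback of ``choosing $\eps$ independent of $n$'' is impossible as long as the subtracted term is $C\sqrt{m}\,\eps$, since it must be beaten by $\|\ve{A}_T\ve{v}\| \sim (\alpha^{3/2}/\densconst)\sqrt{m/n}$. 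The fix is exactly the role of Theorem~\ref{norm_bound} in the paper: because $\sqrt{n}\ve{a}_i$ is isotropic with subgaussian norm at most $\subgaussconst$, one has $\|\ve{A}\| \le (1+C\subgaussconst^2)\sqrt{m/n}$ with probability $1-2\exp(-cm)$ (this, not the net size, is also where $\subgaussconst$ enters the statement). With the correct scaling the subtracted term is $\eps(1+C\subgaussconst^2)\sqrt{m/n}$, so one may take $\eps = \alpha^{3/2}/\bigl(24\densconst(1+C\subgaussconst^2)\bigr)$, independent of $n$, and the entropy cost $n\log(3/\eps)$ is controlled by~\eqref{size_assump} exactly as stated. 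With that single correction your proof is the paper's proof.
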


\begin{proof}
 Let $\eps \in (0, 1]$ be a constant (chosen below in \eqref{eps_choice}). Recall that there is  an $\eps$-net $\mathcal{N}$ of $S^{n-1}$ with the cardinality $\abs{\mathcal{N}} \leq \parens{\frac{3}{\eps}}^n$. That is, for any $\ve{y} \in S^{n-1}$ there exists $\ve{x} \in \mathcal{N}$ such that $\|\ve{y} - \ve{x}\|_2 \le \eps$. Taking the infimum over all unit norm vectors $\ve{x}$, we get that for any $T \subseteq [n]$, we have 
 \begin{equation}\label{inf_net_bound}
 \sigma_{\min}(\ve{A}_T) = \inf_{\ve{y} \in S^{n-1}}\|\ve{A}_T \ve{y}\| \geq \parens{\inf_{\ve{x}\in \mathcal{N}}\norm{\ve{A}_T\, \ve{x}}{}} - \eps\norm{\ve{A}_T}.
 \end{equation} 
We will bound \plzcheck{the} two terms in the right hand side of \eqref{inf_net_bound} separately. First, for any subset $T \subset [n]$, we can bound $\norm{\ve{A}_T}{\text{op}} \le \norm{\ve{A}}{\text{op}}$, and so by Theorem \ref{norm_bound}
 $$
 \Pr\left(\norm{\ve{A}_T}{} \le (1+C\subgaussconst^2)\sqrt{\frac mn}\right)\ge 1 - 2\exp(-cm)
 $$  
for some absolute constants $C, c > 0$. Let us choose
\begin{equation}\label{eps_choice}
\eps = \frac{\alpha^{3/2}}{24\densconst(1+C\subgaussconst^2)}.
\end{equation}
To bound the first term in the right-hand side of \eqref{inf_net_bound}, first consider a fixed $\ve{x}$ in $\mathcal{N}.$  For $i\in[m],$ let $\mathcal{E}^{\ve{x}}_i$ be the event 
$$ \mathcal{E}^{\ve{x}}_i :=\left\{\abs{\inner{\ve{a}_i}{\ve{x}}}^2 < \frac{\alpha^2}{64 \densconst^2}\cdot \frac1n\right\}.$$  
By Lemma \ref{bounded_pdf}, $\Pr(\mathcal{E}^{\ve{x}}_i)\leq \alpha/4$ for each fixed $\ve{x} \in S^{n-1}$ and $i \in [m]$.  A Chernoff bound then gives 
$$\Pr\left(\; \text{events } \mathcal{E}^{\ve{x}}_i \text{ occur for at least } \alpha m/2 \text{ indices } i \in [m]\; \right) \leq \exp(-\alpha m / 12).$$ 
Now, for any fixed $\ve{x}$, provided that $\mathcal{E}_i^{\ve{x}}$ occurs for at most $\alpha m/2$ indices $i \in [m]$, for all $T$ with $|T| \geq \alpha m$ we have 
$$\norm{\ve{A}_T\, \ve{x}}{} \geq \sqrt{\parens{\frac{\alpha}{2}m}\cdot\parens{\frac{\alpha^2}{64\densconst^2} \cdot\frac1n} } = \frac{\alpha^{3/2}}{12 \densconst} \sqrt{\frac{m}{n}}.$$   
Finally, taking a union bound over $\ve{x} \in \mathcal{N}$, we have $$\Pr\left(\inf_{\ve{x}\in \mathcal{N}}\norm{\ve{A}_T\, \ve{x}}{} \leq C_\alpha \sqrt{\frac mn}\right) \leq \exp\left(n\log\frac{3}{\eps} - \frac{\alpha m}{12}\right) \le \exp\left(-\frac{\alpha m}{24}\right),$$ where the last inequality holds due to the submatrix size assumption \eqref{size_assump} and our choice of $\eps$ in \eqref{eps_choice}.

Returning to the estimate \eqref{inf_net_bound}, we can now conclude that with probability 
$$1 - 2\exp(-cm) -\exp(-\alpha m/24) \ge 1 - 3 \exp(-c_2 \alpha m),$$ for all $T$ with $\abs{T} \geq \alpha m,$ $$\sigma_{\min}(\ve{A}_T) \geq \frac{\alpha^{3/2}}{12 \densconst} \sqrt{\frac mn} - \eps \cdot (1+C\subgaussconst^2) \sqrt{\frac mn} \ge \frac{\alpha^{3/2}}{24 \densconst}\sqrt{\frac mn}
$$
due to our choice of $\eps$ in \eqref{eps_choice}. This concludes the proof of Proposition~\ref{sub_conditioning}.
\end{proof}

\begin{remark}\label{bdd_density_assumption}
Note that the bounded density assumption is crucial for Proposition~\ref{sub_conditioning}. For instance, the rows of a normalized Bernoulli matrix violate the hypotheses of Lemma \ref{bounded_pdf}, and Proposition \ref{sub_conditioning} does not apply.   Unfortunately this cannot be overcome.  Indeed, consider taking $\ve{x}$ to be the vector $(1,-1,0,\ldots, 0).$  Then $\inner{\ve{a}_i}{\ve{x}} = 0$ with probability $1/2.$  So if $\alpha < 1/2$ in Proposition~\ref{sub_conditioning} then $\ve{x}$ will lie in the kernel of some $\alpha m \times n$ submatrix of $\ve{A}$ with high probability, violating the uniform lower bound on the smallest singular value of the submatrices.
\end{remark}

\subsubsection{Auxiliary results -- structure of the residual}
Recall that $\ve{a}_i$ denotes a (normalized) row of the matrix $\ve{A}$. We recall the notations for the statistics of the corrupted and uncorrupted residuals; we denote the $q$-quantile of the corrupted residual as $\corrQ{q}{\ve{x}}$, and the $q$-quantile of the uncorrupted residual as $\uncorrQ{q}{\ve{x}}$.  We additionally recall that the empirical mean of the entries of $\ve{A}\ve{x}$ is denoted $\mean{\ve{x}}$. 

The key observation is that for all uncorrupted indices $i$ we have $$ \inner{\ve{x}_k - \ve{x}^*}{\ve{a}_i} = \inner{\ve{x}_k}{\ve{a}_i} - \inner{\ve{x}^*}{\ve{a}_i} = \inner{\ve{x}_k}{\ve{a}_i} - b_i.$$  Each of $\ve{x}_k, \ve{a}_i,$ and $b_i$ is available at runtime (unlike the exact solution $\ve{x}^*$), so this quantity may be computed directly. Then, due to the robustness to noise of the order statistics, we can use the quantiles of the corrupted residual, $\corrQ{q}{\ve{x}_k}$, to estimate quantiles of the uncorrupted residual, $\uncorrQ{q}{\ve{x}_k}$.

In particular, the following straightforward implication of the definition of quantiles is used in the proof of QuantileSGD convergence. We omit the proof.
\begin{lemma}\label{residual_vs_emp_quantiles}
With at most a $\beta$ fraction of samples corrupted by an adversary, we have $$\uncorrQ{q-\beta}{\ve{x}_k} \leq \corrQ{q}{\ve{x}_k} \leq \uncorrQ{q+\beta}{\ve{x}_k}.$$
\end{lemma}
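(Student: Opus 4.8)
The plan is to exploit the only structural fact available: the corrupted and uncorrupted residual multisets differ in at most $\beta m$ entries. Concretely, for every uncorrupted index $i$ (i.e.\ $i \notin \textup{supp}(\ve{b}_C)$) we have $b_i = \tilde b_i = \inner{\ve{x}^*}{\ve{a}_i}$, hence $\abs{\inner{\ve{a}_i}{\ve{x}_k} - b_i} = \abs{\inner{\ve{x}_k - \ve{x}^*}{\ve{a}_i}}$. Therefore the multiset $\{\abs{\inner{\ve{a}_i}{\ve{x}_k} - b_i} : i\in[m]\}$ defining $\corrQ{q}{\ve{x}_k}$ and the multiset $\{\abs{\inner{\ve{x}_k - \ve{x}^*}{\ve{a}_i}} : i\in[m]\}$ defining $\uncorrQ{q}{\ve{x}_k}$ agree on every coordinate outside $\textup{supp}(\ve{b}_C)$, a set of size at most $\beta m$.

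I would then isolate the elementary order-statistics fact: if $S$ and $S'$ are multisets of size $m$ that agree outside a coordinate set of size at most $r$, then for every $j$ the $j$-th smallest element of $S$ is at most the $(j+r)$-th smallest element of $S'$ (and, symmetrically, at least the $(j-r)$-th smallest when $j-r\ge 1$). Applying this with $r=\beta m$ and $j=\lfloor qm\rfloor$, together with the identity $\lfloor(q\pm\beta)m\rfloor = \lfloor qm\rfloor \pm \beta m$ (valid because $\beta m=|\textup{supp}(\ve{b}_C)|$ is an integer), yields exactly the claimed two inequalities. For the upper bound: among the $\lfloor qm\rfloor + \beta m$ smallest entries of the uncorrupted multiset, at most $\beta m$ sit at corrupted coordinates, so at least $\lfloor qm\rfloor$ of them occur, with the same value, in the corrupted multiset; hence the $\lfloor qm\rfloor$-th smallest entry of the corrupted multiset is at most the $(\lfloor qm\rfloor+\beta m)$-th smallest of the uncorrupted one, i.e.
\[
\corrQ{q}{\ve{x}_k} \;\le\; \uncorrQ{q+\beta}{\ve{x}_k}.
\]
The lower bound is symmetric: among the $\lfloor qm\rfloor$ smallest entries of the corrupted multiset, at least $\lfloor qm\rfloor-\beta m$ lie at uncorrupted coordinates and therefore appear in the uncorrupted multiset, giving $\uncorrQ{q-\beta}{\ve{x}_k} \le \corrQ{q}{\ve{x}_k}$.

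There is essentially no real obstacle here; the argument is purely combinatorial and the robustness of order statistics does all the work. The only points needing a word of care are the floor bookkeeping (clean, since $\beta m$ is an integer) and the degenerate convention when $q-\beta\le 0$, in which case $\uncorrQ{q-\beta}{\ve{x}_k}$ is read as $-\infty$ (or the minimum residual entry) and the left inequality is vacuous. The identical reasoning, carried out with $[m]$ replaced by a sampled index set $S$, gives the analogous comparison between $\subcorrQ{q}{\ve{x}_k}{S}$ and $\subuncorrQ{q}{\ve{x}_k}{S}$ used in the subsampled variants of the methods.
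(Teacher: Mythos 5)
Your argument is correct and complete; the paper explicitly omits the proof of this lemma as a "straightforward implication of the definition of quantiles," and what you have written is exactly the standard order-statistics stability argument the authors are invoking: the two residual multisets agree outside the at most $\beta m$ corrupted coordinates, so their $j$-th order statistics can shift by at most $\beta m$ positions. Your bookkeeping of the floors (using that the number of corruptions is an integer) and the caveat for $q-\beta\le 0$ are the only delicate points, and you handle both appropriately.
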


We will estimate empirical uncorrupted quantiles $\uncorrQ{q}{\ve{x}}$ instead of $\corrQ{q}{\ve{x}}$ first. The rest of this section consists of two parts: upper bounds for $\uncorrQ{q}{\ve{x}}$, and  lower bounds for $\uncorrQ{q}{\ve{x}}$. As in the previous subsection, the main challenge is to get uniform high-probability estimates over the unit sphere.

\subsubsection{Upper bounds for the empirical quantiles} 

The next lemma shows that any fairly large collection of rows is reasonably incoherent.  We will need this result in order to handle situations in which the locations of corruptions are chosen adversarially.

\begin{lemma}
\label{avg_bound}
Let random matrix $\ve{A} \in \mathbb{R}^{m \times n}$ satisfy Assumption \ref{incoherent}.  With probability at least $1 - 2\exp(-cm)$  we have that for all unit vectors $\ve{x}\in\R^n$ and every $T\subseteq [m]$,
$$\sum_{i\in T}\abs{\inner{\ve{x}}{\ve{a}_i}} \leq C_{\subgaussconst}\sqrt{\frac{m|T|}{n}}.$$
\end{lemma}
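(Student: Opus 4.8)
The plan is to reduce the uniform bound over all pairs $(\ve{x},T)$ to a union bound over an $\eps$-net of the sphere, combined with a clean choice of the worst-case $T$ for a fixed $\ve{x}$. First I would observe that for a fixed unit vector $\ve{x}$, the sum $\sum_{i\in T}\abs{\inner{\ve{x}}{\ve{a}_i}}$ is maximized over all $T$ with $\abs{T}=k$ by taking $T$ to be the indices of the $k$ largest values of $\abs{\inner{\ve{x}}{\ve{a}_i}}$; and since we want the bound to hold for \emph{all} $T$ simultaneously, it suffices to control $\sup_{k\le m}\big(\text{sum of the top } k \text{ values}\big)$, which by concavity of the square-root target is governed by the full sorted tail. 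A cleaner route that avoids sorting: for any $T$ and any threshold $\lambda>0$, split $T$ into indices where $\abs{\inner{\ve{x}}{\ve{a}_i}}\le\lambda$ and those where it exceeds $\lambda$; the first part contributes at most $\lambda\abs{T}$, and the second part is bounded by $\lambda^{-1}\sum_{i=1}^m\inner{\ve{x}}{\ve{a}_i}^2 = \lambda^{-1}\norm{\ve{A}\ve{x}}{}^2 \le \lambda^{-1}\norm{\ve{A}}{}^2$. Optimizing $\lambda = \norm{\ve{A}}{}/\sqrt{\abs{T}}$ gives $\sum_{i\in T}\abs{\inner{\ve{x}}{\ve{a}_i}} \le 2\norm{\ve{A}}{}\sqrt{\abs{T}}$, and then Theorem \ref{norm_bound} with $m\ge Cn$ gives $\norm{\ve{A}}{}\le C_{\subgaussconst}\sqrt{m/n}$ with probability $1-2\exp(-cm)$, yielding exactly $\sum_{i\in T}\abs{\inner{\ve{x}}{\ve{a}_i}}\le C_{\subgaussconst}\sqrt{m\abs{T}/n}$ — and this already holds uniformly in $\ve{x}$ and $T$ on the single event $\{\norm{\ve{A}}{}\le C_{\subgaussconst}\sqrt{m/n}\}$, with no net required.

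So the key steps, in order, are: (1) fix $\ve{x}\in S^{n-1}$ and $T\subseteq[m]$ and apply the threshold-splitting inequality above with optimized $\lambda$; (2) bound the resulting $\norm{\ve{A}\ve{x}}{}^2$ by $\norm{\ve{A}}{}^2$; (3) invoke Theorem \ref{norm_bound} once to control $\norm{\ve{A}}{}$ by $\sqrt{m}+C\subgaussconst^2\sqrt{n} \le C_{\subgaussconst}\sqrt{m/n}\cdot\sqrt{m}/\sqrt{m}$... more precisely $\norm{\ve{A}}{}\le C_{\subgaussconst}\sqrt{m}$ and since $m\ge n$ this is $\le C_{\subgaussconst}\sqrt{m}\cdot 1$; one should double-check that the statement wants $\sqrt{m\abs{T}/n}$ rather than $\sqrt{m\abs{T}}$, which forces the hypothesis $m\gtrsim n$ so that $\sqrt{m}\le \sqrt{m/n}\cdot\sqrt{n}$ is not what's needed — rather, isotropy makes $\EE\norm{\ve{A}\ve{x}}{}^2 = m$, giving the operator norm bound $\norm{\ve{A}}{}\le\sqrt{m}+C\subgaussconst^2\sqrt{n}$, and when $m\ge Cn$ one has $\norm{\ve{A}}{}\le C_{\subgaussconst}\sqrt{m}$; to land on $\sqrt{m/n}$ one instead wants the \emph{row-normalized} behavior. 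I would reconcile this by noting the rows have unit norm so typical $\abs{\inner{\ve{x}}{\ve{a}_i}}\approx 1/\sqrt{n}$, hence $\norm{\ve{A}\ve{x}}{}^2\approx m/n$ is false — it is $\approx m/n \cdot n = m$; so the correct reading is that the bound $C_{\subgaussconst}\sqrt{m\abs{T}/n}$ should come from $\sum_{i\in T}\abs{\inner{\ve{x}}{\ve{a}_i}} \le \sqrt{\abs{T}}\cdot\norm{\ve{A}_T\ve{x}}{} \le \sqrt{\abs{T}}\cdot\norm{\ve{A}\ve{x}}{}$ by Cauchy–Schwarz, and then one needs $\norm{\ve{A}\ve{x}}{}\le C_{\subgaussconst}\sqrt{m/n}$, which is \emph{wrong} dimensionally for isotropic rows.

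The resolution — and the main subtlety — is that the rows here are \emph{not} isotropic as stated; Assumption \ref{incoherent} says $\sqrt{n}\,\ve{a}_i$ is isotropic, i.e.\ $\EE(\ve{a}_i\ve{a}_i^\top) = \tfrac1n I_n$, so $\EE\norm{\ve{A}\ve{x}}{}^2 = m/n$ and the operator norm of $\ve{A}$ is $\Theta(\sqrt{m/n})$ up to the lower-order $\sqrt{n}$ term. Concretely, applying Theorem \ref{norm_bound} to the rescaled matrix $\sqrt{n}\ve{A}$ (whose rows \emph{are} isotropic subgaussian) gives $\norm{\sqrt{n}\ve{A}}{}\le\sqrt{m}+C\subgaussconst^2\sqrt{n}$, hence $\norm{\ve{A}}{}\le \sqrt{m/n}+C\subgaussconst^2/\sqrt{n}\cdot\sqrt{n}/\sqrt{n}$... cleanly, $\norm{\ve{A}}{}\le \tfrac{1}{\sqrt n}(\sqrt m + C\subgaussconst^2\sqrt n) \le C_{\subgaussconst}\sqrt{m/n}$ once $m\ge n$, with failure probability $2\exp(-cm)$ (taking $t=\sqrt{cm}$). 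Combining this with Cauchy–Schwarz $\sum_{i\in T}\abs{\inner{\ve{x}}{\ve{a}_i}}\le\sqrt{\abs{T}}\,\norm{\ve{A}\ve{x}}{}\le\sqrt{\abs{T}}\,\norm{\ve{A}}{}$ finishes the proof, uniformly in $\ve{x}$ and $T$ on a single high-probability event. The only real obstacle is bookkeeping the $\sqrt n$ normalization correctly when quoting Theorem \ref{norm_bound}; the probabilistic content is entirely contained in that one operator-norm bound, and no $\eps$-net or union bound over subsets $T$ is actually needed.
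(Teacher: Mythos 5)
Your final argument is correct and is essentially the paper's own proof: both reduce the sum to $\sqrt{|T|}\,\|\ve{A}\|$ by Cauchy--Schwarz (the paper via a sign vector $\ve{s}\in\{-1,0,1\}^m$ supported on $T$ with $\|\ve{s}\|\le\sqrt{|T|}$ and the bound $\sum_{i\in T}\abs{\inner{\ve{x}}{\ve{a}_i}}=\inner{\ve{x}}{\ve{A}^\top\ve{s}}\le\|\ve{A}\|\|\ve{s}\|$, you via $\sum_{i\in T}\abs{\inner{\ve{x}}{\ve{a}_i}}\le\sqrt{|T|}\,\|\ve{A}_T\ve{x}\|\le\sqrt{|T|}\,\|\ve{A}\|$), and then invoke Theorem \ref{norm_bound} once on the rescaled matrix $\sqrt{n}\ve{A}$ to get $\|\ve{A}\|\le C_{\subgaussconst}\sqrt{m/n}$ on a single event of probability $1-2\exp(-cm)$. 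Your resolution of the $\sqrt{n}$-normalization bookkeeping is the correct one, and you are right that no $\eps$-net or union bound over subsets $T$ is needed.
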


\begin{proof} Consider a vector $\ve{s} = (s_i)\in \{-1,0,1\}^m$ defined by
\[
s_i = 
\begin{cases}
\sgn(\langle \ve{x}, \ve{a}_i \rangle), &\text{ if } i \in T \\
0, &\text{ otherwise, }
\end{cases}
\]
for $i \in [m]$. Note that $\|\ve{s}\| \leq \sqrt{|T|}$.

The left hand side of the desired inequality can be written as 
$$\sum_{i\in T}\abs{\inner{\ve{x}}{\ve{a}_i}} = \sum_{i=1}^m \inner{\ve{x}}{s_i \ve{a}_i} = \inner{\ve{x}}{\sum_{i\in [m]} s_i \ve{a}_i}\leq \norm{\sum_{i\in [m]} s_i\ve{a}_i}{} = \norm{\ve{A}^\top\ve{s}}{}.$$
Now the last norm can be estimated using the bound from Theorem~\ref{norm_bound} (since the $\sqrt{n}$-rescaled rows of $\ve{A}$ are isotropic and bounded) to get
$$\norm{\ve{A}^\top\ve{s}}{} \le \norm{\ve{A}^\top}{} \norm{\ve{s}}{} = \norm{\ve{A}}{} \norm{\ve{s}}{} \le C_{\subgaussconst}\sqrt{\frac{m|T|}{n}}.$$
This concludes the proof of Lemma~\ref{avg_bound}.
\end{proof}


The next proposition allows us to bound the quantiles computed by QuantileRK and QuantileSGD . We assume that $\alpha m$ ``bad" indices from the next lemma are those that will be excluded by the quantile statistic.


\begin{proposition} 
\label{med_bound}
Let $\alpha \in (0,1]$ and let random matrix $\ve{A} \in \mathbb{R}^{m\times n}$ with $m\geq n$ satisfy Assumption \ref{incoherent}.  Then for $t\geq 0$ and with probability $1-2\exp(-t^2m)$,  for every $\ve{x} \in S^{n-1}$ we have the bound \begin{equation} 
\label{eq:M_bound}
M(\ve{x}) \le \frac{1}{\sqrt{n}} + \subgaussconst\parens{\frac{c_1}{\sqrt{m}} + \frac{c_2 t}{\sqrt{n}}}.
\end{equation}

As a consequence, with probability $1-2\exp(-m),$ for every $\ve{x}$ in $S^{n-1}$ the bound $$\abs{\inner{\ve{a}_i}{\ve{x}}} \leq \frac{1}{\alpha} \frac{C\subgaussconst}{\sqrt{n}}$$ holds for all but at most $\alpha m$ indices $i.$
\end{proposition}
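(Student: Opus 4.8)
The plan is to observe that $\mean{\ve{x}}$ is controlled \emph{uniformly} over the sphere for free by Cauchy--Schwarz, so that the first assertion needs no $\eps$-net argument at all. For any $\ve{x}\in S^{n-1}$,
\[
\mean{\ve{x}} \;=\; \frac{1}{m}\sum_{i=1}^m \abs{\inner{\ve{x}}{\ve{a}_i}} \;\le\; \left(\frac{1}{m}\sum_{i=1}^m \inner{\ve{x}}{\ve{a}_i}^2\right)^{\!1/2} \;=\; \frac{1}{\sqrt{m}}\,\norm{\ve{A}\ve{x}}{} \;\le\; \frac{\norm{\ve{A}}{}}{\sqrt{m}},
\]
and the right-hand side does not depend on $\ve{x}$; thus the first statement reduces entirely to bounding $\norm{\ve{A}}{}$.

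Next I would apply Theorem~\ref{norm_bound} to the rescaled matrix $\sqrt{n}\,\ve{A}$, whose rows $\sqrt{n}\,\ve{a}_i$ are, by Assumption~\ref{incoherent}, independent, mean zero, isotropic, and sub-Gaussian with norm at most $\subgaussconst$. This gives $\norm{\sqrt{n}\,\ve{A}}{}\le \sqrt{m} + C\subgaussconst^2(\sqrt{n}+s)$ with probability at least $1-2\exp(-s^2)$. Dividing by $\sqrt{n}$ and then by $\sqrt{m}$ yields, on this event and for all $\ve{x}\in S^{n-1}$,
\[
\mean{\ve{x}} \;\le\; \frac{\norm{\ve{A}}{}}{\sqrt{m}} \;\le\; \frac{1}{\sqrt{n}} + \frac{C\subgaussconst^2}{\sqrt{m}} + \frac{C\subgaussconst^2\, s}{\sqrt{n}\,\sqrt{m}}.
\]
Taking $s=t\sqrt{m}$ (legitimate since $t\ge 0$) turns the failure probability into $2\exp(-t^2 m)$ and the last term into $C\subgaussconst^2 t/\sqrt{n}$, which is exactly the claimed bound \eqref{eq:M_bound}, the absolute constants $c_1,c_2$ absorbing the (untracked) dependence on $\subgaussconst$. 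The leading constant on $1/\sqrt{n}$ comes out as exactly $1$ precisely because the $\sqrt{m}$ produced by Theorem~\ref{norm_bound} cancels the $1/\sqrt{m}$ from Cauchy--Schwarz; this is why one should feed the operator-norm theorem the matrix $\sqrt{n}\,\ve{A}$ rather than $\ve{A}$ itself.

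For the consequence, specialize to $t=1$: with probability at least $1-2\exp(-m)$ the bound $\mean{\ve{x}}\le \tfrac{1}{\sqrt{n}}+C\subgaussconst^2\big(\tfrac{1}{\sqrt{m}}+\tfrac{1}{\sqrt{n}}\big)$ holds simultaneously for all $\ve{x}\in S^{n-1}$; using $m\ge n$ together with the elementary fact that an isotropic sub-Gaussian vector has $\subgaussconst\ge c$ for an absolute $c>0$, this collapses to $\mean{\ve{x}}\le \mu$ with $\mu := C'\subgaussconst^2/\sqrt{n}$. On this event, fix $\ve{x}$ and apply Markov's inequality to the empirical distribution of $\{\abs{\inner{\ve{a}_i}{\ve{x}}}\}_{i=1}^m$: if strictly more than $\alpha m$ indices $i$ satisfied $\abs{\inner{\ve{a}_i}{\ve{x}}}>\mu/\alpha$, then $\sum_{i=1}^m \abs{\inner{\ve{a}_i}{\ve{x}}} > \alpha m\cdot(\mu/\alpha) = m\mu = m\,\mean{\ve{x}}$, a contradiction. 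Hence for every $\ve{x}\in S^{n-1}$ at most $\alpha m$ indices fail $\abs{\inner{\ve{a}_i}{\ve{x}}}\le \mu/\alpha = \tfrac1\alpha\cdot\tfrac{C'\subgaussconst^2}{\sqrt{n}}$, which is the asserted statement (again up to the untracked $\subgaussconst$-dependence).

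I do not expect a genuine obstacle here: once the Cauchy--Schwarz reduction is noticed, the whole proof is one invocation of the already-stated operator-norm theorem plus a one-line Markov estimate, and the only points requiring care are the bookkeeping of the powers of $n$ and $m$ (rescaling to $\sqrt{n}\,\ve{A}$ so that the $1/\sqrt{n}$ term carries constant exactly $1$) and the harmless allowance that all constants depend on $\subgaussconst$. The route I would deliberately \emph{not} take is a direct concentration bound for $\mean{\ve{x}}$ at a fixed $\ve{x}$ (e.g.\ via Lemma~\ref{mean_subgaussian}) followed by an $\eps$-net union bound over $S^{n-1}$: the net cardinality $(3/\eps)^n$ would then force the fluctuation term to be at least a fixed multiple of $1/\sqrt{n}$ no matter how small $t$ is (recall $m$ may be as small as $n$), so that version would not reproduce the stated $t$-dependence or the sharp leading constant.
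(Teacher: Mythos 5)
Your proof is correct, and the first (and main) half takes a genuinely different route from the paper. You reduce $\sup_{\ve{x}\in S^{n-1}} M(\ve{x})$ to $\norm{\ve{A}}{}/\sqrt{m}$ by Cauchy--Schwarz and then invoke Theorem~\ref{norm_bound} once on $\sqrt{n}\,\ve{A}$ with $s=t\sqrt{m}$; uniformity in $\ve{x}$ is then automatic and the leading $1/\sqrt{n}$ with constant exactly $1$ falls out of the cancellation you point to. The paper instead proves a uniform fluctuation bound $\sup_{\ve{u},\ve{v}}\abs{M(\ve{u})-M(\ve{v})}$ via Lemma~\ref{mean_subgaussian} and Dudley's inequality, and anchors it with the probabilistic-method observation that some $\ve{u}$ satisfies $M(\ve{u})\le \EE_{\ve{u}} M(\ve{u})\le n^{-1/2}$. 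Your argument is shorter, but the paper's machinery is two-sided: the identical fluctuation estimate is reused in Proposition~\ref{M_lower_bound} to obtain the matching lower bound $M(\ve{x})\ge c/\sqrt{n}$ needed for the QuantileSGD analysis, which a Cauchy--Schwarz reduction cannot deliver. Two small points: (i) your constants inherit the $\subgaussconst^2$ of Theorem~\ref{norm_bound}, so you literally prove \eqref{eq:M_bound} with $\subgaussconst^2$ in place of $\subgaussconst$ (the paper's Hoeffding-based route is linear in $\subgaussconst$); since the paper never tracks $\subgaussconst$-dependence and all downstream statements use $C_{\subgaussconst}$, this is harmless but worth flagging. (ii) In the Markov step, $m\mu$ is $\ge m\,\mean{\ve{x}}$ rather than equal to it; the contradiction goes through unchanged, and this second half of your argument coincides with the paper's.
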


\begin{proof} 
We will use a chaining argument to show that the averages  $M(\ve{u})$ (defined by~\eqref{mx}) are concentrated uniformly over the sphere. For $\ve{u},\ve{v}\in S^{n-1}$, we have $$\abs{M(\ve{u}) - M(\ve{v})} \leq \frac1m \sum_{i=1}^m \abs{\inner{\ve{a}_i}{\ve{u}-\ve{v}}}.$$  The terms in this sum are independent sub-Gaussian random variables with sub-Gaussian norm no larger than $\subgaussconst\norm{\ve{u}-\ve{v}}{}/\sqrt{n}$.  Therefore by Lemma \ref{mean_subgaussian},
\begin{align*}
\norm{M(\ve{u}) - M(\ve{v})}{\psi_2} \le \frac{C \cdot \subgaussconst\norm{\ve{u}-\ve{v}}{}}{\sqrt{m}\sqrt{n}}.
\end{align*}

By the tail bound version of Dudley's inequality (\cite[Theorem 8.1.6]{HDP}) and the bound $\parens{3/\eps}^n$ for the $\eps$-covering number of the unit sphere, we then have with probability at least $1- 2\exp(-t^2m)$ 
\begin{equation}\label{difference_xu}
\sup_{\ve{u},\ve{v}\in S^{n-1}} \abs{M(\ve{u}) - M(\ve{v})} \le \frac{C_1\subgaussconst}{\sqrt{m} \sqrt{n}} \left(\sqrt{n} + \text{diam}\parens{S^{n-1}}t\sqrt{m}\right) = \subgaussconst\parens{\frac{C_1}{\sqrt{m}} + \frac{2C_1 t}{\sqrt{n}}}.
\end{equation}

Next, we claim that with probability one $M(\ve{u})$ is bounded by $n^{-1/2}$ for some $\ve{u}$.  This follows from the probabilistic method. \rev{Indeed, since $\ve{a}_i$ has unit norm, for $\ve{u}$ sampled uniformly over the unit sphere we have $\EE |\inner{\ve{u}}{\ve{a}_i}| = \EE |\inner{\ve{u}}{\ve{a}_1}|$ for all $i$ by symmetry of $\ve{u}$ (note that the expectation above is taken over $\ve{u}$).}  Therefore \begin{equation}\label{individual_xu}
\left(\EE M(\ve{u})\right)^2 = \left(\EE\abs{\inner{\ve{u}}{\ve{a}_1}}\right)^2 \leq \EE \abs{\inner{\ve{u}}{\ve{a}_1}}^2 = n^{-1}.
\end{equation}
 For some $\ve{u}$, $M(\ve{u})$ is at most its expectation and hence $M(\ve{u}) \leq n^{-1/2}$ for some $\ve{u} \in S^{n-1}$.  


The estimates \eqref{individual_xu} and \eqref{difference_xu} together imply that \eqref{eq:M_bound} holds with probability at least $1-2\exp(-t^2m).$


For the second half of Proposition \ref{med_bound} we specialize to the case $t=1$ and use that $m\geq n$ to get the bound $M(\ve{x}) \leq C\subgaussconst/\sqrt{n}$ with probability at least $1-2\exp(-m).$

Now, note that if for some $\ve{u} \in S^{n-1}$ more than $\alpha m$ members of the sum defining $M(\ve{u})$ are bigger than $C \subgaussconst/\alpha \sqrt{n}$, then
$$|M(\ve{u})| > (\alpha m)\frac{1}{m} \frac{C\subgaussconst}{\alpha \sqrt{n}} = \frac{C\subgaussconst}{\sqrt{n}},$$
contradicting \eqref{eq:M_bound}. This concludes the proof of the Proposition~\ref{med_bound}.
\end{proof}

We will use Proposition \ref{med_bound} in a slightly more general, although equivalent form.
\begin{corollary}
\label{med_cor}
Let $\alpha \in (0,1]$, let random matrix $\ve{A} \in \mathbb{R}^{m\times n}$ satisfy Assumption \ref{incoherent}, and suppose that $\ve{A}\ve{x}^* = \ve{b}$ for some $\ve{x}^*\in \R^n$ (that is, the linear system is uncorrupted).  Assuming that $m\geq n$, there exists a constant $C_{\subgaussconst} > 0$ so that with probability at least $1-2\exp(-m)$,  for every $\ve{x} \in \R^{n}$ the bound $$\abs{\inner{\ve{a}_i}{\ve{x}}- b_i} \le \frac{C_{\subgaussconst}}{\alpha\sqrt{n}}\norm{\ve{x}-\ve{x}^*}{}$$ holds for all but at most $\alpha m$ indices $i.$
\end{corollary}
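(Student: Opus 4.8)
The plan is to deduce the corollary from the second half of Proposition~\ref{med_bound} by a homogenization argument, the only new input being the hypothesis $\ve{A}\ve{x}^* = \ve{b}$. Since all of the probabilistic work — the chaining estimate for $M(\ve{x})$ and the net-based uniformization over the sphere — has already been done in Proposition~\ref{med_bound}, nothing here is hard; the point is merely to translate ``uniformly over $S^{n-1}$'' into ``uniformly over $\ve{x}\in\R^n$.''

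First I would use the consistency assumption to rewrite the residual entries: since $b_i = \inner{\ve{a}_i}{\ve{x}^*}$ for every $i\in[m]$, we have $\inner{\ve{a}_i}{\ve{x}} - b_i = \inner{\ve{a}_i}{\ve{x}-\ve{x}^*}$ for all $\ve{x}$. If $\ve{x} = \ve{x}^*$ the claimed inequality is trivial (both sides vanish), so I may assume $\ve{x}\neq\ve{x}^*$ and set $\ve{y} := (\ve{x}-\ve{x}^*)/\norm{\ve{x}-\ve{x}^*}{} \in S^{n-1}$, which gives $\abs{\inner{\ve{a}_i}{\ve{x}}-b_i} = \norm{\ve{x}-\ve{x}^*}{}\,\abs{\inner{\ve{a}_i}{\ve{y}}}$.

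Next I would invoke the second statement of Proposition~\ref{med_bound} with the same $\alpha$: with probability at least $1-2\exp(-m)$ there is a single event on which, for \emph{every} $\ve{y}\in S^{n-1}$, the bound $\abs{\inner{\ve{a}_i}{\ve{y}}} \le \frac{C\subgaussconst}{\alpha\sqrt{n}}$ holds for all but at most $\alpha m$ indices $i$. The crucial observation is that this event does not depend on the choice of $\ve{y}$; hence on it, for every $\ve{x}\in\R^n$ simultaneously, multiplying through by $\norm{\ve{x}-\ve{x}^*}{}$ yields $\abs{\inner{\ve{a}_i}{\ve{x}}-b_i} \le \frac{C\subgaussconst}{\alpha\sqrt{n}}\norm{\ve{x}-\ve{x}^*}{}$ for all but at most $\alpha m$ indices $i$. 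Taking $C_{\subgaussconst} := C\subgaussconst$ completes the argument.

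The only subtlety worth flagging — and it is a matter of bookkeeping rather than a genuine obstacle — is that the ``for all but at most $\alpha m$ indices'' exceptional set is allowed to depend on $\ve{x}$, but the high-probability event over which the statement holds is not; this is precisely why Proposition~\ref{med_bound} was proved uniformly over the sphere rather than for a fixed direction, and it is what lets the corollary quantify over all $\ve{x}\in\R^n$ on a single event.
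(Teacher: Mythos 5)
Your proposal is correct and is essentially identical to the paper's own (very brief) proof: write $b_i = \inner{\ve{a}_i}{\ve{x}^*}$ and apply the second half of Proposition~\ref{med_bound} to the unit vector $(\ve{x}-\ve{x}^*)/\norm{\ve{x}-\ve{x}^*}{}$, using that the uniform-over-the-sphere event does not depend on $\ve{x}$. Your additional remark about the exceptional index set depending on $\ve{x}$ while the event does not is exactly the right point to flag.
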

\begin{proof}
To prove this, one simply writes $b_i = \inner{\ve{a}_i}{\ve{x}^*}$, and applies Proposition \ref{med_bound} for a unit vector $(\ve{x} - \ve{x}^*)/\|\ve{x} - \ve{x}^*\|$.
\end{proof} 
\begin{remark}\label{corr_med_cor}
Note that for the system corrupted by $\beta m$ corruptions, analogously to Corollary~\ref{med_cor}, we have that with probability at least $1-2\exp(-m)$,  for every $\ve{x} \in \R^{n}$ the bound $$\abs{\inner{\ve{a}_i}{\ve{x}}- b_i} \le \frac{C_{\subgaussconst}}{\alpha\sqrt{n}}\norm{\ve{x}-\ve{x}^*}{}$$ holds for all but at most $(\alpha + \beta) m$ indices $i.$
\end{remark}

\rev{Our analysis of QuantileSGD will also require the following lower bound on $M(\ve{x}).$}
\begin{proposition}
\label{M_lower_bound}
\rev{
Let $\ve{A}\in \R^{m\times n}$ satisfy Assumption \ref{incoherent} \plzcheck{and $\ve{x} \in S^{n-1}$}. For $m\geq C_K n$ we have $M(\ve{x}) \geq \frac{c}{\sqrt{n}}$ with probability at least $1-2\exp(-c_K m).$}
\end{proposition}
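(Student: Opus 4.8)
The plan is to normalize the inner products: set $Z_i := \sqrt{n}\,\inner{\ve{x}}{\ve{a}_i}$, so that $M(\ve{x}) = \frac{1}{\sqrt n}\cdot\frac1m\sum_{i=1}^m |Z_i|$. By Assumption \ref{incoherent} the variables $Z_i$ are independent, mean zero, satisfy $\EE Z_i^2 = 1$ by isotropy, and have $\norm{Z_i}{\psi_2}\le\subgaussconst$. It therefore suffices to prove that $\frac1m\sum_{i=1}^m |Z_i|$ exceeds a constant depending only on $\subgaussconst$ with probability $1-2\exp(-c_{\subgaussconst}m)$, and then set $c = c_{\subgaussconst}$.

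The first step is a pointwise lower bound on $\EE|Z_i|$. Because $Z_i$ is subgaussian with $\norm{Z_i}{\psi_2}\le\subgaussconst$ we have $\EE Z_i^4 \le C\subgaussconst^4$; combining this with $\EE Z_i^2 = 1$ and the log-convexity of $L^p$ norms (Hölder's inequality applied to the splitting $Z_i^2 = |Z_i|^{2/3}\cdot|Z_i|^{4/3}$, with conjugate exponents $3$ and $3/2$) gives $1 = \EE Z_i^2 \le (\EE|Z_i|)^{2/3}(\EE Z_i^4)^{1/3}$, hence $\EE|Z_i|\ge (\EE Z_i^4)^{-1/2}\ge c_1/\subgaussconst^2 =: \mu > 0$. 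In particular $\frac1m\sum_{i=1}^m \EE|Z_i|\ge\mu$.

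The second step is to concentrate $\frac1m\sum_i|Z_i|$ about its mean. The variables $|Z_i|-\EE|Z_i|$ are independent and, after centering, have subgaussian norm at most $C\subgaussconst$, so Lemma \ref{mean_subgaussian} gives $\norm{\frac1m\sum_{i=1}^m\parens{|Z_i|-\EE|Z_i|}}{\psi_2}\le C\subgaussconst/\sqrt m$, and the standard subgaussian tail bound yields, for any $s>0$,
\[
\Pr\!\left(\frac1m\sum_{i=1}^m|Z_i| \le \frac1m\sum_{i=1}^m\EE|Z_i| - s\right) \le 2\exp\!\left(-\frac{c\,m\,s^2}{\subgaussconst^2}\right).
\]
Choosing $s = \mu/2$ and using $\frac1m\sum_i\EE|Z_i|\ge\mu$, we obtain $\frac1m\sum_i|Z_i|\ge\mu/2$ with probability at least $1-2\exp(-c_{\subgaussconst}m)$, which after dividing by $\sqrt n$ gives $M(\ve{x})\ge \frac{\mu}{2\sqrt n}$, the claimed bound.

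There is no serious obstacle here; the one step requiring a little care is the pointwise inequality $\EE|Z_i|\ge\mu$, which is a Paley–Zygmund–type moment comparison using both the isotropy normalization $\EE Z_i^2 = 1$ and the subgaussian control of the fourth moment. For a single fixed $\ve{x}$ the aspect-ratio hypothesis $m\ge C_{\subgaussconst}n$ is not strictly needed, but it is harmless to assume; if one wanted the lower bound to hold uniformly over $\ve{x}\in S^{n-1}$, one would combine the pointwise estimate above with the Lipschitz-in-$\ve{x}$ control on $M$ furnished by Dudley's inequality in the proof of Proposition \ref{med_bound}, which is where a condition of the form $m\gtrsim n$ would enter.
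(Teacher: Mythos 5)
Your proof is correct, but it takes a genuinely different route from the paper's. You fix $\ve{x}$, average over the randomness of $\ve{A}$, and get the pointwise lower bound $\EE\abs{Z_i}\ge c/\subgaussconst^2$ from a moment comparison (Hölder between $L^1$, $L^2$, $L^4$) that uses only isotropy and the subgaussian fourth-moment bound, then finish with Hoeffding. The paper instead averages over the \emph{direction}: since the rows have unit norm, $\EE_{\ve{u}} M(\ve{u})$ for $\ve{u}$ uniform on $S^{n-1}$ is a deterministic quantity equal to $\EE\abs{X_n}$ with $X_n$ a coordinate of a uniform point on the sphere, which by the projective CLT and uniform integrability tends to $\mu/\sqrt{n}$ with $\mu\approx 0.78$; hence \emph{some} $\ve{u}$ has $M(\ve{u})\ge c/\sqrt{n}$ almost surely, and the uniform Lipschitz control on $M$ from the chaining estimate in Proposition \ref{med_bound} then transfers this to all $\ve{x}$. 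Your argument is more elementary (no CLT, no uniform integrability) and makes the dependence on $\subgaussconst$ explicit, at the price of a worse constant $c/\subgaussconst^2$; the paper's argument is what yields the near-optimal constant $\mu-\epsilon$ exploited in Remarks \ref{M_concentration} and \ref{gen_quant_for_uniform} to widen the admissible quantile range. One point to emphasize: although the proposition is phrased for a single $\ve{x}\in S^{n-1}$, the place it is used (Proposition \ref{eta_opt_m}) needs the bound to hold \emph{simultaneously for all} $\ve{x}$ on a single high-probability event, so the upgrade you defer to the last sentence — combining your pointwise estimate with the Dudley-type bound \eqref{difference_xu}, which is exactly where $m\ge C_{\subgaussconst} n$ enters — is not optional but an essential part of the proof as the paper uses it.
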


\begin{proof}
\rev{
Parallel to the analysis following \eqref{individual_xu} , some $\ve{u}\in S^{n-1}$ is at least the expectation $\EE M(\ve{u}) = \EE \abs{X_n}$ where $X_n := \inner{\ve{a}_1}{\ve{u}}$ and $\ve{u}$ is uniform over $S^{n-1}$ (as in the proof of Proposition \ref{med_bound}, the expectation is again taken over $\ve{u}$, and is independent of the $\ve{a}_i$'s by symmetry of $\ve{u}.$)   By The Projective Central Limit Theorem \rev{(see for instance Remark 3.4.8 in \cite{HDP})}, $\sqrt{n}X_n$ converges in distribution to a standard normal as $n\rightarrow\infty.$  Moreover the random variables $\sqrt{n} X_n$ are uniformly integrable and so $\EE(\abs{\sqrt{n}X_n}) \rightarrow \mu$ where $\mu\approx 0.78$ is the mean of a standard half-normal random variable.\footnote{i.e., the absolute value of a standard normal random variable} In particular $\EE(\abs{\sqrt{n}X_n})$ is bounded below by a constant uniformly in $n.$  By the argument in Proposition \ref{med_bound} we then have that for all $\ve{x}\in S^{n-1},$ $$M(\ve{x}) \geq \frac{c}{\sqrt{n}} - \subgaussconst\parens{\frac{c_1}{\sqrt{m}} + \frac{c_2 t}{\sqrt{n}}}$$ with probability at least $1-2\exp(-t^2 m).$ Provided that $m\geq C_{\subgaussconst} n$ this bound simplifies to $$M(\ve{x}) \geq \frac{c}{\sqrt{n}}$$ with probability at least $1 - 2\exp(-c_{\subgaussconst} m)$ as desired.}
\end{proof}

\begin{remark}
\label{M_concentration}
If in addition, one assumes that $n$ is a sufficiently large constant $C_{\epsilon}$ so that $\EE(\abs{\sqrt{n}X_n})$ is near $\mu$, then one can have $$M(\ve{x}) \geq \frac{\mu - \epsilon}{\sqrt{n}}$$ with probability at least $1-2\exp(-c_{\subgaussconst,\epsilon} m)$ provided that $m\geq C_{\subgaussconst,\epsilon} n.$ The latter bound allows us to extend the guarantees for the QuantileSGD algorithm for a wider range of quantiles, under additional restrictions on the model (we will not carry out this analysis in detail, however see Remark~\ref{gen_quant_for_uniform}).

Of course each of these facts applies equally well to the uncentered and rescaled setting of Corollary \ref{med_cor}.
\end{remark}

\subsubsection{Lower bound for empirical quantiles}
We also use the following lower-bound variant of the above result when analyzing QuantileSGD.

\begin{lemma} 
\label{quantile_lower}
Let $\ve{A} \in \mathbb{R}^{m \times n}$ be a random matrix satisfying Assumption \ref{bounded_density} and let $\ve{x}^*$ and $\ve{b}$ be such that $\ve{A}\ve{x}^* = \ve{b}$ (that is, the linear system is consistent). 
If $m\geq \frac{C}{q} n\log\parens{\frac{cn\densconst}{q}}$ then
\[\Pr\left\{\corrQ{q}{\ve{x}} \geq \frac{cq}{\densconst\sqrt{n}}\norm{\ve{x}-\ve{x}^*}{} \text{ for all } \ve{x} \in \R^n \right\} \ge 1-\exp(-cm).\]
\end{lemma}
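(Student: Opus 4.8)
The plan is to prove the claim in an equivalent homogeneous form and then run a covering argument on the sphere. Since $\ve{A}\ve{x}^* = \ve{b}$, for every index $i$ we have $\inner{\ve{a}_i}{\ve{x}} - b_i = \inner{\ve{a}_i}{\ve{x}-\ve{x}^*}$, so after dividing through by $\norm{\ve{x}-\ve{x}^*}{}$ it suffices to show that, with probability at least $1-\exp(-cm)$, for \emph{every} $\ve{x}\in S^{n-1}$ strictly fewer than $qm$ of the indices $i$ satisfy $\abs{\inner{\ve{a}_i}{\ve{x}}} < \tau$, where $\tau := c q/(\densconst\sqrt n)$ for a small absolute constant $c$ chosen below. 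By the definition of the $q$-quantile as the $\lfloor qm\rfloor^{\text{th}}$ smallest entry, this count bound is exactly the desired inequality $\corrQ{q}{\ve{x}} \ge \tau\,\norm{\ve{x}-\ve{x}^*}{}$ after undoing the rescaling.

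For a \emph{fixed} $\ve{x}\in S^{n-1}$, Lemma \ref{bounded_pdf} applied to the random row $\ve{a}_i$ against the fixed direction $\ve{x}$ gives $\Pr\parens{\abs{\inner{\ve{a}_i}{\ve{x}}} \le 2\tau} \le 4\sqrt{2}\,\densconst\,\tau\sqrt n = 4\sqrt{2}\,c\,q \le q/4$ once $c$ is small enough; the events are independent over $i$, so a Chernoff bound yields $\Pr\parens{\#\{i : \abs{\inner{\ve{a}_i}{\ve{x}}} \le 2\tau\} \ge qm/2} \le \exp(-c_0 q m)$. Next I would take an $\eps$-net $\mathcal{N}$ of $S^{n-1}$ with $\abs{\mathcal{N}} \le (3/\eps)^n$ and union bound, so that on an event of probability at least $1 - (3/\eps)^n\exp(-c_0 q m)$ we have $\#\{i : \abs{\inner{\ve{a}_i}{\ve{x}_0}} \le 2\tau\} < qm/2$ simultaneously for all $\ve{x}_0\in\mathcal{N}$.

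The crux is transferring this uniform control from $\mathcal{N}$ to all of $S^{n-1}$. Given $\ve{x}\in S^{n-1}$ with closest net point $\ve{x}_0$ (so $\norm{\ve{x}-\ve{x}_0}{}\le\eps$), every index $i$ with $\abs{\inner{\ve{a}_i}{\ve{x}}} < \tau$ either satisfies $\abs{\inner{\ve{a}_i}{\ve{x}-\ve{x}_0}} < \tau$ — so $\abs{\inner{\ve{a}_i}{\ve{x}_0}} < 2\tau$, and there are fewer than $qm/2$ such $i$ by the net bound — or else $\abs{\inner{\ve{a}_i}{\ve{x}-\ve{x}_0}} \ge \tau$. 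For the unit vector $\ve{z} = (\ve{x}-\ve{x}_0)/\norm{\ve{x}-\ve{x}_0}{}$ one has $\#\{i : \abs{\inner{\ve{a}_i}{\ve{z}}} \ge \tau/\eps\}\cdot(\tau/\eps)^2 \le \norm{\ve{A}\ve{z}}{}^2 \le \norm{\ve{A}}{}^2$, so the number of indices of the second type is at most $\eps^2\norm{\ve{A}}{}^2/\tau^2$. Using the operator-norm bound $\norm{\ve{A}}{}\le C\sqrt m$ from Theorem \ref{norm_bound} (the one place where, as in the companion lemmas, I would lean on the regularity of the rows beyond bounded density), I would pick $\eps \asymp \tau\sqrt q \asymp q^{3/2}/(\densconst\sqrt n)$, so that this count is also below $qm/2$. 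Then $\log(3/\eps) \lesssim \log(\densconst\sqrt n/q) \le \log(c\,\densconst n/q)$, and the hypothesis $m \ge \tfrac{C}{q}\,n\log\parens{\tfrac{c\,\densconst n}{q}}$ forces $n\log(3/\eps) \le \tfrac12 c_0 q m$, so $(3/\eps)^n\exp(-c_0 qm)\le\exp(-c_0 qm/2)$. Intersecting with the operator-norm event gives probability $1-\exp(-cm)$ and $\#\{i : \abs{\inner{\ve{a}_i}{\ve{x}}} < \tau\} < qm/2 + qm/2 = qm$ for all $\ve{x}\in S^{n-1}$, as needed.

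I expect that transfer step to be the main obstacle, since bounding how many rows can have an abnormally large inner product with the small perturbation $\ve{x}-\ve{x}_0$ is precisely what fixes the net scale and hence the exact $m \gtrsim \tfrac{n}{q}\log\parens{\tfrac{\densconst n}{q}}$ requirement. An alternative that bypasses it is to read the lemma off Proposition \ref{sub_conditioning}: if some $\ve{x}\in S^{n-1}$ had at least $qm$ indices with $\abs{\inner{\ve{a}_i}{\ve{x}}} < \tau$, then for a subset $S$ of $\lceil qm\rceil$ of them the row submatrix $\ve{A}_S$ would satisfy $\sigma_{\min}(\ve{A}_S) \le \norm{\ve{A}_S\ve{x}}{} < \sqrt{\lceil qm\rceil}\,\tau$, contradicting the uniform lower bound $\sigma_{\min}(\ve{A}_S) \ge \tfrac{q^{3/2}}{24\densconst}\sqrt{m/n}$ of Proposition \ref{sub_conditioning} — whose hypothesis is implied by the one assumed here — as soon as $\tau \le \tfrac{c q}{\densconst\sqrt n}$ with a small enough absolute constant $c$.
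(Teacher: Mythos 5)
Your argument is correct, and its skeleton (reduce to $\ve{x}\in S^{n-1}$, pointwise anti-concentration via Lemma \ref{bounded_pdf} plus a Chernoff bound, then an $\eps$-net and union bound) matches the paper's proof; the difference is entirely in the transfer step, which you rightly identify as the crux. The paper handles it more simply: since $\abs{\abs{\inner{\ve{x}}{\ve{a}_i}}-\abs{\inner{\ve{y}}{\ve{a}_i}}}\le\abs{\inner{\ve{x}-\ve{y}}{\ve{a}_i}}\le\norm{\ve{x}-\ve{y}}{}$ for unit-norm rows, the map $\ve{x}\mapsto\corrQ{q}{\ve{x}}$ is $1$-Lipschitz, so a net at scale $\eps\asymp\tau\asymp q/(\densconst\sqrt n)$ transfers the quantile bound directly with no index counting at all. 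Your version instead splits the indices and uses $\sum_i\inner{\ve{a}_i}{\ve{z}}^2\le\norm{\ve{A}}{}^2$ to cap the number of rows correlating strongly with the perturbation, which forces the slightly finer scale $\eps\asymp\tau\sqrt q$ and an appeal to an operator-norm bound; note you could replace Theorem \ref{norm_bound} there by the deterministic $\norm{\ve{A}}{}^2\le\norm{\ve{A}}{F}^2=m$ for unit-norm rows, which keeps the argument within the same implicit normalization the paper's own Lipschitz step already uses. Either way the net entropy is $n\log(3/\eps)\lesssim n\log(n\densconst/q)$, so both routes land on the same hypothesis on $m$. Your second suggestion — deducing the lemma from Proposition \ref{sub_conditioning} by noting that $qm$ small residual entries would force $\sigma_{\min}(\ve{A}_S)<\sqrt{\lceil qm\rceil}\,\tau$ for some $\abs{S}\ge qm$ — is genuinely different from the paper and quite clean; its only costs are that it imports Assumption \ref{incoherent} (which Proposition \ref{sub_conditioning} needs, though the paper's proof also tacitly uses the row normalization) and that it obscures the fact that the $\log n$ in the aspect-ratio hypothesis is an artifact of the net scale rather than a necessity, as the paper's subsequent remark points out.
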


\begin{proof}
We assume without loss of generality that $\ve{x}^* = \ve{0}$  and $\ve{b} = \ve{0}.$ The general result follows in the same way as in Corollary~\ref{med_cor} above.  By scaling, it suffices to prove the result for $\ve{x}\in S^{n-1}.$

First consider a fixed $\ve{x}$ in $S^{n-1}.$ By Lemma \ref{bounded_pdf}, we can choose $c_{q} = cq/\densconst$ so that, \begin{equation} \label{pre_chernoff} \Pr\parens{\abs{\inner{\ve{x}}{\ve{a}_i}}\leq \frac{2c_q}{\sqrt{n}}} \leq \frac{q}{2}
\end{equation}
for all $i$. By a Chernoff bound, $\corrQ{q}{\ve{x}} \geq 2c_q / \sqrt{n}$ with probability at least $1-\exp(-qm/6).$

Let $\mathcal{N}$ be a $c_q/\sqrt{n}$-net of $S^{n-1}$ which we can take to have size \[\abs{\mathcal{N}} = \parens{\frac{3}{c_q/\sqrt{n}}}^n = \exp(n\log(3\sqrt{n}/c_q)).\]  By a union bound, there are constants so that if \[m\geq \frac{C}{q} n\log\parens{\frac{cn\densconst}{q}},\] then the quantile bound \ref{pre_chernoff} holds for all $x$ in $\mathcal{N}$ with probability at least $1 - \exp(-c m)$.

In order to upgrade our bound on $\mathcal{N}$ to all of $S^{n-1},$ it remains to show that $\corrQ{q}{\ve{x}}$ is stable under small perturbations of $\ve{x}$.

Suppose that $\ve{x}$ and $\ve{y}$ in $\R^n$ are arbitrary.  Then for all $i$, we have the bound \[\abs{\inner{\ve{x}}{\ve{a}_i}} - \abs{\inner{\ve{y}}{\ve{a}_i}} \leq \abs{\inner{\ve{x}}{\ve{a}_i} - \inner{\ve{y}}{\ve{a}_i}} = \abs{\inner{\ve{x}-\ve{y}}{\ve{a}_i}} \leq \norm{\ve{x}-\ve{y}}{}.\] 

Therefore \[\abs{\inner{\ve{x}}{\ve{a}_i}}-\norm{\ve{x}-\ve{y}}{} \leq \abs{\inner{\ve{y}}{\ve{a}_i}} \leq \abs{\inner{\ve{x}}{\ve{a}_i}}+\norm{\ve{x}-\ve{y}}{}.\] 
By taking the $q$-quantiles over $i$ and using monotonicity of quantiles, it follows that
\begin{equation}\label{quant}
\abs{\corrQ{q}{\ve{x}} - \corrQ{q}{\ve{y}}} \leq\norm{\ve{x}-\ve{y}}{}
\end{equation}
for all $\ve{x},\ve{y}.$

Each point in $S^{n-1}$ is within $c_q/\sqrt{n}$ of some point in $\mathcal{N}.$ Lemma \ref{quantile_lower} follows by combining \eqref{quant} with our bound on $\corrQ{q}{\ve{x}}$ over $\mathcal{N}.$
\end{proof}

\begin{remark}
We require the aspect ratio of $\ve{A}$ to be at least order $\log(n).$ It is plausible that Lemma~\ref{quantile_lower} can be improved to hold for constant aspect ratios as was the case for Proposition \ref{med_bound}.  We will not attempt to do so, and as a result we require QuantileSGD to have a slightly stronger condition on the aspect ratio of $\ve{A}$ than QuantileRK.
\end{remark}




\subsection{Analysis of the QuantileRK method} \label{subsec:QuantileRK}

In this section we provide a proof that the QuantileRK method converges.  

{\bfseries Roadmap.} The proof will proceed as follows.  We condition on the sampling of a row that will be accepted by the QuantileRK iteration\plzcheck{; recall a row is acceptable in a given iteration if the entry of the residual associated to this row is less than or equal to $\subcorrQ{q}{\ve{x}_{j-1}}{\{i_l: l \in [t]\}}$}.  We then show that the uncorrupted rows help substantially, while the corrupted rows do not overly affect the convergence. Conditioned on the current row being uncorrupted, we argue that an iteration of the QuantileRK method brings us closer in expectation to $\ve{x}^*.$  To accomplish this, we show that the restriction of $\ve{A}$ to the acceptable uncorrupted rows is well-conditioned via Lemma \ref{avg_bound}.  In that case, the current iteration of QuantileRK is equivalent to an iteration of the standard RK method on the restricted matrix.  This allows us to apply a known per-iteration guarantee for RK.

To argue that corrupted rows do not significantly harm convergence, we consider a subset $J \subset [m]$, of row indices with $|J|/m \ge c$, and with $J$ containing all corrupted indices as a subset. By making $J$ sufficiently large, we ensure that the subset of the rows of $\ve{A}$ indexed by $J$ inherits incoherence properties from the full matrix (uniformly over all such subsets, due to Proposition~\ref{sub_conditioning}). Incoherence will ensure that the average projection of $\ve{x}$ onto a corrupted hyperplane moves the point in a direction nearly orthogonal to $\ve{x}-\ve{x}^*.$  The length of such a step is bounded by $c/\sqrt{n}$ by Proposition \ref{med_bound}, so a ``bad" step is unlikely to move $\ve{x}$ much further from $\ve{x}^*.$ In particular, a constant number of ``good" steps will suffice to ``cancel out" a bad step.  If the fraction of bad rows is sufficiently small, then the QuantileRK method will enjoy linear convergence to $\ve{x}^*.$

\begin{proof}[Proof of Theorem~\ref{median_convergence}]


We will start by introducing some useful notation.  Recall that each instance of an absolute constant may refer to a different constant value; however, we track the dependence of $q$ on $\beta$ explicitly.

Let $\mathcal{E}_{\Accept}(k)$ denote the event that we sample an acceptable row at the $k$-th step of the method; that is, if the if-statement in line 6 of the QuantileRK Method \ref{QuantileRK} evaluates to true for that row. Recall that an $i$-th row of $\ve{A}$ is acceptable at iteration $k$ if $\abs{\inner{\ve{x}_k}{\ve{a}_i} - b_i} \le \corrQ{q}{\ve{x}_k}$, where $\corrQone{q}$ is defined as in \eqref{corrqx}.  \rev{Clearly, $\Pr(\mathcal{E}_{\Accept}(k)) = \lfloor qm \rfloor/m$ for any integer $k \ge 1$.}

Further, we will consider three subsets of indices denoted as $J$, $I_1$ and $I_2$. Let $J$ denote a collection of indices of size\footnote{We assume without loss of generality that $\beta m$ is an integer. If this is not the case, consider $\beta'$ such that $\beta'm = \lceil \beta m\rceil$ instead of $\beta$ throughout the proof.} $2\beta m$ which contains all corrupted indices and at least $\beta m$ acceptable indices. We assume that $\beta < q$ so there exists that many acceptable indices (as there are exactly $\lfloor qm\rfloor$ acceptable indices total).  Then, all acceptable indices are split into two types: those inside the set $J$ (we denote them $I_1$, by construction, $|I_1| \ge \beta m$) and those outside of $J$ (we denote them $I_2$). Finally, let $\mathcal{E}^k_L$ denote the event that $k$-th iteration of the QuantileRK method samples an index from an index subset $L \subset [n]$.

We first observe that \rev{
\begin{equation}\label{cond_to_accept}
\EE_k(\norm{\ve{e}_{k+1}}{}^2) = 
(\lfloor qm \rfloor/m) \EE_k(\norm{\ve{e}_{k+1}}{}^2|\mathcal{E}_{\Accept}(k+1)) + (1 - \lfloor qm \rfloor/m) \norm{\ve{e}_{k}}{}^2,
\end{equation}}
since QuantileRK(q) does not update $x_k$ if a sampled row index was not acceptable.

Conditioned on choosing an acceptable row, we either pick an index from $I_1$ or from $I_2$, and the conditional probability $p_J$ to choose an index in $I_1$ satisfies $p_J \le 2\beta m /q m = 2 \beta/q$ (the upper bound refers to the case when $I_1 = J$).

Now, given $\mathcal{E}^{k+1}_{I_2}$, the iterate $\ve{x}_{k+1}$ is obtained by applying an iteration of the Standard RK method for the matrix $\ve{A}_{I_2}$. 
Note that $I_2$ \rev{has size at least $(q - 2\beta)m - 1$, since at least \plzcheck{$\lfloor qm \rfloor$} indices are acceptable, and at most $2\beta m$ of these are contained in $I_1.$} Next we apply Proposition \ref{sub_conditioning} with $\alpha = q - 2\beta - \frac{1}{m} > 0.$ \rev{As long as $\beta$ is at most a constant factor of $q$ (e.g., $\beta \leq q/4$), then we have $\alpha \geq cq.$ The proposition then gives  $\|\ve{A}_{I_2}^{-1}\|_2 \leq C_{\beta, \densconst}\sqrt{n/m}$ with probability $1 - 3\exp(-c_{q}m)$ provided that $$\frac{m}{n} \ge C_{q, \densconst} := C \frac{1}{\alpha} \log\parens{\frac{\densconst \subgaussconst}{\alpha}}.$$}  \rev{Since all the rows of $\ve{A}$ are normalized to have unit norm, we also know that $\norm{\ve{A}}{F} = \sqrt{m}$. Therefore, with high probability, we may bound the condition number of $\ve{A}_{I_2}$ as 
\begin{equation}\label{cond_number}
\kappa(\ve{A}_{I_2}) \leq \|\ve{A}\|_F \|\ve{A}_{I_2}^{-1}\|_2 \leq \sqrt{m} C_{q, \densconst} \sqrt{n/m} = C_{q, \densconst} \sqrt{n}.
\end{equation}}
Note that Proposition~\ref{sub_conditioning} gives a uniform \plzcheck{lower} bound for the condition number for all index subsets of size at least $\alpha m$. So in each iteration of the method, $\ve{A}_{I_2}$ will have a good condition number \plzcheck{upper} bounded by \eqref{cond_number} with probability at least \rev{$1 - 3\exp(-c_{q}m)$}. \rev{In particular, $\ve{A}_{I_2}$ has full rank since $\|{\ve{A}_{I_2}^{-1}}\|$ is finite.} Then, by the analysis of the Standard RK method \cite{strohmer2009randomized} given in \eqref{eq:RKconvrate}, we have 
$$\EE_k(\norm{\ve{e}_{k+1}}{}^2|\mathcal{E}^{k+1}_{I_2}) \leq \left(1- \frac {c_1}{n}\right)\norm{\ve{e}_k}{}^2.$$


Now, we consider two cases. In the no corruptions case when $\beta = 0$, we have that the set $I_1$ is empty and $p_J = 0$ by definition. So, 
\begin{equation}\label{beta_zero}
\EE_k(\norm{\ve{e}_{k+1}}{}^2) \le
q \left( 1 - \frac{c_1}{n} \right) \norm{\ve{e}_k}{}^2 + (1 - q) \norm{\ve{e}_{k}}{}^2 \le \left( 1 - \frac{qc_1}{n} \right) \norm{\ve{e}_k}{}^2.
\end{equation}

In the other case, when $\beta > 0$, we need to consider the second possibility, if the next index was coming from $I_1$. 
Conditioned on taking an acceptable row, we can choose $h_i$ with $\abs{h_i} \leq \corrQ{q}{\ve{x}_k}$, so that
\begin{align*}
\EE_k(\norm{\ve{e}_{k+1}}{}^2|\mathcal{E}^{k+1}_{I_1}) &= \EE_k(\norm{\ve{e}_k - h_i \ve{a}_i}{}^2|\mathcal{E}^{k+1}_{I_1})\\ 
&= \norm{\ve{e}_k}{}^2 + h_i^2 - 2\EE_k\left(h_i\inner{\ve{e}_k}{\ve{a}_i}|\mathcal{E}^{k+1}_{I_1}\right)\\
&\leq  \norm{\ve{e}_k}{}^2 + \corrQ{q}{\ve{x}_k}^2 + 2\corrQ{q}{\ve{x}_k} \EE_k(\abs{\inner{\ve{e}_k}{\ve{a}_i}}|i \sim \Unif(I_1)).
\end{align*}

 We would like to bound these last two terms.  By the Remark~\ref{corr_med_cor}, for $\alpha \le 1 - q - \beta$,

\rev{$$
 \Pr\left(\corrQ{q}{\ve{x}_k} \le \frac{C_{K} \norm{\ve{e}_k}{}}{\alpha\sqrt{n}}\right) \ge 1 - 2\exp(-m).
$$  As long as $q+\beta$ is bounded away from one, as is the case if the constants in the statement of the theorem are chosen appropriately small, this yields a bound of the form
$$
 \Pr\left(\corrQ{q}{\ve{x}_k} \le \frac{C_{K} \norm{\ve{e}_k}{}}{\sqrt{n}}\right) \ge 1 - 2\exp(-m).
$$}
Also, we apply Lemma \ref{avg_bound} to the set $I_1$ \rev{(recall that $|{I_1}| \ge \beta m$)} to get
that with probability $1-2\exp(-cm)$, 
\[\EE_k\parens{\abs{\inner{\ve{e}_k}{\ve{a}_i}}\,|i \sim \Unif(J)} = \frac{1}{|{I_1}|}\sum_{i\in {I_1}}\abs{\inner{\ve{e}_k}{\ve{a}_i}} \le C \norm{\ve{e}_k}{} \sqrt{\frac{m}{n|{I_1}|}} \le \frac{C \norm{\ve{e}_k}{}}{ \sqrt{\beta n}}.\] 
Thus, \begin{equation}\EE_k(\norm{\ve{e}_{k+1}}{}^2|\mathcal{E}^{k+1}_{I_1}) \le  \parens{1+\frac{\sqrt{\beta}c_2 +  c_3}{\sqrt{\beta}n}}\norm{\ve{e}_k}{}^2.\end{equation}
So, in this case the norm of the error could  increase, but not too much (as we will see below).

So, by the total expectation theorem, we have
\begin{align*}
\EE_k(\norm{\ve{e}_{k+1}}{}^2|\mathcal{E}_{\Accept}(k+1)) &= p_J \EE_k(\norm{\ve{e}_{k+1}}{}^2|\mathcal{E}^{k+1}_{I_1})  + (1-p_J)\EE_k(\norm{\ve{e}_{k+1}}{}^2|\mathcal{E}^{k+1}_{I_2}) \\
&\le \left[p_J \parens{1+\frac{\sqrt{\beta}c_2 +  c_3}{\sqrt{\beta}n}} + (1-p_J)(1- \frac{c_1}{n}) \right] \norm{\ve{e}_k}{}^2 \\
&= \left[ 1 - \frac{c_1}{n} + p_J\left(\frac{(c_1 + c_2) \sqrt{\beta} + c_3}{\sqrt{\beta} n}\right) \right] \norm{\ve{e}_k}{}^2 \\
&\le \left[ 1 - \frac{c_1}{n} + \frac{2\beta}{q}\cdot\frac{c_1 + c_2+ c_3}{\sqrt{\beta} n} \right] \norm{\ve{e}_k}{}^2\\ 
& \le \left[ 1 - \frac{0.5c_1}{n} \right] \norm{\ve{e}_k}{}^2,
\end{align*}
where the last step holds if $\beta$ a sufficiently small constant (we need $\sqrt{\beta} \le c q =: C_q$). Finally, from \eqref{cond_to_accept} we obtain the per-iteration guarantee
\rev{\begin{equation}\label{beta_nonzero}
    \EE_k(\norm{\ve{e}_{k+1}}{}^2) \le
(\lfloor qm) \rfloor/m \left( 1 - \frac{0.5c_1}{n} \right) \norm{\ve{e}_k}{}^2 + (1 - \lfloor qm \rfloor/m) \norm{\ve{e}_{k}}{}^2 \le \left( 1 - \frac{cq}{n} \right) \norm{\ve{e}_k}{}^2.
\end{equation}}
Theorem~\ref{median_convergence} now follows from \eqref{beta_zero} or \eqref{beta_nonzero} by induction.
\end{proof}

\begin{remark}[Condition on $\beta$.] We need the fraction of corruptions $\beta$ to be sufficiently small. Specificlly, our proof of Theorem~\ref{median_convergence} requires $\sqrt{\beta} < c q$, where $c$ is some small positive constant.   Intuitively, this is required since the quantile bound (admissibility) is the only way to bound potential loss if the step is made using one of the corrupted equations (as we do not impose any restrictions on the size of corruptions). Moreover, the expected loss of progress, given the projection on the admissible corrupted equation, must be so small that it is compensated by the expected exponential convergence rate, given that one of the equations from the uncorrupted part was selected. 
\end{remark}

\begin{remark}
Although the bounded density assumption is crucial for Proposition~\ref{sub_conditioning} to hold (see Remark~\ref{bdd_density_assumption}), one should not expect the failure of Proposition \ref{sub_conditioning} to result in the QuantileRK method not converging. In the Bernoulli case, the per-iteration guarantee given likely no longer holds, however one expects it to fail for only a very small set of vectors $\ve{x}_k$. Provided that the QuantileRK method does not attract iterates to this set of bad vectors, one should still expect convergence from a randomly chosen $\ve{x}_0$. We leave such an analysis to future work. (However we empirically demonstrate convergence in Figure~\ref{fig:bernoulli_adversarial} (a).)

\end{remark}



\subsection{Analysis of QuantileSGD method}\label{subsec:QuantileSGD}
In this section, we provide a proof that the QuantileSGD method converges.  To do so, we first introduce an optimal SGD method in Section \ref{subsubsec:OPTSGD} and then prove that QuantileSGD approximates this optimal method in Section \ref{subsubsec:quantileSGD}.  We then give an improved analysis in the streaming setting in Section \ref{sec:streaming}. 
\subsubsection{OptSGD} \label{subsubsec:OPTSGD}
As a first step towards the analysis of the quantile-based SGD method, we introduce the OptSGD method taking the steps of the optimal size towards the solution.
Note that SGD iterates can be written in the form 
\begin{equation}\label{l1sgd}
\ve{x}_{k+1} = \ve{x}_k - \eta_k s_i(\ve{x}_k) \ve{a}_i, \quad\quad \text{
where }\quad  s_i(\ve{x}_k) := \sgn(\inner{\ve{a}_i}{\ve{x}_k}-b_i);
\end{equation}
that is, the vector $s_i(\ve{x}_k)\ve{a}_i$ is directed from the hyperplane defined by the $i^{\text{th}}$ equation towards the half space that $\ve{x}_k$ lies on. We assume that SGD samples rows uniformly, so $i \sim \Unif([m]).$ The constant $\eta_k > 0$ defines the length of the step (recall that $\|\ve{a}_i\|_2 = 1)$). OptSGD chooses the step size $\eta_k^*$ so that the expected distance to the solution $\EE \norm{\ve{e}_{k+1}}{2}^2 = \EE \norm{\ve{x}_{k+1} - \ve{x}^*}{2}^2$ is minimized. 

Namely, we have
\begin{align}\label{expectation_via_optsize}
\EE\left(\norm{\ve{e}_{k+1}}{2}^2\right) &= \EE\left(\norm{\ve{e}_k - s_i(\ve{x}_k)\eta_k\ve{a}_i}{2}^2\right)\nonumber\\
&= \EE\left(\|\ve{e}_k\|^2_2 - 2 s_i(\ve{x}_k) \inner{\ve{e}_k}{\ve{a}_i}\eta_k + s_i(\ve{x}_k)^2\norm{\ve{a}_i}{2}^2\eta_k^2\right)\nonumber\\
&= \norm{\ve{e}_k}{2}^2 - 2 \EE\left(s_i(\ve{x}_k)\inner{\ve{e}_k}{\ve{a}_i}\right)\eta_k + \eta_k^2\nonumber\\ 
&= \left(\eta_k - \EE(s_i(\ve{x}_k)\inner{\ve{e}_k}{\ve{a}_i})\right)^2 - \left(\EE(s_i(\ve{x}_k)\inner{\ve{e}_k}{\ve{a}_i})\right)^2 + \norm{\ve{e}_k}{2}^2,
\end{align} 
which is minimized by setting 
\begin{equation}\label{optSize}
\eta^*(\ve{x}_k) = \EE\left(s_i(\ve{x}_k)\inner{\ve{e}_k}{\ve{a}_i}\right) = \frac1m \sum_{i=1}^m s_i(\ve{x}_k)\inner{\ve{e}_k}{\ve{a}_i}.
\end{equation} 


\subsubsection{Quantile SGD} \label{subsubsec:quantileSGD}
In the previous section, we derived a theoretically optimal step size for $l_1$ stochastic gradient descent.  The formula for the step size \eqref{optSize} relied on $\ve{e}_k$ which is unknown during runtime. Actually, since $\inner{\ve{e}_k}{\ve{a}_i} = \inner{\ve{x}_k}{\ve{a}_i} - \inner{\ve{x}^*}{\ve{a}_i}=\inner{\ve{x}_k}{\ve{a}_i} - b_i$ for any uncorrupted equation, it is the presence of corruptions that makes $\eta^*(\ve{x}_k)$ unavailable at runtime.  Here we show that order statistics can be applied to give an approximation to the optimal step size.


First, let us show that $\eta_k^*(\ve{x}_k)$ is well-approximated by 
$M(\ve{x}_k - \ve{x}^*)$. We notice that the sums defining $\eta_k^*(\ve{x}_k)$ and $M(\ve{x}_k - \ve{x}^*)$ respectively differ only in the terms corresponding to the indices of the corrupted equations. So, given that the fraction of corruptions is small enough, we can efficiently bound this difference.

\begin{proposition}\label{eta_opt_m} Fix any $\delta \in (0,1)$. 
Let the system be defined by random matrix $\ve{A} \in \mathbb{R}^{m \times n}$ 
satisfying Assumptions \ref{incoherent} and \ref{bounded_density} with $m\geq C_{\subgaussconst} n$, and 
$\beta = |\textup{supp}(\ve{b}_C)|/m$ a small enough positive constant.  Let $\eta^*(\ve{x})$ be optimal step size for SGD method defined as in \eqref{optSize}. Then, with probability at least $1 - c\exp(-c_{\subgaussconst} m)$ we have for any $\ve{x} \in \R^n$ that
\begin{equation}\label{M_est}(1 - \delta) \eta^*(\ve{x}) \leq M(\ve{x} - \ve{x}^*) \leq (1 + \delta) \eta^*(\ve{x}).\end{equation}
\end{proposition}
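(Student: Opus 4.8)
The plan is to exploit the fact that $M(\ve{x}-\ve{x}^*)$ and $\eta^*(\ve{x})$ are both averages over all $m$ equations whose summands \emph{coincide on every uncorrupted index}, so their difference is supported entirely on the $\beta m$ corrupted indices. First I would set $\ve{e} := \ve{x}-\ve{x}^*$ and observe that for an uncorrupted index $i$ one has $\inner{\ve{a}_i}{\ve{x}}-b_i = \inner{\ve{a}_i}{\ve{e}}$, hence $s_i(\ve{x})\inner{\ve{e}}{\ve{a}_i} = \abs{\inner{\ve{e}}{\ve{a}_i}}$, which is exactly the $i$-th summand of $m\cdot M(\ve{e})$. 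Letting $C = \mathrm{supp}(\ve{b}_C)$ with $\abs{C} = \beta m$, this gives
$$M(\ve{e}) - \eta^*(\ve{x}) = \frac1m\sum_{i\in C}\bigl(\abs{\inner{\ve{e}}{\ve{a}_i}} - s_i(\ve{x})\inner{\ve{e}}{\ve{a}_i}\bigr),$$
and since $s_i(\ve{x})\in\{-1,1\}$, each summand lies in $[0,\,2\abs{\inner{\ve{e}}{\ve{a}_i}}]$. In particular $0 \le M(\ve{e}) - \eta^*(\ve{x}) \le \frac2m\sum_{i\in C}\abs{\inner{\ve{e}}{\ve{a}_i}}$.

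Next I would control the right-hand side uniformly in $\ve{x}$. Assuming $\ve{e}\neq\ve{0}$ (the case $\ve{x}=\ve{x}^*$ being trivial, as both sides vanish), apply Lemma~\ref{avg_bound} with $T = C$ to the unit vector $\ve{e}/\norm{\ve{e}}{}$: on an event of probability $1-2\exp(-cm)$ one gets $\sum_{i\in C}\abs{\inner{\ve{e}}{\ve{a}_i}} \le C_{\subgaussconst}\sqrt{\beta}\,\tfrac{m}{\sqrt n}\norm{\ve{e}}{}$ simultaneously over all $\ve{x}$ and, crucially, over the adversarially chosen set $C$. Hence $M(\ve{e}) - \eta^*(\ve{x}) \le \tfrac{2C_{\subgaussconst}\sqrt\beta}{\sqrt n}\norm{\ve{e}}{}$. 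To convert this into a relative bound I would invoke Proposition~\ref{M_lower_bound} (rescaled by $\norm{\ve{e}}{}$), which under $m\ge C_{\subgaussconst}n$ yields $M(\ve{e}) \ge \tfrac{c}{\sqrt n}\norm{\ve{e}}{}$ uniformly in $\ve{x}$ with probability $1-2\exp(-c_{\subgaussconst}m)$. On the intersection of these two events,
$$0 \le M(\ve{e}) - \eta^*(\ve{x}) \le \frac{2C_{\subgaussconst}\sqrt\beta}{c}\,M(\ve{e}) \qquad\text{for every } \ve{x}\in\R^n.$$

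Finally I would fix the threshold for $\beta$: if $\beta$ is small enough that $\tfrac{2C_{\subgaussconst}\sqrt\beta}{c} \le \tfrac{\delta}{1+\delta}$ — a constraint depending only on $\delta$ and $\subgaussconst$ — then the last display gives $\eta^*(\ve{x}) \ge \tfrac1{1+\delta}M(\ve{e})$, i.e.\ $M(\ve{e}) \le (1+\delta)\eta^*(\ve{x})$; and since $\eta^*(\ve{x}) \le M(\ve{e})$ we trivially also have $(1-\delta)\eta^*(\ve{x}) \le M(\ve{e})$, which is \eqref{M_est}. A union bound over the two failure events gives total probability $1-c\exp(-c_{\subgaussconst}m)$. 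As for difficulty: there is no serious obstacle. The only idea is the elementary observation that $M$ and $\eta^*$ agree away from the corruptions; after that, everything reduces to the previously established \emph{uniform} bounds — Lemma~\ref{avg_bound} supplies uniformity over both $\ve{x}$ and the corruption support, and Proposition~\ref{M_lower_bound} supplies the matching lower bound on $M$. The minor points needing care are treating $\ve{x}=\ve{x}^*$ separately and checking that the constants combine so that the $\beta$-threshold depends only on $\delta$ and $\subgaussconst$, consistent with the ``sufficiently small constant'' hypothesis.
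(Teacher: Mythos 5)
Your proposal is correct and follows essentially the same route as the paper: both reduce the difference $M(\ve{x}-\ve{x}^*)-\eta^*(\ve{x})$ to a sum over (at most) the $\beta m$ corrupted indices, control that sum uniformly via Lemma~\ref{avg_bound}, and compare against the uniform lower bound on $M$ from Proposition~\ref{M_lower_bound} before choosing $\beta$ small. The only cosmetic differences are that you sum over the full corruption support while the paper sums over the subset of indices with negative summands, and that you explicitly record the one-sided inequality $\eta^*(\ve{x})\le M(\ve{x}-\ve{x}^*)$, which makes the lower half of \eqref{M_est} immediate.
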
 
\begin{proof}
Let $S$ denote the set of indices corresponding to negative terms in the sum \eqref{optSize}. Note that for all uncorrupted equations $i$, we have $s_i(\ve{x}_k) = \sgn(\inner{\ve{e}_k}{\ve{a}_i})$, so the $i$-th term in $\eta^*(\ve{x}_k)$ is non-negative, and $|S| \leq \beta m.$ We then have $$\abs{\eta^*(\ve{x}) - M(\ve{x} - \ve{x}^*)} \leq \frac{2}{m}\sum_{i\in S} \abs{\inner{\ve{x}-\ve{x}^*}{\ve{a}_i}}.$$

Rescaling to normalize $\ve{x}-\ve{x}^*$ and applying Lemma \ref{avg_bound} allows us to further bound 
$$\abs{\eta^*(\ve{x}) - M(\ve{x} - \ve{x}^*)} \leq\frac{2|S|}{m} C_{\subgaussconst} \frac{1}{\sqrt{|S|/m} \sqrt{n}} \norm{\ve{x}-\ve{x}^*}{} \leq 2C_{\subgaussconst}\frac{\sqrt{\beta}}{\sqrt{n}} \norm{\ve{x}-\ve{x}^*}{}$$ uniformly for all $\ve{x}$ with probability at least $1 - 2\exp(-cm)$.

Moreover, by Propostion \ref{M_lower_bound},  \[M(\ve{x} - \ve{x}^*)\ge \frac{c}{\sqrt{n}} \norm{\ve{x}-\ve{x}^*}{}\] for all $\ve{x}$ with probability at least $1-2\exp(-c_{\subgaussconst} m)$. Thus by taking $\beta$ to be a sufficiently small constant (so that the difference between $\eta^*(\ve{x}) $ and $M(\ve{x} - \ve{x}^*)$ is negligible compared to the size of $M(\ve{x} - \ve{x}^*)$), we conclude the proof of Proposition~\ref{eta_opt_m}.
\end{proof}

Although the empirical mean $M(\ve{x} - \ve{x}^*)$, as well as $\eta_k^*$ is not available at runtime, the above proposition allows us to show that in order to obtain a near optimal convergence guarantee, it suffices to approximate $\eta_k^*$ to within a constant factor.

\begin{proposition}\label{almostOptrate} 
Let the system be defined by random matrix $\ve{A} \in \mathbb{R}^{m\times n}$ satisfying Assumptions \ref{incoherent} and \ref{bounded_density} with $m\geq C_k n$.
Suppose we run an SGD method \eqref{l1sgd} with the stepsize $\eta_k$, satisfying $0 < c_1 \leq \eta_k/\eta^*(\ve{x}_k) \leq c_2 < 2$ at each iteration $k = 1, 2, 3, \ldots$, where $\eta^*(\ve{x}_k)$ is an optimal step size given by ~\eqref{optSize}. Then, for any $\beta = |\textup{supp}(\ve{b}_C)|/m \in (0,1)$, there exists a constant $c = c(c_1, c_2) > 0$ such that 
\begin{equation}\label{almost_rate}
\EE(\norm{\ve{e}_{k+1}}{2}^2) \leq \left(1 - c\left(\frac{\eta^*(\ve{x}_k)}{\norm{\ve{e}_k}{2}}\right)^2 \right)\norm{\ve{e}_k}{2}^2.
\end{equation}

Moreover, if the fraction of corrupted equations $\beta$ is small enough, then with probability at least $1-c\exp(-c_{\subgaussconst} m)$, $\ve{A}$ is sampled such that the rate of convergence is linear, namely, there exists a constant $C = C(c_1,c_2) > 0$ such that 
\begin{equation}\label{sqd_almost_optimal_rate}
\EE(\norm{\ve{e}_{k+1}}{2}^2) \leq \left(1 - \frac{C}{n} \right)\norm{\ve{e}_k}{}^2.
\end{equation}
\end{proposition}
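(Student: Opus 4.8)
The first bound \eqref{almost_rate} should come out purely algebraically from the exact per-step identity \eqref{expectation_via_optsize}, and the linear rate \eqref{sqd_almost_optimal_rate} should then follow by plugging in the uniform lower bound on $\eta^*$ supplied by Propositions \ref{eta_opt_m} and \ref{M_lower_bound}. So the plan has two stages: an elementary optimization-of-a-quadratic argument, and then an invocation of the two preceding propositions.

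First I would condition on the history of the first $k$ iterations, so that $\ve{x}_k$, $\ve{e}_k$, and $\eta_k$ are fixed and $\eta^*(\ve{x}_k) = \EE_k(s_i(\ve{x}_k)\inner{\ve{e}_k}{\ve{a}_i})$ as in \eqref{optSize}. Then \eqref{expectation_via_optsize} reads $\EE_k(\norm{\ve{e}_{k+1}}{2}^2) = \norm{\ve{e}_k}{2}^2 - \eta^*(\ve{x}_k)^2 + (\eta_k-\eta^*(\ve{x}_k))^2$. Writing $\eta_k = \rho_k\,\eta^*(\ve{x}_k)$ with $\rho_k\in[c_1,c_2]$ (legitimate once we know $\eta^*(\ve{x}_k)>0$ on the good event below; the statement is trivial when $\ve{e}_k = \ve{0}$), this collapses to $\EE_k(\norm{\ve{e}_{k+1}}{2}^2) = \norm{\ve{e}_k}{2}^2 - \rho_k(2-\rho_k)\,\eta^*(\ve{x}_k)^2$. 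Since $t\mapsto t(2-t)$ is concave and strictly positive on $(0,2)$, on $[c_1,c_2]\subset(0,2)$ it is bounded below by $\min\{c_1(2-c_1),\,c_2(2-c_2)\} =: c(c_1,c_2) > 0$, which gives \eqref{almost_rate}.

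Next I would bound $\eta^*(\ve{x}_k)^2/\norm{\ve{e}_k}{2}^2$ below by a constant times $1/n$, uniformly in $\ve{x}_k$. Choosing $\beta$ small enough to apply Proposition \ref{eta_opt_m} with, say, $\delta = 1/2$, on an event of probability at least $1-c\exp(-c_{\subgaussconst}m)$ we get $\eta^*(\ve{x})\ge \tfrac{2}{3} M(\ve{x}-\ve{x}^*)$ for every $\ve{x}$; combining this with the uniform lower bound $M(\ve{x}-\ve{x}^*)\ge \tfrac{c}{\sqrt n}\norm{\ve{x}-\ve{x}^*}{}$ from Proposition \ref{M_lower_bound} (valid since $m\ge C_{\subgaussconst}n$, and in fact already established inside the proof of Proposition \ref{eta_opt_m}) yields $\eta^*(\ve{x})\ge \tfrac{c}{\sqrt n}\norm{\ve{x}-\ve{x}^*}{}$ for all $\ve{x}$ on that event. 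In particular $\eta^*(\ve{x}_k)\ge \tfrac{c}{\sqrt n}\norm{\ve{e}_k}{}$ for the random iterate $\ve{x}_k$, and substituting into \eqref{almost_rate} gives $\EE_k(\norm{\ve{e}_{k+1}}{2}^2)\le (1 - C/n)\norm{\ve{e}_k}{}^2$ with $C = C(c_1,c_2)$, which is \eqref{sqd_almost_optimal_rate}.

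I do not expect any step to be genuinely hard: the per-iteration progress is \emph{exactly} quadratic in the step size with vertex at $\eta^*$, so any step within a constant factor of $\eta^*$ and bounded away from $2\eta^*$ retains a constant fraction of the optimal decrease. The one point that needs care — and the reason the two preceding propositions were stated uniformly in $\ve{x}$ — is that $\ve{x}_k$ is not fixed but generated by the algorithm's own randomness, so the inequality $\eta^*(\ve{x}_k)\gtrsim\norm{\ve{e}_k}{}/\sqrt n$ must hold simultaneously for all $\ve{x}$; this uniform control (and the attendant hypotheses "$\beta$ small'' and $m\gtrsim n$) is the only genuinely probabilistic ingredient. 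I would also record the trivial bookkeeping that $\eta^*(\ve{x}_k)>0$ on the good event so $\rho_k = \eta_k/\eta^*(\ve{x}_k)$ is well defined and positive, and handle $\ve{e}_k=\ve{0}$ separately.
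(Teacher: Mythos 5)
Your proposal is correct and follows essentially the same route as the paper: the exact quadratic identity \eqref{expectation_via_optsize} gives the per-step decrease $\rho_k(2-\rho_k)(\eta^*)^2$ (the paper writes the equivalent $1-(\rho_k-1)^2$ via $(\eta_k-\eta_k^*)^2-(\eta_k^*)^2$), and the linear rate then follows by combining Proposition \ref{eta_opt_m} (the paper takes $\delta=1/3$ rather than $1/2$, which is immaterial) with Proposition \ref{M_lower_bound} to get $\eta^*(\ve{x}_k)\gtrsim \norm{\ve{e}_k}{}/\sqrt{n}$ uniformly. Your explicit handling of the edge cases $\ve{e}_k=\ve{0}$ and $\eta^*>0$ is a minor tidiness improvement over the paper's write-up but not a different argument.
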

\begin{proof}
Throughout the proof, we adopt a shortening notation $\eta^*_k = \eta^*(\ve{x}_k)$. 

Indeed, by the condition on $\eta_k$ we have that 
$$
|\eta_k - \eta_k^*| \le \eta_k^*\max\{c_2 - 1, c_1 - 1\}
$$
and $c = 1 - (\max\{c_2 - 1, c_1 - 1\})^2 > 0$. So, by equation~\eqref{expectation_via_optsize} and the definition of $\eta^*_k$ (in \eqref{optSize}), we have that
$$
\EE\left(\norm{\ve{e}_{k+1}}{2}^2\right) = (\eta_k - \eta_k^*)^2 - (\eta_k^*)^2 + \|\ve{e}_k\|_2^2  \le \|\ve{e}_k\|_2^2 - c(\eta_k^*)^2,
$$
and so
$$
\EE(\norm{\ve{e}_{k+1}}{2}^2) \leq \left(1 - c\left(\frac{\eta_k^*}{\norm{\ve{e}_k}{2}}\right)^2 \right)\norm{\ve{e}_k}{2}^2.
$$

To show that the convergence rate is linear, note that by applying Proposition~\ref{eta_opt_m} with $\delta=1/3$, and Proposition~\ref{M_lower_bound}, we have the bound \[\eta^*(\ve{x}_k) \geq \frac{3}{4}M(\ve{x}_k - \ve{x}^*) \gtrsim \frac{1}{\sqrt{n}}\norm{\ve{x}_k-\ve{x}^*}{}.\] This concludes the proof of Proposition~\ref{almostOptrate}.
\end{proof}



{\bfseries Roadmap.} We are now set to give a proof of Theorem~\ref{SGD_convergence}. The general plan is as follows: we know that quantiles of the residual $\corrQ{q}{\ve{x}_k}$ are well-approximated by the empirical uncorrupted quantiles $\uncorrQ{q}{\ve{x}_k}$ (Lemma~\ref{residual_vs_emp_quantiles}), then we show that empirical uncorrupted quantiles concentrate near the empirical mean $M(\ve{x} - \ve{x}^*)$, which is in turn close enough to the optimal step size $\eta^*(\ve{x})$ (Proposition~\ref{eta_opt_m}). Finally, we invoke Proposition~\ref{almostOptrate} to conclude the linear convergence rate of the QuantileSGD$(q)$ method.

\begin{proof}[Proof of Theorem~\ref{SGD_convergence}]
We upper bound the $q$-quantile of the corrupted residual, 
\begin{equation}\label{byMarkov}
\corrQ{q-\beta}{\ve{x}_k} \le \uncorrQ{q}{\ve{x}_k} \leq \frac{1}{1 - q}M(\ve{x}_k - \ve{x}^*) \leq \frac{1 + \delta}{1 -q}\eta^*(\ve{x}_k) < 2 \eta^*(\ve{x}_k),
\end{equation}
\rev{where the first inequality follows from Lemma~\ref{residual_vs_emp_quantiles}, the second from Markov's inequality, the third from Proposition~\ref{eta_opt_m} with probability at least $1 - 2\exp(-cm)$, and the fourth by choosing $ \delta \in (0, 1 -  2q)$.}

For the lower bound, we have that $\corrQ{q-\beta}{\ve{x}_k} \ge \uncorrQ{q - 2\beta}{\ve{x}_k}$ by Lemma~\ref{residual_vs_emp_quantiles}. Then, $$\uncorrQ{q-2\beta}{\ve{x}} \ge \frac{c_1}{\sqrt{n}}\norm{\ve{x}-\ve{x}^*}{}$$ for all $\ve{x}$ with probability at least  $1-\exp(-cm)$ by Lemma \ref{quantile_lower}. 
Now, $$M(\ve{x}) \le \frac{c_2}{\sqrt{n}}\norm{\ve{x}-\ve{x}^*}{},$$ so we may upper bound $\eta^*(\ve{x})$,
$$\eta^*(\ve{x}) \leq \frac{M(\ve{x})}{1 - \delta} \le \frac{c_2}{(1 - \delta)\sqrt{n}}\norm{\ve{x}-\ve{x}^*}{} \le\frac{c_2}{(1 - \delta)c_1}\uncorrQ{q-2\beta}{\ve{x}_k} \le \frac{1}{c}\corrQ{q-\beta}{\ve{x}_k}$$
for some positive constant $c$. 

Combining these upper and lower bounds on $\corrQ{q-\beta}{\ve{x}}$ we find that there exists $c > 0$ so that for all $\ve{x}\in\R^n,$ \rev{$$0<c<\frac{\corrQ{q-\beta}{\ve{x}}}{\eta^*(\ve{x})}< \frac{1+\delta}{(1-q)\eta^*(\ve{x})} < 2.$$}  We have shown that the hypothesis of Proposition \ref{almostOptrate} holds. Theorem~\ref{SGD_convergence} follows by induction.
\end{proof} 


\begin{remark}\label{gen_quant_for_uniform}
In some cases, for example, when $\ve{a}_i$ are independent vectors sampled uniformly from $S^{n-1}$ we can show that a bigger range of quantiles for an SGD step guarantees exponential convergence of the QuantileSGD method. In particular, using Gaussian concentration instead of Markov's inequality in \eqref{byMarkov},
the statement of Theorem~\ref{SGD_convergence} holds for \emph{QuantileSGD($q - \beta$)} for all $q \in (0, 0.75)$. Note that this justifies the optimal values for the quantile $q$ obtained experimentally (see Figure~\ref{fig:various_quantiles_rk_sgd} b)
\end{remark}

\subsubsection{Streaming Setting} \label{sec:streaming}

In the matrix setting we only prove convergence for a sufficiently small fraction of corruptions $\beta.$  While one could in principle unwind the constants from the random matrix theorems that we have applied, it would be unlikely to result in new insights.  Instead we note that the key complication in the matrix setting was handling ``asymmetries" in the matrix $\ve{A}$.  While the rows were sampled over $S^{n-1}$ in a close-to-uniform way, there was no guarantee that the rows of $\ve{A}$ (representing only a sample from this distribution) were uniformly spread over the sphere. 

Here we present a more optimized analysis in the streaming setting, which may be viewed as a model for extremely tall matrices where each row is likely to be sampled only once in the course of the method. In particular, it allows us to justify the QuantileSGD method when up to a $0.35$ fraction of all equations are corrupted (note that in both Theorem~\ref{median_convergence} and Theorem~\ref{SGD_convergence} we formally asked for the fraction of corruptions $\beta$ to be ``small enough").

Instead of a matrix let us consider some distribution $\mathcal D$ over $\R^n$ and $\beta \in [0,1]$.  On each of many iterations, we receive a pair $(\ve{a}_k, b_k)$ (in a non-streaming setting this pair was a row of the matrix and a corresponding entry of the vector $\ve{b}$ respectively). The vector $\ve{a_k}$ is always sampled from $\mathcal{D}.$  With probability $1-\beta,$ $b$ was selected so that $b = \inner{\ve{a}}{\ve{x}^*},$ and with probability $\beta,$ $b$ was chosen arbitrarily, and possibly adversarially.  Our goal is to  approximate $\ve{x}^*$.

For simplicity, we allow ourselves an arbitrary number of samples to estimate the quantiles of the residual $\corrQ{q}{\ve{x}_k}$, where $\ve{a_k}$ from Definition~\eqref{corrqx} are random gaussian vectors and respective $b_i$ are given by the samples. 



\begin{theorem}
\label{streaming_sgd_convergence}
In the streaming setting with adversarial corruptions and Gaussian samples (namely $\ve{a}_k)$ are standard $n$-variate Gaussian random vectors), QuantileSGD(q) converges to $\ve{x}^*$ with $\beta=0.35$ as long as the quantile $q$ is chosen sufficiently small. 
\end{theorem}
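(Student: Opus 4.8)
The streaming setting removes the two most technical ingredients of the matrix analysis: since we may draw arbitrarily many samples, the empirical quantile $\corrQ{q}{\ve{x}_k}$ equals the population $q$-quantile of the corrupted residual, so no uniform-over-the-sphere argument is needed; and since the rows are exactly Gaussian we may condition on $\ve{x}_k$ and use rotational invariance to reduce to a one-dimensional computation. Concretely I would translate so that $\ve{x}^*=\ve{0}$, condition on $\ve{x}_k$, set $r:=\norm{\ve{e}_k}{}$, and (after normalizing the sampled rows, as the method requires) use that $\inner{\ve{a}}{\ve{e}_k}$ equals $r$ times a coordinate of a uniformly random unit vector, which in the Gaussian/large-$n$ regime is essentially $r\,n^{-1/2}g$ with $g\sim N(0,1)$ (cf.\ the Projective CLT already invoked for Proposition~\ref{M_lower_bound}). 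Then $\uncorrQ{p}{\ve{x}_k}$ is $r\,n^{-1/2}$ times the $p$-quantile of a half-normal, and $\EE\abs{\inner{\ve{a}}{\ve{e}_k}} = r\sqrt{2/(\pi n)}$, both explicit.

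\textbf{Per-step drop.} I would reuse the OptSGD identity \eqref{expectation_via_optsize}: with step size $\gamma_k=\corrQ{q}{\ve{x}_k}$ and $\mu_k:=\EE_k\big(s_i(\ve{x}_k)\inner{\ve{e}_k}{\ve{a}_i}\big)$,
\[
\EE_k\big(\norm{\ve{e}_{k+1}}{}^2\big)=\norm{\ve{e}_k}{}^2-2\gamma_k\mu_k+\gamma_k^2 ,
\]
so the iteration contracts, $\EE_k(\norm{\ve{e}_{k+1}}{}^2)\le(1-\rho_k)\norm{\ve{e}_k}{}^2$ with $\rho_k=\gamma_k(2\mu_k-\gamma_k)/\norm{\ve{e}_k}{}^2$, precisely when $0<\gamma_k<2\mu_k$; and the rate is the desired $1-c/n$ once we also know that $\gamma_k$ and $2\mu_k-\gamma_k$ are each of order $r\,n^{-1/2}$. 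Thus everything reduces to a lower bound on $\mu_k$ and a two-sided bound on $\gamma_k$, uniform over the adversary's choices.

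\textbf{The two bounds.} For $\mu_k$: split the expectation into the clean mass $1-\beta$, which contributes exactly $(1-\beta)\EE\abs{\inner{\ve{a}}{\ve{e}_k}}$, and the corrupted mass $\beta$, whose contribution is at worst $-\beta\EE\abs{\inner{\ve{a}}{\ve{e}_k}}$; so at a minimum $\mu_k\ge(1-2\beta)\,r\sqrt{2/(\pi n)}$, and one sharpens the corrupted term using the structure in the next paragraph. For $\gamma_k$: Lemma~\ref{residual_vs_emp_quantiles} gives $\uncorrQ{q-\beta}{\ve{x}_k}\le\corrQ{q}{\ve{x}_k}\le\uncorrQ{q+\beta}{\ve{x}_k}$, and each side is $r\,n^{-1/2}$ times an explicit half-normal quantile. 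The upper bound controls overshoot; the lower bound --- together with the observation that a corrupted equation can only pull the quantile down by taking a small residual value, which forces its right-hand side close to $\inner{\ve{a}_i}{\ve{x}_k}$ and therefore prevents it from reversing the sign of the step --- keeps $\gamma_k$ of order $r\,n^{-1/2}$.

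\textbf{Combining, and the main obstacle.} Substituting the two bounds, the contraction condition $0<\gamma_k<2\mu_k$ with a uniform gap becomes a concrete numerical inequality between a half-normal quantile (of level roughly $q$ or $q+\beta$) and $(1-2\beta)$ times the half-normal mean $\sqrt{2/\pi}$; for $q$ taken sufficiently small this inequality holds up to $\beta$ slightly above $0.35$, which is the threshold in the statement. The main obstacle is exactly the estimate for $\mu_k$ against an adversary that sees the measurements: one has to rule out that the adversary simultaneously (i) chooses the corrupted right-hand sides so that the steps $s_i(\ve{x}_k)\ve{a}_i$ point strongly away from $\ve{x}^*$, and (ii) keeps those residuals small enough not to inflate $\corrQ{q}{\ve{x}_k}$. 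These goals are in tension: a corruption that reverses the step direction must place $b_i$ far from $\inner{\ve{a}_i}{\ve{x}_k}$, so its residual is large, sits above the $q$-quantile, and only raises $\gamma_k$ --- and only as far as $\uncorrQ{q+\beta}{\ve{x}_k}$ --- whereas a corruption whose residual is small is too close to the true equation to reverse the step. Gaussianity converts this tension into exact constants, and $\beta=0.35$ is the break-even point. With the per-iteration contraction in hand, the theorem follows by the same induction as Theorems~\ref{median_convergence} and \ref{SGD_convergence}.
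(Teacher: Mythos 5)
Your proposal follows essentially the same route as the paper's proof: reduce to one dimension via rotational invariance of the Gaussian samples, invoke the OptSGD identity \eqref{expectation_via_optsize} so that contraction is equivalent to $0<\gamma_k<2\eta^*(\ve{x}_k)$, lower-bound $\eta^*$ by $(1-2\beta)\sqrt{2/\pi}\,\norm{\ve{e}_k}{}$ via the clean/corrupted split, upper-bound $\gamma_k$ by the $(q+\beta)$-quantile of the half-normal via Lemma~\ref{residual_vs_emp_quantiles}, and check the resulting numerical inequality at $\beta=0.35$. The only differences are cosmetic (you normalize the rows, introducing $n^{-1/2}$ factors the paper absorbs by scale invariance) and some extra heuristic discussion of the adversary's trade-off that the paper's crude bounds render unnecessary.
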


\begin{remark}
The model of the left hand side of the system is different from our earlier convention, in particular, $\ve{a}_k$'s do not have exactly unit norm.  However this distinction is unimportant since (i) our methods are invariant under rescaling the $\ve{a}_k$s and (ii) the one-dimensional projections of the uniform distribution over $\sqrt{n}S^{n-1}$ converge in distribution to a Gaussian as $n\rightarrow\infty$.
\end{remark}


\begin{proof}

Recall that for QuantileSGD, we chose our step size $\eta_k = \corrQ{q}{\ve{x}_k}.$ In the streaming setting with Gaussian samples, the value of $\corrQ{q}{\ve{x}}$ only depends on $\norm{\ve{x}-\ve{x}^*}{}.$ This follows directly from the definition of $\tilde Q_q$ along with rotation-invariance of Gaussian vectors.  Furthermore, $\corrQone{q}$ respects dilations about $\ve{x}^*$ in the sense that \[\corrQ{q}{\ve{x}^* + \lambda (\ve{x}-\ve{x}^*)} = \lambda\corrQ{q}{\ve{x}}\] for $\lambda\in \R.$  Again this is a simple check from the definition of $\tilde Q.$  The same properties hold for the optimal SGD step size $\eta^*$ as per~\eqref{optSize} for the same reasons.

These properties imply that $\corrQ{q}{\ve{x}}/\eta^*(\ve{x})$ is constant over $\R^n \setminus \{\ve{x}^*\}.$  We are going to show that for small $q$ this quantity lies strictly between $0$ and $2.$ In other words \begin{equation}\label{convergence_check}0 < c < \corrQ{q}{\ve{x}_k}/\eta^*(\ve{x}_k) < C < 2\end{equation} for all iterates $\ve{x}_k.$ (Of course this bound holds for all $\ve{x}\in\R^n$, but we emphasize that we apply this bound to the iterates.) This will allow us to apply  Proposition \ref{almostOptrate} (in the form of \eqref{almost_rate}) to conclude that QuantileSGD(q) converges for $q$ small enough.

The lower bound of \eqref{convergence_check} clearly holds for $q$ positive, since $\corrQ{q}{\ve{x}_k}/\eta^*(\ve{x}_k)$ is nonzero as long as $\ve{x}_k\neq \ve{x}^*$ (and of course $\eta^*(\ve{x}_k) < \infty$). 

Also recall that for all uncorrupted equations we have $\eta^*(\ve{x}_k) =\EE|\inner{\ve{x}_k - \ve{x}^*}{\ve{a}_k}|$. So, we can lower bound
\begin{align*}
\eta^*(\ve{x}_k) &\ge (1-\beta)\EE(\abs{\inner{\ve{e}_k}{\ve{a}_k}}) + \beta\EE(-\abs{\inner{\ve{e}_k}{\ve{a}_k}}) \\
&= (1-2\beta)\EE(\abs{\inner{\ve{e}_k}{\ve{a}_k}}) \\
&= (1-2\beta)\sqrt{\frac{2}{\pi}} \norm{\ve{e}_k}{},
\end{align*} where the last constant is the expectation of a standard half-normal random variable. 

\rev{By Lemma \ref{residual_vs_emp_quantiles}}, we also have \[\corrQ{q}{\ve{x}_k} \leq \uncorrQ{q+\beta}{\ve{x}_k} = \norm{\ve{e}_k}{}\Phi_{q+\beta},\] where $\Phi_q$ denotes the $q$-quantile of the standard half-normal distribution, $\abs{\mathcal{N}(0,1)}$.
 The upper bound in equation \eqref{convergence_check} is equivalent to the inequality \begin{equation}\label{streaming_convergence_requirement}\norm{\ve{e}_k}{} \Phi_{q+\beta} < C(1-2\beta)\sqrt{\frac{2}{\pi}} \norm{\ve{e}_k}{}, \end{equation} where $C$ is allowed to be any constant smaller than $2$ (e.g $1.99$).  This inequality is true for small positive $q$ as long as \[\uncorrQ{\beta}{\abs{\mathcal{N}(0,1)}} < \sqrt{\frac{8}{\pi}}(1-2\beta).\] One can verify numerically that the inequality holds for $\beta=0.35,$ and indeed for slightly larger values. This concludes the proof of Theorem~\ref{streaming_sgd_convergence}.
\end{proof}

\begin{remark}
One can find explicit pairs $q,\beta$ that work by solving the inequality \eqref{streaming_convergence_requirement} numerically.  For instance quantiles $0.1, 0.3,$ and $0.5$ can handle corruption rates of roughly $0.32, 0.25,$ and $0.18$ respectively.
\end{remark}

\begin{remark}
An adversary generating corruptions at runtime can make the bounds in the proof of \ref{streaming_sgd_convergence} as tight as desired.  Thus one cannot expect convergence in general if $\beta$ is much larger than $0.35.$
\end{remark}

\begin{remark}
While the above analysis gives results that are on the same order of magnitude as experiments show, this setting is far more adversarial than what one would encounter in practice.  Our experiments demonstrate that one can tolerate higher levels of corruptions than what our theory predicts in this setting.  Extending the analysis to the setting of our experiments would require fixing a particular model for the corruptions.  By considering adversarial corruptions generated at run-time, we handle any such model.
\end{remark}





\section{Implementation Considerations}\label{sec:imp}

In this section, we discuss several important considerations regarding the implementation of QuantileRK and QuantileSGD.  In particular, we touch on the streaming setting in which the rows of the measurement matrix are sampled from a distribution and provided in an online manner.  We additionally discuss various considerations for constructing the sample of the residual, and the choice of quantile to apply in each method.  

\subsection{Streaming setting} First, we note that the streaming setting described in Section~\ref{sec:streaming} provides a good model for many of our experiments.  For example, in several of the experiments below, we sample 2000 rows (2000 iterations) from a 50000 row Gaussian matrix.  We expect most rows to be sampled only once, which places us within the context of the streaming setting.  For this reason, we expect that our methods can in practice handle a larger fraction of corruptions than is reflected in Theorems~\ref{median_convergence} and \ref{SGD_convergence}. 

\subsection{Sample size} Next, we mention several approaches for decreasing the computational burden of computing the residual in each iteration of QuantileRK and QuantileSGD.  Note that both QuantileRK and QuantileSGD as written in Methods~\ref{QuantileRK} and \ref{QuantileSGD} use a sample of the residual of size $t$.  This is much more efficient than constructing the entire residual in each iteration, with the cost scaling with $tn$ instead of $mn$ when constructing the entire residual.  

The optimal sample size depends upon the quantile chosen, the fraction of corruptions, and the number of iterations employed.  Given the fraction of corruptions, one should choose the sample size and quantile so that the number of corruptions in the sample is at most $(1-q)t$ with high probability (this could be calculated with a Chernoff bound).  In particular, more aggressive methods with higher choice of quantile demand larger sample size to ensure that corruptions may be avoided with the quantile calculation.  

\subsection{Quantile selection} For QuantileRK, a larger quantile corresponds to a more aggressive method which is more likely to make the sampled projection.  The quantile can be chosen quite close to one if very few corruptions are expected.  Meanwhile, for QuantileSGD, the OptSGD theory demonstrates that the optimal quantile to select is the mean of the uncorrupted residual. In the case of Gaussian rows with no corruptions, the mean happens to coincide with the $0.58$ quantile.  So for QuantileSGD the quantile should be chosen near $0.5$ if few corruptions are expected.


\subsection{Sliding window} Now, as mentioned previously, constructing the sample of the residual requires $\mathcal{O}(tn)$ computation.  We can decrease this per-iteration cost by reusing residual entries between iterations.  This suggests using a `sliding window' approach where the sample from which we compute the quantile consists of residual entries collected over multiple iterations.  We implement this approach in the experiments below, using on the order of several hundred of the most recently computed residuals.  One might expect that this causes significant loss in performance due to the varying scale of the residuals in each iteration, but empirically we see nearly identical performance for moderately sized windows (on the order of 100-500 iterations).  

The sliding window approach raises the question of what to do in the initial iterations before the iteration number has reached the window size.  One could populate the entire window in the first iteration by sampling as many residual entries as the window size, and then just replacing residual entries as new ones are sampled in the next iterations.  Alternatively, one could simply use a partial window until the iteration number reaches the window size.  However, this could significantly slow convergence if there are corruptions that are large relative to the initial error $\|\ve{x}_0 - \ve{x}^*\|$ that get sampled in these initial iterations.

\section{Experimental Results}\label{sec:experiment}

Each experiment is run using using Python version 3.6.9 on a single 24-core machine.  

\subsection{Comparing various quantiles}

Our theoretical analysis does not provide specific guidance for choice of quantiles (besides rough relationships between $q$ and $\beta$), so we investigate the problem of choosing quantiles empirically. Figure~\ref{fig:various_quantiles_rk_sgd} shows the behaviors of QuantileRK and QuantileSGD for various corruption rates $\beta$ and choices of quantile $q.$  For each $\beta,$ we plot the log relative error after $2000$ iterations as a function of $q$ (this is the quantity $\log(\norm{\ve{x}_{2000}-\ve{x}^*}{}/\norm{\ve{x}_0-\ve{x}^*}{})$. In order to de-noise the plots, each plotted point is the median over $10$ trials. On each trial we generate a new $50000\times 100$ Gaussian system with a $\beta$ fraction of corrupted entries.  A corrupted entry of $\ve{b}$ is modified by adding a uniformly random value in $[-5,5].$

It is interesting to point out the optimal quantiles for various corruption rates.   In the case of QuantileRK, we see that the optimal quantile tends to be just shy of $1-\beta.$  This aligns with the intuition that QuantileRK should be as aggressive as possible while avoiding projections onto badly corrupted hyperplanes.  It is clear that QuantileRK cannot choose a quantile larger than $\beta$, otherwise we are likely to sample in the $\beta$ fraction of corrupted rows, resulting in a threshold which is too large.  In practice it is often best to choose a quantile which is somewhat smaller that what the graph suggests.  As the quantile approaches $1-\beta$ the risk of performing a bad projection becomes large enough that we observe bad projections within a few thousand iterations.

We see that QuantileSGD is much more robust to the choice of quantile.  For instance when $\beta = 0.1,$ the optimal quantile appears to be near $0.5.$  However we see near-optimal convergence behavior as long as $\beta$ is between $0.3$ and $0.7.$
%

%

\begin{figure}
		\centering
	\subfloat[QuantileRK]{\includegraphics[width=0.45\columnwidth]{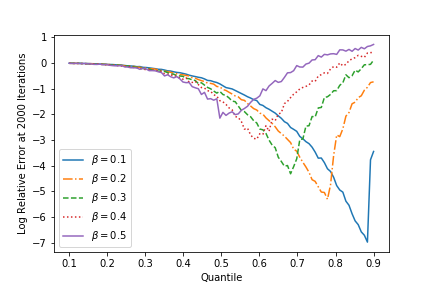}}	\qquad
	\subfloat[QuantileSGD]{\includegraphics[width=0.45\columnwidth]{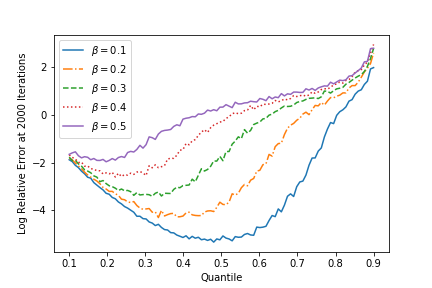}}
	\caption{$\log(\norm{\ve{x}_{2000}-\ve{x}^*}{}/\norm{\ve{x}_0 - \ve{x}^*}{})$ for (a) QuantileRK and (b) QuantileSGD run on $50000\times 100$ Gaussian system, with various corruption rates $\beta$ and quantile choices.}
	\label{fig:various_quantiles_rk_sgd}
\end{figure}

%

\subsection{General convergence plots}
\begin{figure}
		\centering
	\subfloat[Gaussian model]{\includegraphics[width=0.45\columnwidth]{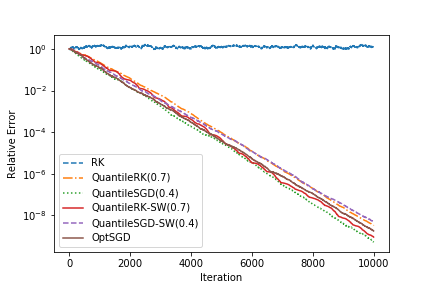}}	\qquad
	\subfloat[Coherent model]{\includegraphics[width=0.45\columnwidth]{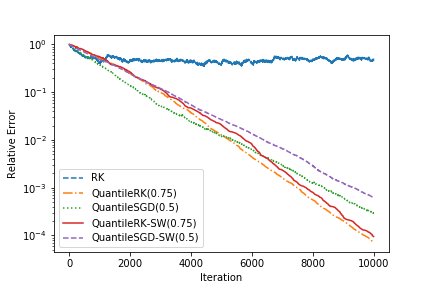}}
	\caption{Relative error as a function of iteration count plotted for a $50000\times 100$ Gaussian and coherent model with a $0.2$ corruption rate.  The coherent system was generated by sampling entries uniformly in $[0,1)$ and then normalizing the rows of the resulting matrix.}
	\label{fig:gaussian_coherent}
\end{figure}

\begin{figure}
		\centering
	\subfloat[Bernoulli model]{\includegraphics[width=0.45\columnwidth]{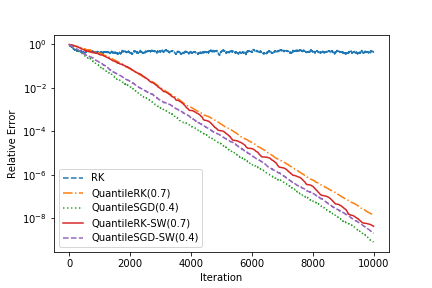}}	\qquad
	\subfloat[Adversarial model]{\includegraphics[width=0.45\columnwidth]{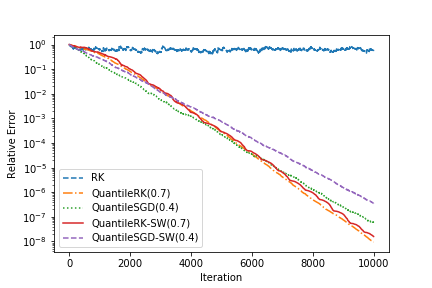}}
	\caption{Relative error as a function of iteration count plotted for a $50000\times 100$ Bernoulli and adversarial model with a $0.2$ corruption rate. Each entry of the Bernoulli matrix is generated to be $-1$ or $1$ before normalizing rows. For the adversarial model, a random subset of rows from the corresponding Gaussian system were selected and corrupted to yield a $0.2m$ sized consistent subsystem.}
    \label{fig:bernoulli_adversarial}
\end{figure}

In Figure \ref{fig:gaussian_coherent} and Figure \ref{fig:bernoulli_adversarial} we show the convergence behavior of our methods on a $50000 \times 100$ system with a $\beta=0.2$ fraction of corruptions. In Figure \ref{fig:gaussian_coherent} and Figure \ref{fig:bernoulli_adversarial} (a) entries are corrupted by adding a uniformly random value in $[-5,5].$

The label ``RK" signifies the standard Randomized Kaczmarz method without thresholding.  The methods marked QuantileRK-SW and QuantileSGD-SW are the ``sliding window" versions of QuantileRK and QuantileSGD.  The methods marked QuantileRK and QuantileSGD are the sampled variants.  We set our window size and sample size to $400$ for these experiments.  Finally, we include OptSGD only in Figure \ref{fig:gaussian_coherent} (a).

In Figure \ref{fig:gaussian_coherent} (a) we show a normalized Gaussian system (i.e., a system with rows sampled uniformly over $S^{n-1}$). We observe that all four of our quantile methods exhibit similar convergence behavior.  Notably, these methods perform comparably to OptSGD, which chooses an optimal step size at each iteration.  (Of course OptSGD cannot be run in practical settings, as it requires knowledge of $\ve{x}^*$.)

In Figure \ref{fig:gaussian_coherent} (b) we consider a system with ``coherent rows".  This matrix is created by generating each entry i.i.d.\ uniformly in $[0,1],$ and then normalizing the rows of the resulting matrix.  We call the system coherent because pairs of rows typically have large inner product with one another.  Such a matrix does not have isotropic rows, and is therefore not covered by our theoretical analysis.  Nonetheless, we do observe convergence, albeit at a slower rate than for the Gaussian model. 

In Figure \ref{fig:bernoulli_adversarial} (a) we show a Bernoulli system.  Here each entry of our matrix is sampled uniformly in $\{-1,1\}$ and the rows are normalized.  This matrix violates the ``bounded density" assumption of our theoretical analysis.  However we still see convergence behavior which is comparable to the Gaussian case.

Figure \ref{fig:bernoulli_adversarial} (b) shows a Gaussian system which is corrupted adversarially.  In this model, we choose a random collection of indices to corrupt.  The corruptions are then chosen so that the corrupted subsystem is consistent.  This model is adversarial in the sense that it tries to ``trick" the method into believing that there is another solution in addition to $\ve{x}^*$. (Note that this is different from the truly adversarial corruptions discussed in the context of the streaming setting.)  Our theory does address this case, and here we see convergence is comparable to the randomly corrupted Gaussian case.

%

\begin{figure}
		\centering
	\subfloat[Effect of aspect ratio]{\includegraphics[width=0.45\columnwidth]{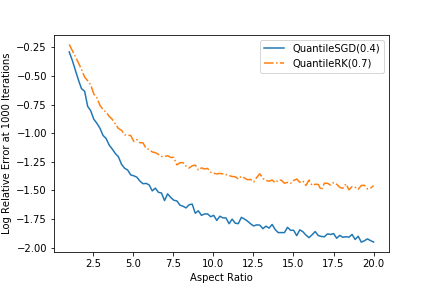}}	\qquad
	\subfloat[Effect of corruption size]{\includegraphics[width=0.45\columnwidth]{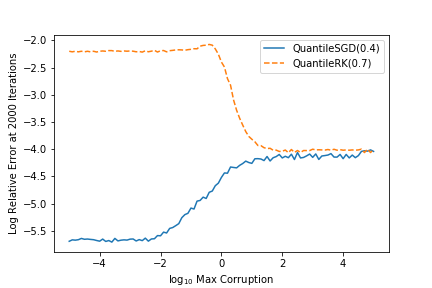}}
	\caption{(a) Log relative error for QuantileSGD and QuantileRK after $1000$ iterations on a $100a\times 100$ Gaussian system with a $0.2$ corruption rate, where $a=m/n$ is the aspect ratio of the matrix. (b) Log relative error for QuantileSGD and QuantileRK after $2000$ iterations, as a function of corruption size.  We use a $50000\times 100$ Gaussian system and corrupt our system by adding a uniform value in $[-10^x, 10^x]$. }
	\label{fig:aspect_error_sizes}
\end{figure}

\subsection{Influence of the aspect ratio}
Each of our experiments so far dealt with extremely tall $50000\times 100$ matrices.  Since we ran at most $10000$ iterations we were unlikely to sample a given row many times.  Thus our experiments have effectively been run in the streaming setting.  A strength of our theory was providing convergence guarantees even for matrices which are not too tall. In Figure \ref{fig:aspect_error_sizes} (a) we show the convergence behavior of QuantileSGD and QuantileRK as a function of the aspect ratio.  In this plot we consider random Gaussian matrices with a $\beta=0.2$ fraction of corruptions which are $100a\times 100,$ where $a$ is the aspect ratio. Each data point is the median error taken over $100$ separate trials.

%

\subsection{Effect of corruption size}
In Figure \ref{fig:aspect_error_sizes} (b) we illustrate the behaviors of QuantileSGD and QuantileRK as the corruption sizes are varied.  For each value on the $x$-axis, $x$, we corrupt the vector $\ve{b}$ by adding values sampled uniformly from $[-10^{x}, 10^{x}]$ to a $\beta=0.2$ fraction of entries.  As we see, both of our methods still converge well even when the corruption sizes are very large.  Their behavior for very small errors is perhaps surprising.

In particular, QuantileRK seems to perform better when the corruptions are very large.\footnote{This type of behavior was noted in \cite{haddock2018randomized}, although for different reasons.}  The reason for this is that when the corruptions are very small relative to $\norm{\ve{x}_k - \ve{x}^*}{},$ the system behaves as though it is consistent.  For a consistent system QuantileRK behaves too conservatively by rejecting $30$ percent of the rows.  When the size of corruptions becomes comparable to or larger than $\norm{\ve{x}_0 - \ve{x}^*}{},$ this behavior disappears.

QuantileSGD on the other hand behaves better for consistent systems as rows are never rejected.  The more consistent the system, the more likely a given step is to move the iterate closer to $\ve{x}^*.$  We see this behavior for QuantileRK and QuantileSGD in Figure \ref{fig:various_quantiles_rk_sgd} as well.

\subsection{Real world data}

\begin{figure}
		\centering
	\subfloat[Tomography system]{\includegraphics[width=0.45\columnwidth]{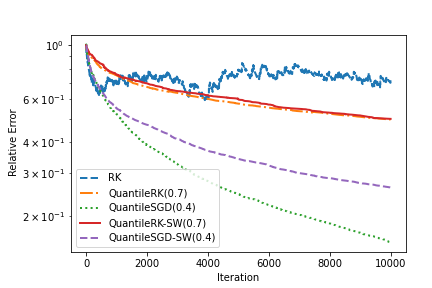}}	\qquad
	\subfloat[Wisconsin Breast Cancer dataset]{\includegraphics[width=0.45\columnwidth]{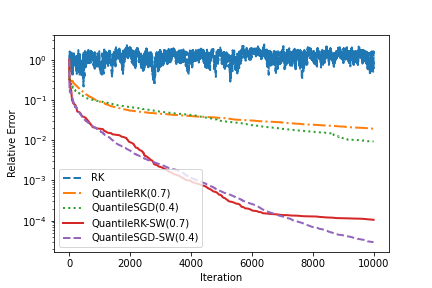}}
	\caption{(a) Relative error for each method run on a $1200\times 400$ system designed for tomography.  Corruptions were added to $100$ uniformly random entries of $\ve{b}.$ (b) Relative error for each method run on a $699\times 10$ matrix obtained from the Wisconsin Breast Cancer dataset. Corruptions were added to $100$ uniformly random entries of $\ve{b}.$}
	\label{fig:tomography_wisconsin}
\end{figure}

%

%
%

Finally, in Figure \ref{fig:tomography_wisconsin} we illustrate our methods on two real world data sets.  In Figure \ref{fig:tomography_wisconsin} (a), we experiment on a tomography problem generated using the Matlab
Regularization Toolbox by P.C. Hansen (\url{http://www.imm.dtu.dk/~pcha/Regutools/})
\cite{hansen2007regularization}. We present a 2D tomography problem $\ve{A}\ve{x} = \ve{b}$ for an $m \times n$ matrix with $m = f N^2$ and $n = N^2$. Here $\ve{A}$ corresponds to the absorption along a random line through an $N \times N$ grid. In this experiment, we set $N = 20$ and the oversampling factor $f = 3$, which yields a matrix $\ve{A} \in \mathbb{R}^{1200 \times 400}$.  As the resulting system was consistent, we randomly sampled $100$ indices uniformly from among the rows of $\ve{A}$ and corrupted the right-hand side vector $\ve{b}$ in these entries by adding a uniformly random value in $[-5,5]$.  

In Figure \ref{fig:tomography_wisconsin} (b) we use a corrupted system generated from the Wisconsin (Diagnostic) Breast Cancer data set, which includes data points whose features are computed from a digitized image of a fine needle aspirate (FNA) of a breast mass and describe characteristics of the cell nuclei present in the
image \cite{UCI}.  This collection of data points forms our matrix $\ve{A} \in \mathbb{R}^{699 \times 10}$, we construct $\ve{b}$ to form a consistent system, and then corrupt a random selection of $100$ entries of the right-hand side by adding a uniformly random value in $[-5,5]$.  

The label ``RK" signifies the standard Randomized Kaczmarz method without thresholding.  The methods marked QuantileRK-SW and QuantileSGD-SW are the ``sliding window" versions of QuantileRK and QuantileSGD.  The methods marked QuantileRK and QuantileSGD are the sampled variants.  We set our window size and sample size to $100$ for these experiments.  
Again, all four of our proposed methods converge; however, the difference in empirical convergence rate is clearly discernible on this data.  \rev{It is notable that in Figure \ref{fig:tomography_wisconsin} (b) the sliding window variants of the method converge more quickly. Since the sliding window quantile estimate lags into the past where residual entries had larger magnitude, it will typically yield a larger quantile than resampling on each iteration. The effect is to allow for more aggressive projections and step sizes in Quantile-RK and Quantile-SGD.}

\section{Conclusion}\label{sec:conclusion}

In this work, we propose two new methods, QuantileRK and QuantileSGD, for solving large-scale systems of equations which are inconsistent due to sparse, arbitrarily large corruptions in the measurement vector.  Such corrupted systems of equations arise in practice in many applications, but are especially abundant and challenging in areas such as distributed computing, internet of things, and other network problems facing potentially adversarial corruption.

The QuantileRK and QuantileSGD methods make use of a quantile statistic of a sample of the residual in each iteration.  We prove that each method enjoys exponential convergence with mild assumptions on the distribution of the entries of the measurement matrix $\ve{A}$, the quantile parameter of the method $q$, and the fraction of corruptions $\beta$.  

Our experiments support these theoretical results, as well as illustrate that the methods converge in many scenarios not captured by our theoretically required assumptions.  In particular, these methods are able to handle fractions of corruption larger than those predicted theoretically, and converge for systems defined by structured and real measurement matrices which are far from the random matrices for which our theoretical results hold.  We note that both theoretically and experimentally we see that the magnitude of the corruptions do not negatively impact convergence.


\rev{While our experiments show that QuantileRK and QuantileSGD yield good results on many types of corrupted systems, our theory is currently limited to near-Gaussian random matrix model.  One could hope to extend the theory in several directions.}

\rev{First, it would be nice to show a convergence result in the Bernoulli random model. The main obstacle is that we can no longer hope for a per-iteration guarantee that holds over all potential iterates. One would need to show that the set of points at which the per-iteration guarantee fails is small, and that the dynamics of our algorithms are unlikely to be biased towards these ``bad" points.}

\rev{Second, one could also hope to give a non-random characterization of matrices for which our algorithms have good convergence properties.  To handle adversarial corruptions it is probably necessary to assume some type of incoherence. Otherwise the corruptions could be structured to align in a particular direction which points away from $x^*.$  Alternatively, is it possible to detect coherent row subsets of $A$ in order to preempt the effect of structured corruptions?}

\rev{Finally, one might also consider a non-random model for $A$, but where the corrupted entries of $b$ are non-random.  In this setting it seems reasonable that the theory should continue to hold for structured $A$.
One could also attempt to generalize our results to systems of inequalities, and to partially-greedy row sampling schemes. 
}




\bibliographystyle{amsalpha}
\bibliography{revised_main}


\end{document}